\documentclass[11pt,a4paper,reqno]{amsart}
\usepackage{fullpage}
\usepackage{amsmath,amssymb,amsthm,mathtools,mathrsfs}
\usepackage[colorlinks=true,linkcolor=blue,citecolor=blue,urlcolor=blue]{hyperref}
\usepackage[nameinlink,noabbrev]{cleveref}

\hypersetup{
pdftitle={Scattering Across the Long-Range Threshold for One-Dimensional Nonlinear Schr\"odinger Equations},
pdfauthor={Yonggeun Cho and Jinyeop Lee},
pdfsubject={Uniform scattering asymptotics across the long-range threshold for one-dimensional nonlinear Schr\"odinger equations},
pdfkeywords={nonlinear Schr\"odinger equation, modified scattering, long-range scattering, scattering transition, asymptotic profile}
}

\numberwithin{equation}{section}
\crefname{equation}{}{}
\newtheorem{theorem}{Theorem}[section]
\newtheorem{proposition}[theorem]{Proposition}
\newtheorem{lemma}[theorem]{Lemma}
\newtheorem{corollary}[theorem]{Corollary}

\theoremstyle{remark}
\newtheorem{remark}[theorem]{Remark}
\crefname{theorem}{Theorem}{Theorems}
\crefname{proposition}{Proposition}{Propositions}
\crefname{lemma}{Lemma}{Lemmas}
\crefname{corollary}{Corollary}{Corollaries}
\crefname{definition}{Definition}{Definitions}
\crefname{remark}{Remark}{Remarks}

\newcommand{\R}{\mathbb R}
\newcommand{\C}{\mathbb C}
\newcommand{\ii}{\mathrm i}
\newcommand{\ee}{\mathrm e}
\newcommand{\dd}{\,\mathrm d}
\newcommand{\norm}[1]{\left\lVert #1\right\rVert}
\newcommand{\snorm}[1]{\lVert #1\rVert}
\newcommand{\abs}[1]{\left\lvert #1\right\rvert}

\title{Scattering Across the Long-Range Threshold\\
for One-Dimensional Nonlinear Schr\"odinger Equations}
\author{Yonggeun Cho}
\address[Y. Cho]{Department of Mathematics and Institute of Pure and Applied Mathematics, Jeonbuk National University, Jeonju 54896, Republic of Korea}
\email{\href{mailto:changocho@jbnu.ac.kr}{changocho@jbnu.ac.kr}}
\author{Jinyeop Lee}
\address[J. Lee]{Department of Applied Mathematics, Kyung Hee University, 1732 Deogyeong-daero, Giheung-gu, Yongin-si, Gyeonggi-do, South Korea}
\email{\href{mailto:jinyeop.lee@khu.ac.kr}{jinyeop.lee@khu.ac.kr}}
\date{}
\subjclass[2020]{Primary 35Q55, Secondary 35B40, 35P25}
\keywords{Nonlinear Schr\"odinger equation, modified scattering, long-range scattering, scattering transition, asymptotic profile}

\begin{document}

\begin{abstract}
	We study small solutions to the one-dimensional nonlinear Schr\"odinger family
	\[
	\ii\partial_tu_\delta=-\partial_x^2u_\delta+\kappa\abs{u_\delta}^{2+\delta}u_\delta,
	\qquad 0\leq\delta\leq\delta_0,
	\]
	uniformly as $\delta\to0^+$ and $t\to\infty$, across the threshold between cubic modified scattering and nearby-power ordinary scattering. For initial data small in $H^{0,1}(\R)$, with one spatial weight and no physical derivative assumed, we prove a joint-limit asymptotic formula governed by an explicit nonlinear clock. The variable $ \delta\log t$ gives three regimes according as it tends to zero, a positive finite limit, or infinity. Within the regime $\delta\log t\to0$, successive Taylor terms of the clock become visible at an infinite hierarchy of time scales,
	each requiring finer absolute phase accuracy. These scales accumulate at the transition scale, where the exact clock describes them together. In the transition and saturated regimes, the clock is of order $\delta^{-1}$, so the first-order variation of its coefficient contributes a finite phase correction.
\end{abstract}

\maketitle

\section{Introduction and history}
\label{sec:introduction-history}

We consider the Cauchy problem
\begin{equation}\label{eq:nls-delta}
	\begin{cases}
		\displaystyle
		\ii\partial_tu_\delta=-\partial_x^2u_\delta+\kappa\abs{u_\delta}^{2+\delta}u_\delta,\\[0.4em]
		u_\delta(0)=u_{\mathrm{in}},
	\end{cases}
\end{equation}
where $\kappa\in\{+1,-1\}$, $0\leq\delta\leq\delta_0$, and the initial datum is independent of $\delta$.

The fixed-parameter cases of \eqref{eq:nls-delta} belong to well-developed but qualitatively different scattering theories.  At $\delta=0$, the one-dimensional cubic equation is long range: the leading interaction accumulates logarithmically, and small localized solutions exhibit modified rather than ordinary scattering.  Modified wave operators, sharp decay, asymptotic completeness, and higher-order asymptotics were developed in \cite{Ozawa1991,HayashiNaumkin1998AJM,HayashiKaikinaNaumkin1998,KitaWada2002,LindbladSoffer2006,KatoPusateri2011}. Ifrim--Tataru proved global bounds and modified asymptotics for small data in $H^{0,1}$, with no physical derivative assumed, and also treated short-range modifications~\cite{IfrimTataru2015}.  Cubic modified scattering at this weighted regularity is therefore not the novelty here.

For every fixed $\delta>0$, the power is instead short range: linear decay $\abs{u(t)}\sim t^{-1/2}$ makes the leading nonlinear interaction of size $t^{-1-\delta/2}$, which is integrable.  The corresponding ordinary scattering and wave-operator theory is classical. See \cite{GinibreVelo1985,Cazenave2003,MoriyamaTonegawaTsutsumi2003}.  The problem of this paper is the singular joint limit
\[
	\delta\to0^+,
	\qquad
	t\to\infty.
\]
Fixed-parameter results alone do not give a common smallness threshold down to $\delta=0$, profiles constructed uniformly in $\delta$, or the first-order expansion in the exponent needed when the nonlinear phase has size $\delta^{-1}$.
Our recent work on the Yukawa--Coulomb Hartree equation provides a related motivation~\cite{ChoLee2026}: there the transition variable is $\mu t$, whereas here it is $\delta\log t$.  The singular prefactor $1/\delta$ in the present clock creates an additional hierarchy with no counterpart in that comparison.

The clock is already determined from the decay calculation above. The associated nonlinear clock is
\begin{equation}\label{eq:phase-clock-preview}
	P_\delta(t)
	:=
	\begin{cases}
		\displaystyle \frac{2}{\delta}\bigl(1-t^{-\delta/2}\bigr),&\delta>0,\\[0.8em]
		\log t,&\delta=0.
	\end{cases}
\end{equation}
For $\delta>0$, write
\[
	\lambda:=\delta\log t,
	\qquad
	P_\delta(t)=\frac{2(1-\ee^{-\lambda/2})}{\delta}.
\]
The transition scale is reached when $\lambda$ is of order one, equivalently
at physical times $t=\exp(O(1)/\delta)$.

The singular factor $1/\delta$ also reveals finer structure before that transition. Although $\delta\log t\to0$ gives $P_\delta(t)/\log t\to1$, the difference $P_\delta(t)-\log t$ need not tend to zero. A small relative clock error can therefore remain visible in the oscillatory phase. Expanding in $\delta$ gives
\[
	P_\delta(t)
	=
	\sum_{k=0}^{\infty}
	\frac{(-1)^k}{2^k(k+1)!}\,
	\delta^k(\log t)^{k+1}.
\]
The $m$-th correction becomes visible in absolute phase when $\delta^m(\log t)^{m+1}$ is of order one, hence when $\log t$ is of order $\delta^{-m/(m+1)}$.  In physical-time scale notation, this gives
\begin{equation}\label{eq:cubic-hierarchy-scales-preview}
	1
	\ll
	\ee^{\delta^{-1/2}}
	\ll
	\ee^{\delta^{-2/3}}
	\ll
	\ee^{\delta^{-3/4}}
	\ll
	\cdots
	\ll
	\ee^{\delta^{-m/(m+1)}}
	\ll
	\cdots
	\ll
	\ee^{O(1)/\delta}.
\end{equation}
Here \eqref{eq:cubic-hierarchy-scales-preview} is informal scale notation. The rigorous sequence conditions appear in \Cref{thm:main-time-scale-hierarchy}. The finite-$m$ clocks are cumulative Taylor truncations, not mutually exclusive scattering regimes.  They all lie inside $\delta\log t\to0$ and accumulate as $m/(m+1)\uparrow1$, where the exact clock includes all Taylor terms.
The three regimes remain
\[
\delta\log t\to0,
\qquad
\delta\log t\to\Lambda\in(0,\infty),
\qquad
\delta\log t\to\infty.
\]

The main results, \Cref{thm:main-time-scale-hierarchy,thm:exact_clock}, give physical-space asymptotics for this joint limit with small $H^{0,1}$ initial data. The first theorem describes the Taylor hierarchy and the scales at which successive terms contribute a finite phase. The second retains the exact clock and applies across all three regimes. Here $W_0$ is the cubic asymptotic profile, $Z_0$ is the limiting profile obtained after removing the evolving nonlinear phase, and $\abs{W_0}=\abs{Z_0}$. Throughout the paper, a dot denotes the first right derivative with respect to the exponent at $\delta=0$, in the topology specified in each statement. Along any joint sequence $\delta_n\to0^+$, $t_n\to\infty$, with $\lambda_n=\delta_n\log t_n$, \Cref{thm:exact_clock} gives
\[
\begin{aligned}
	u_{\delta_n}(t_n,x)
	=&(2t_n)^{-1/2}\ee^{\frac{\ii x^2}{4t_n}}W_0(x/2t_n)\\
	&\times\exp\Biggl\{-\ii\kappa\Biggl[
	\frac12P_{\delta_n}(t_n)\abs{W_0(x/2t_n)}^2\\
	&\hspace{5em}+(1-\ee^{-\lambda_n/2})\Bigl[
	2\operatorname{Re}\bigl(\overline{Z_0(x/2t_n)}\dot Z_0(x/2t_n)\bigr)\\
	&\hspace{7em}+\abs{Z_0(x/2t_n)}^2
	\left(\log\abs{Z_0(x/2t_n)}-\frac12\log2\right)
	\Bigr]\Biggr]\Biggr\}
	+o_{L_x^2}(1).
\end{aligned}
\]
The coefficient of the clock has the first-order expansion
\[
2^{-1-\delta/2}\abs{W_\delta}^{2+\delta}
=\frac12\abs{W_0}^2
+\frac{\delta}{2}\left[
2\operatorname{Re}(\overline{Z_0}\dot Z_0)
+\abs{Z_0}^2\left(\log\abs{Z_0}-\frac12\log2\right)
\right]+o_{L_v^2}(\delta).
\]
Logarithmic products are defined by continuous extension at zero. Since the clock is of order $\delta^{-1}$ in the transition and saturated regimes, this first-order variation contributes the finite phase correction shown explicitly above. The Taylor hierarchy instead comes from expanding the clock multiplying $\abs{W_0}^2/2$. The leading clock is retained in \Cref{thm:exact_clock}; replacing it by $2(1-\ee^{-\Lambda/2})/\delta$ or $2/\delta$ requires the stronger assumptions in \Cref{cor:simplified-clock-asymptotics}.

\subsection{Analytic ingredients and organization}
\label{subsec:introduction-strategy}

The analytic problem is to make this picture uniform down to $\delta=0$ from initial data in $H^{0,1}$, with no physical derivative assumed.  Endpoint Strichartz estimates and mass conservation give the mass-class construction, while the Galilean field propagates the spatial weight. In Fourier variables, this weight controls one derivative of the interaction profile. Conjugation by the small Schr\"odinger propagator separates the leading nonlinear phase from an integrable remainder. A coupled bootstrap then gives uniform decay and a limiting profile. Parameter quotients identify the first-order expansion of the asymptotic profile and the coefficient of the clock. Finally, the clock identities and Taylor remainder bounds turn these analytic estimates into the joint-limit formulas.

Sections~\ref{sec:uniform-preliminary-estimates}--\ref{sec:rescaled-amplitude-exact-phase} establish the uniform profile asymptotics, \Cref{sec:quantitative-profile-continuity} proves the first-order expansion in the exponent, and \Cref{sec:transition-phase-analysis} analyzes the exact clock, Taylor hierarchy, three regimes, and refinements.  \Cref{sec:proof-main-theorem} then assembles the two physical-space theorems.

\section{Main results}
\label{sec:main-results}

\subsection{Notation and function spaces}
\label{subsec:notation-function-spaces}

We use the weighted mass space
\begin{equation}\label{eq:H01-definition}
	H^{0,1}(\R)
	:=
	\left\{u\in L^2(\R): x u\in L^2(\R)\right\},
	\qquad
	\norm{u}_{H^{0,1}}
	:=
	\norm{u}_{L^2}+\snorm{ x u}_{L^2}.
\end{equation}
No physical derivative is included in $H^{0,1}$. Its spatial weight becomes a derivative only after passage to the interaction-profile variable.

For $0\leq\delta\leq\delta_0$ and $t\geq1$, define
\begin{equation}\label{eq:Pdelta-definition}
	P_\delta(t):=
	\begin{cases}
		\displaystyle\frac{2}{\delta}\bigl(1-t^{-\delta/2}\bigr),&\delta>0,\\[0.8em]
		\log t,&\delta=0.
	\end{cases}
\end{equation}
For an integer $m\geq0$ and $\delta>0$, define the $m$-th truncated clock by
\begin{equation}\label{eq:Pdelta-m-definition}
	P_\delta^{(m)}(t)
	:=
	\sum_{k=0}^{m}
	\frac{(-1)^k}{2^k(k+1)!}\,
	\delta^k(\log t)^{k+1}.
\end{equation}
For a profile $G$, define the frozen nonlinear phase
\begin{equation}\label{eq:frozen-phase-definition}
	\Phi_\delta[G](t,v):=\kappa 2^{-1-\delta/2} \abs{G(v)}^{2+\delta}P_\delta(t).
\end{equation}
We write $o_{L^p}(1)$ for a quantity converging to zero in the indicated $L^p$ space. The underlying variable is displayed when needed.

For $t\geq1$, define the self-similar amplitude by
\begin{equation}\label{eq:self-similar-amplitude-definition}
	u_\delta(t,x)=(2t)^{-1/2}\ee^{\frac{\ii x^2}{4t}}
	a_\delta\left(t,\frac{x}{2t}\right).
\end{equation}
This map is unitary from $L_v^2$ to $L_x^2$.

\subsection{The nonlinear clock}
\label{subsec:exact-phase-clock}

The following proposition is purely algebraic and does not use the PDE theory.

\begin{proposition}
	\label{prop:phase-clock-trichotomy}
	Let $\delta_n\to0$ and $t_n\to\infty$.  If $\delta_n\log t_n\to0$, then
	\begin{equation}\label{eq:clock-regime-one}
		P_{\delta_n}(t_n)\sim\log t_n.
	\end{equation}
	If $\delta_n\log t_n\to\Lambda\in(0,\infty)$, then
	\begin{equation}\label{eq:clock-regime-two}
		P_{\delta_n}(t_n)
		\sim
		\frac{2(1-\ee^{-\Lambda/2})}{\Lambda}\log t_n.
	\end{equation}
	If $\delta_n\log t_n\to\infty$, then
	\begin{equation}\label{eq:clock-regime-three}
		P_{\delta_n}(t_n)\sim\frac{2}{\delta_n}.
	\end{equation}
\end{proposition}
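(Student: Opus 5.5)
The whole statement reduces to a single identity plus elementary facts about one function of one variable. For $\delta_n>0$, put $\lambda_n:=\delta_n\log t_n$ and write the clock as
\[
P_{\delta_n}(t_n)=\frac{2(1-\ee^{-\lambda_n/2})}{\delta_n}
=\log t_n\cdot g(\lambda_n),
\qquad
g(\lambda):=\frac{2(1-\ee^{-\lambda/2})}{\lambda},
\]
using $1/\delta_n=\log t_n/\lambda_n$ (valid since $t_n>1$ eventually, so $\lambda_n>0$). If some $\delta_n=0$, then $P_{\delta_n}(t_n)=\log t_n$ by definition. This matches the extension $g(0):=1$, so the formula $P_{\delta_n}(t_n)=g(\lambda_n)\log t_n$ holds for every $n$.

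The function $g$ is continuous on $[0,\infty)$, positive, and satisfies $g(\lambda)\to1$ as $\lambda\to0^+$. This follows from $1-\ee^{-\lambda/2}=\lambda/2+O(\lambda^2)$, or equivalently from the Taylor series already displayed in the introduction. The three regimes then follow directly.

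\textbf{First regime.} If $\lambda_n\to0$, then $g(\lambda_n)\to1$, which gives \eqref{eq:clock-regime-one}.

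\textbf{Second regime.} If $\lambda_n\to\Lambda\in(0,\infty)$, continuity gives $g(\lambda_n)\to g(\Lambda)=2(1-\ee^{-\Lambda/2})/\Lambda$. This limit is strictly positive, so the ratio $P_{\delta_n}(t_n)\big/\bigl(g(\Lambda)\log t_n\bigr)\to1$, which is \eqref{eq:clock-regime-two}.

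\textbf{Third regime.} If $\lambda_n\to\infty$, I will not use $g$ and will instead compare directly with $2/\delta_n$:
\[
\frac{P_{\delta_n}(t_n)}{2/\delta_n}=1-\ee^{-\lambda_n/2}\to1,
\]
which is \eqref{eq:clock-regime-three}.

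No step is a genuine obstacle. The only point needing care is bookkeeping: $\sim$ means the ratio tends to $1$, so in the second regime the limiting constant must be nonzero, which it is. Also, the possibility $\delta_n=0$ for some $n$ must be covered, which the convention $g(0)=1$ handles; in that case $\lambda_n=0$, which is compatible with the first regime and only finitely often relevant in the other two.
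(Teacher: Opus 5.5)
Your proposal is correct and follows the same route as the paper: rewrite $P_{\delta_n}(t_n)=\frac{2(1-\ee^{-\lambda_n/2})}{\lambda_n}\log t_n$ with $\lambda_n=\delta_n\log t_n$, then let $\lambda_n$ tend to $0$, $\Lambda$, and $+\infty$. Your extra bookkeeping (the convention $g(0)=1$ for $\delta_n=0$, and the nonvanishing of the limit constant in the second regime) only makes explicit what the paper leaves implicit.
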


\begin{proof}
	For $\delta>0$, setting $\lambda=\delta\log t$ gives
	\[
		P_\delta(t)=\frac{2}{\delta}(1-\ee^{-\lambda/2})
		=\frac{2(1-\ee^{-\lambda/2})}{\lambda}\;\log t\;.
	\]
	The conclusions follow by letting $\lambda$ tend respectively to $0$, $\Lambda$, and $+\infty$.
\end{proof}

\begin{remark}
	\label{rem:relative-absolute-phase}
	The conclusions of \Cref{prop:phase-clock-trichotomy} are relative leading-order statements.  When $\delta\log t$ is small,
	\begin{equation}\label{eq:Pdelta-small-lambda-expansion}
		P_\delta(t)=\log t-\frac{\delta}{4}(\log t)^2+O\bigl(\delta^2(\log t)^3\bigr).
	\end{equation}
	More generally, the entire expansion is
	\begin{equation}\label{eq:Pdelta-full-cubic-expansion}
		P_\delta(t)
		=
		\sum_{k=0}^{\infty}
		\frac{(-1)^k}{2^k(k+1)!}\,
		\delta^k(\log t)^{k+1}.
	\end{equation}
	Thus $\delta\log t\to0$ does not imply $P_\delta(t)-\log t=o(1)$.  The sufficient condition $\delta(\log t)^2\to0$ is the $m=0$ case of the general truncation condition
	\[
	\delta^{m+1}(\log t)^{m+2}\to0
	\]
	for replacing $P_\delta$ by $P_\delta^{(m)}$ with absolute $o(1)$ error.  This hierarchy is stated in physical space in \Cref{thm:main-time-scale-hierarchy} and proved from an elementary remainder estimate in \Cref{subsec:cubic-side-hierarchy}.
\end{remark}

\begin{remark}
	\label{rem:absolute-saturation}
	For $\delta>0$,
	\begin{equation}\label{eq:saturated-clock-error}
		\frac{2}{\delta}-P_\delta(t)=\frac{2}{\delta}t^{-\delta/2}.
	\end{equation}
	Hence an absolute $o(1)$ replacement of $P_\delta(t)$ by $2/\delta$ requires $t^{-\delta/2}/\delta\to0$, which is stronger than $\delta\log t\to\infty$.
\end{remark}

\subsection{Main physical-space theorems}
\label{subsec:main-physical-hierarchy}

\begin{theorem}
	\label{thm:main-time-scale-hierarchy}
	There exist $0<\delta_0<1$ and $\varepsilon_0>0$ such that the following holds.  Assume
	\[
	\norm{u_{\mathrm{in}}}_{H^{0,1}}
	\leq
	\varepsilon
	\leq
	\varepsilon_0,
	\]
	and, for $0\leq\delta\leq\delta_0$, let
	\[
	u_\delta\in C(\R;L_x^2)\cap L^4_{t,\mathrm{loc}}(\R;L_x^\infty)
	\]
	be the unique global mass-class solution of \eqref{eq:nls-delta}.  There exists
	$W_0\in H_v^{3/4}$ such that the following hold.
	
	Let $\delta_n\to0^+$ and $t_n\to\infty$. Fix an integer $m\geq0$. If
	\begin{equation}\label{eq:main-hierarchy-assumptions}
		\delta_n^{m+1}(\log t_n)^{m+2}\longrightarrow0,
	\end{equation}
	then
	\begin{equation}\label{eq:main-cubic-hierarchy-clock}
		P_{\delta_n}(t_n)-P_{\delta_n}^{(m)}(t_n)
		\longrightarrow0
	\end{equation}
	and
	\begin{equation}\label{eq:main-cubic-hierarchy-physical}
		\begin{aligned}
			u_{\delta_n}(t_n,x)
			=&
			(2t_n)^{-1/2}
			\ee^{\frac{\ii x^2}{4t_n}}
			\ee^{-\ii\kappa
				P_{\delta_n}^{(m)}(t_n)
				\frac12\abs{W_0({x/2t_n})}^2}
			W_0\left({x/2t_n}\right)
			+
			o_{L_x^2}(1).
		\end{aligned}
	\end{equation}
	For $m\geq1$ and $c>0$, the scale
	\begin{equation}\label{eq:main-cubic-scale}
		t_n
		=
		\exp\left((c+o(1))\,\delta_n^{-m/(m+1)}\right)
	\end{equation}
	satisfies \eqref{eq:main-hierarchy-assumptions}.  At this scale,
	\begin{equation}\label{eq:main-cubic-scale-contribution}
		\begin{gathered}
			\delta_n^m(\log t_n)^{m+1}\longrightarrow c^{m+1},\\
			\frac12\bigl(P_{\delta_n}^{(m)}(t_n)-P_{\delta_n}^{(m-1)}(t_n)\bigr)
			\abs{W_0}^2
			\longrightarrow
			\frac{(-1)^m c^{m+1}}{2^{m+1}(m+1)!}\abs{W_0}^2
			\quad\text{in }L_v^2.
		\end{gathered}
	\end{equation}
\end{theorem}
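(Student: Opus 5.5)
The argument has two independent parts: an elementary analysis of the clock, and a uniform-in-$\delta$ asymptotic analysis of the PDE. The pure clock statements \eqref{eq:main-cubic-hierarchy-clock}, \eqref{eq:main-cubic-scale} and \eqref{eq:main-cubic-scale-contribution} come first. Write $L=\log t$ and $\lambda=\delta L$. By \eqref{eq:Pdelta-full-cubic-expansion},
\[
P_\delta(t)-P^{(m)}_\delta(t)=L\sum_{k\ge m+1}\frac{(-\lambda/2)^k}{(k+1)!}.
\]
Taylor's theorem with remainder applied to $\lambda\mapsto 2(1-\ee^{-\lambda/2})/\lambda$ bounds this tail by $C_m L\lambda^{m+1}=C_m\delta^{m+1}L^{m+2}$ for $\lambda\le1$. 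Since \eqref{eq:main-hierarchy-assumptions} forces $\lambda_n\to0$ (because $L_n\to\infty$), \eqref{eq:main-cubic-hierarchy-clock} follows.

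For the scale \eqref{eq:main-cubic-scale}, put $L_n=(c+o(1))\delta_n^{-m/(m+1)}$. Then
\[
\delta_n^{m+1}L_n^{m+2}=(c+o(1))^{m+2}\,\delta_n^{(m+1)-m(m+2)/(m+1)}=(c+o(1))^{m+2}\,\delta_n^{1/(m+1)}\to0,
\]
and $\delta_n^mL_n^{m+1}=(c+o(1))^{m+1}\to c^{m+1}$. The $L_v^2$ convergence in \eqref{eq:main-cubic-scale-contribution} is then a scalar limit multiplying the fixed function $\abs{W_0}^2$. This function lies in $L_v^2$ because $W_0\in H^{3/4}_v\subset L^2\cap L^\infty$, hence $W_0\in L^4$.

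For the physical-space formula \eqref{eq:main-cubic-hierarchy-physical}, I would rely on the uniform profile theory of the later sections (Sections~\ref{sec:uniform-preliminary-estimates}--\ref{sec:rescaled-amplitude-exact-phase}). The output I expect is this: with $a_\delta$ defined by \eqref{eq:self-similar-amplitude-definition}, there are profiles $W_\delta$, with $W_\delta\to W_0$ in $L^2_v$, such that, uniformly in $0\le\delta\le\delta_0$,
\[
\snorm{a_\delta(t)-\ee^{-\ii\Phi_\delta[W_\delta](t)}W_\delta}_{L^2_v}\to0 \quad\text{as } t\to\infty.
\]
Unitarity of the map in \eqref{eq:self-similar-amplitude-definition} reduces everything to $v$-space. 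There I would make two successive replacements of the phase.

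The first replacement changes $\Phi_\delta[W_{\delta_n}]$ into $\kappa\frac12\abs{W_0}^2P_{\delta_n}(t_n)$. Because $P_{\delta_n}(t_n)\le L_n$ can be as large as $\delta_n^{-m/(m+1)}$, closeness of the profiles in $L^2$ alone is not enough. I would use the first-order expansion of $2^{-1-\delta/2}\abs{W_\delta}^{2+\delta}$ displayed in the introduction (established in \Cref{sec:quantitative-profile-continuity}), which gives a coefficient error of size $O(\delta_n)$ in $L^2_v$. Multiplied by the clock, this produces a phase error $O(\delta_nL_n)=O(\lambda_n)\to0$. Then $\abs{\ee^{\ii\alpha}-\ee^{\ii\beta}}\le\min(2,\abs{\alpha-\beta})$, together with dominated convergence, makes the amplitude difference $o_{L^2_v}(1)$.

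The second replacement changes $P_{\delta_n}(t_n)$ into $P^{(m)}_{\delta_n}(t_n)$. This is a scalar phase difference tending to zero by \eqref{eq:main-cubic-hierarchy-clock}, and $\abs{W_0}^2\abs{W_0}\in L^2_v$ controls the resulting error.

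The main obstacle is the uniformity in $\delta$ of the remainder $a_\delta(t)-\ee^{-\ii\Phi_\delta}W_\delta$ along joint sequences, rather than for fixed $\delta$. I would first try to get this from a rate bound of the form $C\varepsilon^3 t^{-\gamma}$, with $C$ and $\gamma$ independent of $\delta$, coming from the bootstrap for the conjugated profile. The coefficient expansion needed in the first replacement also has to be justified; I will simply invoke it, as the introduction does.
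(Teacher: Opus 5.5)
Your proposal is correct and follows essentially the paper's route. You combine the uniform rate $C\varepsilon t^{-1/16}$ from \Cref{prop:uniform-asymptotics} with the coefficient expansion of \Cref{prop:first-order-profile-expansion}, multiplied by $P_{\delta_n}(t_n)\le\log t_n$ to get an $O(\lambda_n)$ phase error; this is the paper's \Cref{prop:exact-clock-reduction} together with regime one of \Cref{thm:exact-clock-trichotomy}. You then finish with a Taylor bound on the clock, where your local constant $C_m$ for $\lambda\le1$ plays the role of the paper's global explicit bound $\delta^{m+1}(\log t)^{m+2}/(2^{m+1}(m+2)!)$ in \Cref{lem:exact-clock-taylor-remainder}.
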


\begin{theorem}\label{thm:exact_clock}
	Under the small-data assumptions of \Cref{thm:main-time-scale-hierarchy}, the same profile $W_0$ and profiles $Z_0\in H_v^{3/4}$ and $\dot Z_0\in L_v^2$ satisfy the following assertions. Here $Z_0$ is the limit after removal of the evolving phase, $\abs{Z_0}=\abs{W_0}$, and $\dot Z_0$ is its first exponent derivative, as constructed in \Cref{prop:first-order-profile-expansion}. In particular,
	\[
	2\operatorname{Re}(\overline{Z_0}\dot Z_0)
	+\abs{Z_0}^2\left(\log\abs{Z_0}-\frac12\log2\right)
	\in L_v^2.
	\]
	For arbitrary joint sequences $\delta_n\to0^+$ and $t_n\to\infty$, set $\lambda_n=\delta_n\log t_n$. Then
	\begin{equation}\label{eq:main-exact-clock-physical}
		\begin{aligned}
			u_{\delta_n}(t_n,x)
			=&(2t_n)^{-1/2}\ee^{\frac{\ii x^2}{4t_n}}W_0(x/2t_n)\\
			&\times\exp\Biggl\{-\ii\kappa\Biggl[
			\frac12P_{\delta_n}(t_n)\abs{W_0(x/2t_n)}^2\\
			&\hspace{5em}+(1-\ee^{-\lambda_n/2})\Bigl[
			2\operatorname{Re}\bigl(\overline{Z_0(x/2t_n)}\dot Z_0(x/2t_n)\bigr)\\
			&\hspace{7em}+\abs{Z_0(x/2t_n)}^2
			\left(\log\abs{Z_0(x/2t_n)}-\frac12\log2\right)
			\Bigr]\Biggr]\Biggr\}
			+o_{L_x^2}(1).
		\end{aligned}
	\end{equation}
	In \eqref{eq:main-exact-clock-physical}, the expression inside the outer phase brackets may be replaced, with the same $o_{L_x^2}(1)$ error, as follows. In the formulas below, the functions of $v$ are evaluated at $v=x/(2t_n)$.
	
	If $\lambda_n\to0$, then it may be replaced by
	\[
	\frac12 P_{\delta_n}(t_n)\abs{W_0}^2.
	\]
	
	If $\lambda_n\to\Lambda\in(0,\infty)$, then it may be replaced by
	\[
	\frac{1-\ee^{-\lambda_n/2}}{\delta_n}\abs{W_0}^2
	+(1-\ee^{-\Lambda/2})
	\left[
	2\operatorname{Re}(\overline{Z_0}\dot Z_0)
	+\abs{Z_0}^2
	\left(
	\log\abs{Z_0}-\frac12\log2
	\right)
	\right].
	\]
	
	If $\lambda_n\to\infty$, then it may be replaced by
	\[
	\frac{1-\ee^{-\lambda_n/2}}{\delta_n}\abs{W_0}^2
	+2\operatorname{Re}(\overline{Z_0}\dot Z_0)
	+\abs{Z_0}^2
	\left(
	\log\abs{Z_0}-\frac12\log2
	\right).
	\]
	
	All logarithmic products are defined by continuous extension at zero.
\end{theorem}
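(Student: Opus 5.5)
The plan is to reduce the physical-space statement to a statement about the self-similar amplitude $a_\delta$ in \eqref{eq:self-similar-amplitude-definition}, which is unitary from $L_v^2$ to $L_x^2$. It then suffices to prove
\[
a_{\delta_n}(t_n,v)=W_0(v)\exp\{-\ii\kappa\,\Theta_n(v)\}+o_{L_v^2}(1),
\]
where $\Theta_n$ denotes the bracketed phase in \eqref{eq:main-exact-clock-physical}.

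\textbf{Step 1: uniform profile asymptotics.} I will use the uniform asymptotics (to be established in the sections on uniform estimates and the rescaled amplitude). These give profiles $W_\delta$ with
\[
a_\delta(t)=W_\delta\,\ee^{-\ii\Phi_\delta[W_\delta](t)}+o_{L_v^2}(1)
\]
uniformly in $\delta\in[0,\delta_0]$ as $t\to\infty$. Here $\Phi_\delta$ is the frozen phase \eqref{eq:frozen-phase-definition}. I will also need $W_\delta\to W_0$ in $L_v^2$ as $\delta\to0$. Uniformity in $\delta$ is essential, because along a joint sequence the error at $(\delta_n,t_n)$ must still vanish. I would obtain it from the bootstrap bounds $\|a_\delta(t)\|_{L^\infty_v}\lesssim\varepsilon$, $\|\partial_v\text{-profile}\|_{L^2}\lesssim\varepsilon t^{C\varepsilon^2}$, together with a remainder integrable in $t$ independently of $\delta$.

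\textbf{Step 2: expand the clock coefficient.} Writing $W_\delta=Z_\delta\cdot(\text{unimodular phase})$, I will use \Cref{prop:first-order-profile-expansion}. It gives
\[
2^{-1-\delta/2}|W_\delta|^{2+\delta}=\tfrac12|W_0|^2+\tfrac\delta2 R+o_{L^2}(\delta),
\qquad
R=2\operatorname{Re}(\overline{Z_0}\dot Z_0)+|Z_0|^2\bigl(\log|Z_0|-\tfrac12\log2\bigr).
\]
Multiplying by $P_\delta(t)=2(1-\ee^{-\lambda/2})/\delta$ gives
\[
\tfrac12P_\delta|W_0|^2+(1-\ee^{-\lambda/2})R+o_{L^2}(1),
\]
since $(1-\ee^{-\lambda/2})$ is bounded. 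The phase error has $o(1)$ size in $L^2$, so I pass it through the exponential using $|\ee^{\ii\alpha}-\ee^{\ii\beta}|\le\min(2,|\alpha-\beta|)$ and dominated convergence (the amplitude $W_0\in L^2$ supplies the domination).

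I expect the \textbf{main obstacle} to be that $W_\delta\ne W_0$ at the level of the phase: the leading $\tfrac12|W_\delta|^2P_\delta$ term is multiplied by a clock of size up to $\delta^{-1}$. So I must show that the $\delta$-dependence of the full phase of $W_\delta$ (the modulus and the removed evolving phase that defines $Z_\delta$) is exactly captured at first order. Hence the need for the parameter-quotient estimate $\|(Z_\delta-Z_0)/\delta-\dot Z_0\|_{L^2}\to0$, together with $L^\infty$ bounds so the product expansion holds in $L^2$. The logarithmic term arises from $|Z|^{\delta}=1+\delta\log|Z|+o(\delta)$, which is controlled by $|Z|^2|\log|Z||\lesssim|Z|^{2-}$ near zero and an $L^\infty$ bound. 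The amplitude prefactor $W_{\delta_n}\to W_0$ needs only $L^2$ convergence.

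\textbf{Step 3: regime replacements.} The replacement statements follow from the $L^2$ bound on $R$. If $\lambda_n\to0$, then $(1-\ee^{-\lambda_n/2})R\to0$ in $L^2$. Otherwise $1-\ee^{-\lambda_n/2}$ tends to $1-\ee^{-\Lambda/2}$ or to $1$. Finally, $\tfrac12P_{\delta_n}=(1-\ee^{-\lambda_n/2})/\delta_n$ by definition. The same phase-perturbation lemma then closes each case.
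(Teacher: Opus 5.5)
Your proposal is correct and follows the paper's route. It uses the uniform profile asymptotics of \Cref{prop:uniform-asymptotics} and multiplies the first-order coefficient expansion \eqref{eq:coefficient-first-order-main} by $P_{\delta_n}(t_n)\le 2/\delta_n$, which is \Cref{prop:exact-clock-reduction}. It then replaces $W_{\delta_n}$ by $W_0$ outside the exponential in $L^2$, takes the regime-by-regime limits of $1-\ee^{-\lambda_n/2}$, and transfers everything by the unitary self-similar map.

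The only difference is how you pass the $o_{L^2}(1)$ phase error through the exponential. The paper bounds it by $\norm{W_0}_{L^\infty}$ times the phase error, using $W_0\in H^{3/4}\hookrightarrow L^\infty$. Your dominated-convergence version also works, but needs a subsequence or convergence-in-measure argument, since $L^2$ convergence of the phase error does not directly give a.e. convergence.
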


\subsection{Interpretation of the hierarchy}
\label{subsec:main-hierarchy-interpretation}

The times in \eqref{eq:main-cubic-scale} give the scale picture
\[
1
\ll
\ee^{\delta^{-1/2}}
\ll
\ee^{\delta^{-2/3}}
\ll
\ee^{\delta^{-3/4}}
\ll
\cdots
\ll
\ee^{\delta^{-m/(m+1)}}
\ll
\cdots
\ll
\ee^{O(1)/\delta}.
\]
They do not define mutually exclusive scattering regimes.  Every finite-$m$ scale lies inside the regime $\delta\log t\to0$.  What changes with time is the number of Taylor terms needed for absolute $o(1)$ phase accuracy:
\[
P_\delta^{(0)}
\rightsquigarrow
P_\delta^{(1)}
\rightsquigarrow
P_\delta^{(2)}
\rightsquigarrow
\cdots.
\]
The rigorous hypothesis is \eqref{eq:main-hierarchy-assumptions}, not merely an informal comparison with one of these times.  Since $m/(m+1)\uparrow1$, the finite Taylor scales accumulate at $t=\exp(O(1)/\delta)$.  At that transition scale the Taylor hierarchy is no longer finite, and the exact clock $P_\delta(t)=2(1-t^{-\delta/2})/\delta$ includes all its terms.  The scattering classification itself remains the three regimes stated in \Cref{thm:exact_clock}.

\subsection{Uniform asymptotics}
\label{subsec:uniform-asymptotics-asymptotics}

The following proposition gives an asymptotic profile for each exponent with a common error bound. The subsequent first-order expansion replaces these exponent-dependent profiles by the cubic profile and the explicit correction to the coefficient of the clock.

\begin{proposition}
	\label{prop:uniform-asymptotics}
	There exist $0<\delta_0<1$, $\varepsilon_0>0$, and $C>0$ such that the following holds. Assume
	\begin{equation}\label{eq:main-small-data}
		\norm{u_{\mathrm{in}}}_{H^{0,1}}\leq\varepsilon\leq\varepsilon_0.
	\end{equation}
	For every $0\leq\delta\leq\delta_0$, let
	\[
	u_\delta\in C(\R;L_x^2)\cap L^4_{t,\mathrm{loc}}(\R;L_x^\infty)
	\]
	be the unique global solution in the mass Strichartz class. Then there exist profiles
	$W_\delta\in H^{3/4}(\R)$ such that
	\begin{equation}\label{eq:uniform-asymptotics}
		\sup_{0\leq\delta\leq\delta_0}
		\norm{a_\delta(t)-\ee^{-\ii\Phi_\delta[W_\delta](t)}W_\delta}_{L_v^2}
		\leq C\varepsilon t^{-1/16},
		\qquad t\geq1.
	\end{equation}
	Moreover,
	\begin{equation}\label{eq:uniform-frozen-profile-Hs-bound}
		\sup_{0\leq\delta\leq\delta_0}
		\norm{W_\delta}_{H_v^{3/4}}
		\leq C\varepsilon.
	\end{equation}
\end{proposition}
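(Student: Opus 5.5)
The plan is the Hayashi--Naumkin / Kato--Pusateri scheme in the $H^{0,1}$ form of Ifrim--Tataru, with every constant tracked uniformly in $0\le\delta\le\delta_0$.

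\emph{Global existence.} For $\delta=0$ the cubic problem is mass-subcritical, and so is $\abs{u}^{2+\delta}u$ for $\delta<4$. Strichartz estimates for $(q,r)=(4,\infty)$ and mass conservation therefore give a unique global solution in $C(\R;L^2)\cap L^4_{t,\mathrm{loc}}L^\infty_x$. The local existence time depends only on $\norm{u_{\mathrm{in}}}_{L^2}$, uniformly in $\delta$.

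\emph{Weight.} Let $J=x+2\ii t\partial_x=M(t)\,2\ii t\partial_x\,M(-t)$, where $M(t)=\ee^{\ii x^2/4t}$. Since $J$ commutes with $\ii\partial_t+\partial_x^2$ and, for gauge-invariant powers,
\[
J(\abs{u}^{2+\delta}u)=\tfrac{4+\delta}{2}\abs{u}^{2+\delta}Ju-\tfrac{2+\delta}{2}\abs{u}^{\delta}u^2\overline{Ju},
\]
the energy estimate gives
\[
\norm{Ju(t)}_{L^2}\le\varepsilon+C\int_1^t\norm{u(s)}_{L^\infty}^{2+\delta}\norm{Ju(s)}_{L^2}\dd s.
\]
On $[0,1]$ this follows from the Strichartz bounds.

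\emph{Amplitude equation and bootstrap.} In the variables \eqref{eq:self-similar-amplitude-definition}, $a_\delta(t,v)$ is essentially $\widehat{\mathcal U(-t)u_\delta(t)}$ corrected by the factor $M$. The standard factorization $\mathcal U(t)=M D\mathcal F M$ gives
\[
\ii\partial_t a_\delta=\kappa(2t)^{-1-\delta/2}\abs{a_\delta}^{2+\delta}a_\delta+R_\delta(t,v),
\]
with
\[
\norm{R_\delta}_{L^2_v}\lesssim t^{-1-\delta/2-\theta}\norm{u}_{L^\infty}^{\delta}\bigl(\norm{u_{\mathrm{in}}}+\norm{Ju}\bigr)^{3}
\]
for some small $\theta>0$. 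This comes from the interpolation $\norm{(M-1)f}_{L^2}\lesssim t^{-\theta}\norm{\abs{x}^{2\theta}f}_{L^2}$ with $\theta\le1/2$; only one weight is available, so the powers of $t$ must be chosen carefully. The bootstrap assumptions are
\[
\norm{a_\delta(t)}_{L^\infty_v}\le C_1\varepsilon,\qquad \norm{Ju(t)}_{L^2}\le C_1\varepsilon\, t^{C\varepsilon^2}.
\]
The pointwise ODE preserves $\abs{a_\delta}$ up to the integrable remainder, giving $\norm{u}_{L^\infty}\lesssim\varepsilon t^{-1/2}$. The $L^\infty_v$ bound on $R_\delta$ is obtained from $\norm{f}_{L^\infty}\lesssim\norm{f}_{L^2}^{1/2}\norm{\partial_vf}_{L^2}^{1/2}$. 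That quantity costs $t^{-1/4}$-type losses, and it is the source of the rate $t^{-1/16}$ after interpolation. With these bounds the bootstrap closes. All constants are uniform because $(2t)^{-\delta/2}\le1$ and $\norm{u}_{L^\infty}^\delta\le1$ when $\varepsilon<1$.

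\emph{Profile.} Let $B_\delta=\kappa(2t)^{-1-\delta/2}\abs{a_\delta}^{2+\delta}$ and set $z_\delta(t)=\exp\bigl(\ii\int_1^tB_\delta\bigr)a_\delta(t)$. Then $\partial_tz_\delta$ equals the phase times $R_\delta$, and hence $z_\delta$ is Cauchy in $L^2_v$ with a rate $t^{-1/16}$. Since $\partial_t\abs{a_\delta}^2=2\operatorname{Im}(\bar a_\delta R_\delta)$ is also integrable, $\int_1^tB_\delta$ equals $\Phi_\delta[W_\delta](t)$ up to a constant phase plus $O(t^{-\theta'})$. Note that $\kappa2^{-1-\delta/2}\int_1^t s^{-1-\delta/2}\dd s\,\abs{\cdot}^{2+\delta}$ is exactly the clock $P_\delta$. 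We absorb the constant phase into $W_\delta$; for $\delta>0$ the phase limit exists, while for $\delta=0$ we take the standard log-corrected limit. The crucial point is to estimate the phase difference using $\abs{\abs{a}^{2+\delta}-\abs{W}^{2+\delta}}\lesssim\varepsilon^{1+\delta}\abs{\abs{a}-\abs{W}}$, multiplied by $P_\delta(t)\le\log t$, which is uniform in $\delta$. The log loss is absorbed since the amplitude converges at a power rate.

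\emph{The $H^{3/4}$ bound.} We have $\partial_v a\sim\mathcal F(\mathcal U(-t)Ju)$, with $L^2$ norm at most $t^{C\varepsilon^2}$. Interpolating this against the $L^2$ convergence rate gives a uniform bound in $H^{s}$ for $s<1$, in particular $s=3/4$, and this bound passes to the limit. This yields \eqref{eq:uniform-frozen-profile-Hs-bound}.

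The main obstacle is the remainder bound with only one weight, in particular the $L^\infty$ control of $a_\delta$ uniformly in $\delta$. For this I would follow the Ifrim--Tataru wave-packet testing method; the bounds hold for all $\delta\le\delta_0$ since $\abs{u}^\delta\le1$.
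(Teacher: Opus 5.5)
Your route differs from the paper's in how the $L^\infty$ bootstrap is closed, and that part is sound. The paper never uses a pointwise ODE. It bounds the phase-removed profile $b_\delta=\ee^{\ii\Theta_\delta}A_\delta$ in $H^{3/4}$, using \eqref{eq:fractional-product-estimate} and $\norm{\Theta_\delta(t)}_{H^1}\lesssim t^{C\varepsilon^2}$, and then reads off $\norm{A_\delta}_{L^\infty}=\norm{b_\delta}_{L^\infty}\lesssim\norm{b_\delta}_{H^{3/4}}$. The same estimate gives $Z_\delta\in H^{3/4}$ with a rate, which later feeds the bound on $\Gamma_\delta$.

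You use instead the pointwise identity $\partial_t\abs{A_\delta}^2=2\kappa2^{-1-\delta/2}t^{-1-\delta/2}\operatorname{Im}(\overline{A_\delta}\mathcal R_\delta^\sharp)$ together with $\snorm{\mathcal R_\delta^\sharp}_{L^\infty}\lesssim\snorm{\mathcal R_\delta^\sharp}_{L^2}^{1/2}\snorm{\mathcal R_\delta^\sharp}_{H^1}^{1/2}\lesssim\varepsilon^3t^{-1/4+C\varepsilon^2}$. This needs the $H^1$ bound \eqref{eq:conjugation-remainder-H1}, which you should state. The argument is more elementary and closes the bootstrap on its own, so the Ifrim--Tataru wave packets you defer to are unnecessary. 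It also gives a better rate than $t^{-1/16}$ in \eqref{eq:uniform-asymptotics}.

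Three smaller corrections:
\begin{itemize}
\item The ODE with remainder holds for $A_\delta=S(t)a_\delta=\ee^{-\ii\pi/4}\widehat f_\delta$, not for the amplitude $a_\delta$ of \eqref{eq:self-similar-amplitude-definition}, which carries $\frac{1}{4t^2}\partial_v^2a_\delta$. You must add $\norm{a_\delta-A_\delta}_{L^2}\lesssim t^{-1/2}\norm{a_\delta}_{H^1}$ at the end.
\item The renormalized phase $\Gamma_\delta=\lim_{t\to\infty}\bigl[\int_1^tB_\delta-\kappa2^{-1-\delta/2}\abs{Z_\delta}^{2+\delta}P_\delta(t)\bigr]$ must be used for every $\delta$, not only $\delta=0$. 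The plain limit $\int_1^\infty B_\delta$ has size $\delta^{-1}$ and destroys uniformity.
\item The phase error is a tail integral of $s^{-1-\delta/2}\bigl(\abs{A_\delta(s)}^{2+\delta}-\abs{Z_\delta}^{2+\delta}\bigr)$ over $[t,\infty)$, not $P_\delta(t)$ times a density difference.
\end{itemize}

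The step that does not go through as written is the bound \eqref{eq:uniform-frozen-profile-Hs-bound}. Interpolating $\norm{\partial_va_\delta}_{L^2}\lesssim t^{C\varepsilon^2}$ against an $L^2$ rate cannot be applied to $a_\delta$, which has no $L^2$ limit. The functions that do converge to $W_\delta$, namely $\ee^{\ii\Phi_\delta[W_\delta](t)}a_\delta(t)$, have $H^1$ norms involving $P_\delta(t)\,\partial_v\abs{W_\delta}^{2+\delta}$. That is circular, since $W_\delta$ is not known to lie in $H^1$. Two steps are missing.

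First, interpolate for $z_\delta(t)=\ee^{\ii\int_1^tB_\delta}A_\delta(t)$ instead. Its $H^1$ norm includes $\snorm{A_\delta\,\partial_v\int_1^tB_\delta}_{L^2}\lesssim\varepsilon t^{C\varepsilon^2}$. Dyadic interpolation against $\norm{z_\delta(t)-Z_\delta}_{L^2}\lesssim\varepsilon^3t^{-1/2+C\varepsilon^2}$ then gives $\norm{z_\delta(t)-Z_\delta}_{H^{3/4}}\lesssim\varepsilon^{3/2}t^{-1/8+C\varepsilon^2}$ and $\norm{Z_\delta}_{H^{3/4}}\le C\varepsilon$.

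Second, and this is what ``passes to the limit'' hides, $W_\delta=\ee^{-\ii\Gamma_\delta}Z_\delta$ requires $\Gamma_\delta\in H^{3/4}$ uniformly in $\delta$. The mean-value formula and \eqref{eq:fractional-product-estimate} give $\snorm{\abs{z_\delta(s)}^{2+\delta}-\abs{Z_\delta}^{2+\delta}}_{H^{3/4}}\lesssim\varepsilon^{1+\delta}\norm{z_\delta(s)-Z_\delta}_{H^{3/4}}$, which is integrable against $s^{-1-\delta/2}$. Then $\norm{\ee^{-\ii\Gamma_\delta}-1}_{H^{3/4}}\lesssim\norm{\Gamma_\delta}_{H^{3/4}}$ and the algebra property of $H^{3/4}$ give $\norm{W_\delta}_{H^{3/4}}\le C\varepsilon$. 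This is the content of \Cref{lem:frozen-density-difference,prop:phase-defect-convergence,lem:frozen-profile-properties}. In the paper it rests on the $H^{3/4}$ convergence of $b_\delta$, which your route bypasses and so must supply separately.
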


\subsection{First-order expansion of the profiles}
\label{subsec:first-order-profile-expansion-main}

To turn \eqref{eq:uniform-asymptotics} into a joint-limit formula involving $W_0$, one must estimate
\[
	2^{-1-\delta/2} \abs{W_\delta}^{2+\delta}-2^{-1}\abs{W_0}^2.
\]
In the regime $\delta\log t\to0$, an $O(\delta)$ coefficient bound is compatible with $P_\delta(t)\sim\log t$.  In the transition and saturated regimes, however, $P_\delta(t)$ has size $\delta^{-1}$.  An $O(\delta)$ coefficient variation may therefore leave a finite, non-vanishing phase correction.  The following result identifies this correction in the $L^2$ topology needed for the physical-space asymptotics.

Phase removal produces a limiting profile $Z_\delta$. The convergent defect between the evolving and frozen phases is denoted by $\Gamma_\delta$, and $W_\delta=\ee^{-\ii\Gamma_\delta}Z_\delta$.

\begin{proposition}
	\label{prop:first-order-profile-expansion}
	Under the assumptions of \Cref{prop:uniform-asymptotics}, let $Z_\delta$, $\Gamma_\delta$, and $W_\delta$ be the profiles constructed in \Cref{sec:rescaled-amplitude-exact-phase}.  These profiles are uniformly bounded in $H^{3/4}(\R)$.  There exist
	\[
	\dot Z_0,\ \dot\Gamma_0,\ \dot W_0\in L^2(\R)
	\]
	such that, as $\delta\to0^+$,
	\begin{align}
		Z_\delta
		&=
		Z_0+\delta\dot Z_0+o_{L_v^2}(\delta),
		\label{eq:Z-first-order-main}\\
		\Gamma_\delta
		&=
		\Gamma_0+\delta\dot\Gamma_0+o_{L_v^2}(\delta),
		\label{eq:Gamma-first-order-main}\\
		W_\delta
		&=
		W_0+\delta\dot W_0+o_{L_v^2}(\delta).
		\label{eq:W-first-order-main}
	\end{align}
	Since $\abs{W_\delta}=\abs{Z_\delta}$, the coefficient satisfies
	\[
		2^{-1-\delta/2}\abs{W_\delta}^{2+\delta}
		=2^{-1-\delta/2}\abs{Z_\delta}^{2+\delta},
	\]
	and its first-order expansion is
	\begin{equation}\label{eq:coefficient-first-order-main}
			2^{-1-\delta/2}\abs{W_\delta}^{2+\delta}
			=\frac12\abs{W_0}^2
			+\frac{\delta}{2}\left[
			2\operatorname{Re}(\overline{Z_0}\dot Z_0)
			+\abs{Z_0}^2\left(\log\abs{Z_0}-\frac12\log2\right)
			\right]+o_{L_v^2}(\delta).
	\end{equation}
	Here $\abs{Z_0}^2\log\abs{Z_0}$ is defined to be zero on the zero set of $Z_0$.
\end{proposition}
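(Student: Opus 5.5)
We write $\mathcal G_\delta$ for the interaction profile in the Fourier variable and $\Psi_\delta$ for the evolving phase; the precise notation is fixed in \Cref{sec:rescaled-amplitude-exact-phase}. The plan is to differentiate, in the exponent, the profile equation from which $Z_\delta$ and $\Gamma_\delta$ are built.

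\textbf{Representations of the profiles.} From \Cref{sec:rescaled-amplitude-exact-phase}, $Z_\delta$ should be the limit as $t\to\infty$ of the phase-removed amplitude $z_\delta(t)=\ee^{\ii\Psi_\delta(t)}\mathcal G_\delta(t)$. Its derivative is an integrable remainder $R_\delta(t)$ of size $t^{-1-\eta}$ in $L_v^2$, uniformly in $\delta$. This gives
\[
Z_\delta=z_\delta(1)+\int_1^\infty R_\delta(s)\dd s .
\]
The defect $\Gamma_\delta$ should have an analogous representation as a convergent time integral of $2^{-1-\delta/2}\bigl(\abs{z_\delta(s)}^{2+\delta}-\abs{Z_\delta}^{2+\delta}\bigr)s^{-1-\delta/2}$.

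\textbf{Step 1: first-order expansion of the solution.} The quotient $q_\delta(t)=(z_\delta(t)-z_0(t))/\delta$ satisfies a linearized Duhamel equation. Its forcing terms come from:
\begin{itemize}
\item the exponent derivative of the nonlinearity, $\partial_\delta\abs{u}^{2+\delta}u=\abs{u}^{2+\delta}\log\abs{u}\,u$;
\item the derivative of the clock weight, $\partial_\delta s^{-\delta/2}=-\tfrac12 s^{-\delta/2}\log s$.
\end{itemize}
I will show that $q_\delta$ stays bounded in $L_v^2$ with a weight $\langle\log t\rangle^{C}$ in time. The tool is the same conjugated bootstrap used for \Cref{prop:uniform-asymptotics}. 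The logarithmic factors are absorbed because each remainder has a spare power $t^{-\eta}$.

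Next, I define the candidate derivative $\dot z_0(t)$ as the solution of the formally differentiated equation at $\delta=0$. I then show $q_\delta(t)\to\dot z_0(t)$ in $L^2$ uniformly on compact time sets, with tails controlled uniformly in $\delta$ by the $t^{-\eta}\log^Ct$ bounds. Dominated convergence in the time integral then gives
\[
\dot Z_0=\dot z_0(1)+\int_1^\infty\partial_\delta R_\delta(s)\big|_{\delta=0}\dd s ,
\]
and the expansion \eqref{eq:Z-first-order-main}. The same argument applied to the $\Gamma_\delta$ integral gives \eqref{eq:Gamma-first-order-main}.

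\textbf{Step 2: the expansion of $W_\delta$.} Write $W_\delta=\ee^{-\ii\Gamma_\delta}Z_\delta$. The $L^2$ expansions of $Z_\delta$ and $\Gamma_\delta$ do not multiply directly. Instead I use the uniform $H^{3/4}\subset L^\infty$ bounds, together with the pointwise bound $\abs{\ee^{-\ii a}-\ee^{-\ii b}+\ii(a-b)\ee^{-\ii b}}\le\abs{a-b}^2/2$. The quadratic term is then $o(\delta)$ in $L^2$ provided $\norm{\Gamma_\delta-\Gamma_0}_{L^4}=o(\delta^{1/2})$. This follows by interpolating the $O(\delta)$ bound in $L^2$ against the uniform $L^\infty$ bound. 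The result is $\dot W_0=(\dot Z_0-\ii\dot\Gamma_0 Z_0)\ee^{-\ii\Gamma_0}$.

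\textbf{Step 3: the coefficient.} For \eqref{eq:coefficient-first-order-main}, write
\[
2^{-1-\delta/2}\abs{Z_\delta}^{2+\delta}=\tfrac12\abs{Z_\delta}^2\exp\bigl(\delta(\log\abs{Z_\delta}-\tfrac12\log2)\bigr).
\]
Expanding the exponential with Taylor remainder gives $\tfrac12\abs{Z_\delta}^2+\tfrac\delta2\abs{Z_\delta}^2(\log\abs{Z_\delta}-\tfrac12\log 2)+O(\delta^2\abs{Z_\delta}^{2-}\log^2)$. The remainder is uniformly bounded since $Z_\delta\in L^\infty\cap L^2$.

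The two pieces are then handled as follows:
\begin{itemize}
\item $\abs{Z_\delta}^2-\abs{Z_0}^2=2\delta\operatorname{Re}(\overline{Z_0}\dot Z_0)+o_{L^2}(\delta)$, again using the $L^\infty$ bounds.
\item $\abs{Z_\delta}^2\log\abs{Z_\delta}\to\abs{Z_0}^2\log\abs{Z_0}$ in $L^2$. The map $r\mapsto r^2\log r$ is locally Lipschitz on bounded sets, with the continuous extension at $0$.
\end{itemize}

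\textbf{Main obstacle.} I expect the hardest point to be Step 1. The exponent derivative of the nonlinearity produces $\log\abs{u}$ near zeros of $u$ and $\log s$ growth in time. The key input is the elementary bound
\[
\abs{u}^{2+\delta}\abs{\log\abs u}\lesssim_\theta\abs u^{2+\delta-\theta}+\abs u^{2+\delta+\theta}.
\]
It costs a small power of decay, and that loss must be absorbed by the $t^{-1/16}$ margin of \eqref{eq:uniform-asymptotics}. A second difficulty is proving convergence of the quotients at only $L^2$ regularity for $\dot Z_0$. I would try first to do this by a Cauchy-in-$\delta$ argument on the linearized equation, rather than seeking a genuine derivative bound.
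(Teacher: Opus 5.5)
\textbf{Gap in Step 1: the quotient estimate cannot close in $L_v^2$.} Your architecture follows the paper's: phase-removed profile, exponent quotients, compact-time convergence with dominated convergence in time, then products. The problem is the regularity at which you propose to control the quotient. The time derivative of $g_\delta=(b_\delta-b_0)/\delta$ contains $t^{-1}(\mathcal R_\delta^\sharp-\mathcal R_0^\sharp)/\delta$. Its state part is an average of $D_A\mathcal R_\delta(A)[k_\delta]=S(t)\bigl(D_z(\abs z^{2+\delta}z)|_{z=a}\,S(t)^{-1}k_\delta\bigr)-D_z(\abs z^{2+\delta}z)|_{z=A}\,k_\delta$, where $k_\delta=(A_\delta-A_0)/\delta$.

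\begin{itemize}
\item In $L^2$ this term is only $O(\varepsilon^2\norm{k_\delta}_{L^2})$, with no decay. The $t^{-1/2}$ gain of $S(t)^{\pm1}-I$ is paid for by $\partial_v$, so it needs a derivative on $k_\delta$.
\item With $L^2$ control alone, the best bound on $\norm{\partial_tg_\delta}_{L^2}$ is of order $t^{-1+C\varepsilon^2}(1+\log t)^2$, which is not integrable. So you get neither $\lim_{t\to\infty}g_\delta(t)$ nor uniform tails.
\item The $\Gamma_\delta$ step fails with it, since it needs $\norm{(y_\delta-y_0)/\delta}_{L^2}\to0$ as $t\to\infty$.
\end{itemize}

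The missing ingredient is \Cref{lem:H1-parameter-quotient}. It gives an $H_v^1$ bound $\norm{h_\delta(t)}_{H^1}\leq Ct^{C_0\varepsilon^2}(1+\log t)^3$ for the amplitude quotient. The proof is an energy estimate on $\partial_vh_\delta$ that uses only second real derivatives of the nonlinearity and the bound $\abs{D_z(\abs z^{2+p}z\log\abs z)}\leq C\abs z^{3/2}$. The three-term splitting in \Cref{lem:conjugation-remainder-quotient} then keeps the full $t^{-1/2}$.

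\textbf{Missing starting point at $t=1$.} The data lie only in $H^{0,1}$. So $h_\delta(1)\to\dot a_0(1)$ in $H_v^1$ must be proved on $[0,1]$ by endpoint Strichartz, applied both to the physical quotient $(u_\delta-u_0)/\delta$ and to $J(t)$ applied to it. This is where your neighbouring-power bound for $\abs u^{2+\delta}\abs{\log\abs u}$ actually enters. Once these two ingredients are in place, your regularity worry disappears. One has $h_\delta\to\dot a_0$ in $C([1,T];H_v^1)$ on compact intervals. Bochner dominated convergence with the integrable majorant $Ct^{-5/4}(1+\log t)^5$ then gives $\dot Z_0\in L^2$, with no Cauchy-in-$\delta$ argument.

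\textbf{A smaller slip in Step 2.} The same slip recurs in Step 3 for $\abs{Z_\delta-Z_0}^2/\delta$. Interpolating an $O(\delta)$ bound in $L^2$ against a merely uniform $L^\infty$ bound gives $\norm{\Gamma_\delta-\Gamma_0}_{L^4}=O(\delta^{1/2})$, not $o(\delta^{1/2})$. So the quadratic term is only $O(\delta)$. You need $\norm{\Gamma_\delta-\Gamma_0}_{L^\infty}\to0$, which follows by interpolating against the uniform $H^{3/4}$ bound: $H^{5/8}\hookrightarrow L^\infty$ gives $O(\delta^{1/6})$. With that fix, your Taylor expansion of $\tfrac12\abs{Z_\delta}^2\exp\bigl(\delta(\log\abs{Z_\delta}-\tfrac12\log2)\bigr)$ is a valid alternative to the paper's bound on $\partial_r\bigl((r^{2+\delta}-r^2)/\delta\bigr)$.

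\textbf{On $\Gamma_\delta$.} Take the exponent quotient of the difference $\abs{b_\delta}^{2+\delta}-\abs{Z_\delta}^{2+\delta}$, not of each density separately. Otherwise you lose the $t^{-1/16}$ decay needed to integrate against the weight $t^{-1}$.
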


Together, \Cref{prop:uniform-asymptotics,prop:first-order-profile-expansion} provide the PDE estimates and coefficient expansion for both main theorems.  

\section{Uniform preliminary estimates}
\label{sec:uniform-preliminary-estimates}

We prepare the uniform estimates for the phase removal in the next section. The mass and spatial-weight bounds give the initial profile regularity, and the long-time bounds will be completed by the coupled bootstrap there.

We use $H^{3/4}$ regularity to obtain bounded asymptotic profiles. The scattering statements themselves remain in $L^2$.

\subsection{Estimates for the nonlinearity}
\label{subsec:uniform-nonlinear-calculus}

For $0\leq\delta\leq\delta_0<1$, we estimate the nonlinearity directly. All derivatives below are real derivatives on $\C\simeq\R^2$.

\begin{lemma}
	\label{lem:uniform-tame-estimate}
	Uniformly for $0\leq\delta\leq\delta_0$,
	\begin{equation}\label{eq:uniform-pointwise-nonlinearity-derivatives}
		\abs{D_z^k(\abs{z}^{2+\delta}z)}
		\leq
		C\abs{z}^{3+\delta-k},
		\qquad
		k=1,2.
	\end{equation}
	Consequently, if $g\in H^1(\R)$, then
	\begin{equation}\label{eq:uniform-H1-tame}
		\snorm{\abs{g}^{2+\delta}g}_{H^1}
		\leq
		C\snorm{g}_{L^\infty}^{2+\delta}\snorm{g}_{H^1}.
	\end{equation}
	Moreover, whenever $\abs{z}+\abs{w}\leq1$,
	\begin{align}
		\abs{\abs{z}^{2+\delta}z-\abs{w}^{2+\delta}w}
		&\leq
		C(\abs{z}+\abs{w})^{2+\delta}\abs{z-w},
		\label{eq:uniform-nonlinearity-Lipschitz}\\
		\abs{D_z(\abs{z}^{2+\delta}z)-\left.D_z(\abs{z}^{2+\delta}z)\right|_{z=w}}
		&\leq
		C(\abs{z}+\abs{w})^{1+\delta}\abs{z-w}.
		\label{eq:uniform-nonlinearity-derivative-Lipschitz}
	\end{align}
\end{lemma}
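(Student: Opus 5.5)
The plan is to reduce everything to an explicit computation of real derivatives of $F_\delta(z):=\abs{z}^{2+\delta}z$, viewed as a map $\R^2\to\R^2$. The estimates then follow from homogeneity and the chain rule.

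\textbf{Step 1: homogeneity and the pointwise bounds.} Away from $z=0$, $F_\delta$ is smooth and positively homogeneous of degree $3+\delta$. Hence $D^kF_\delta$ is homogeneous of degree $3+\delta-k$, and
\[
\abs{D^kF_\delta(z)}=\abs{z}^{3+\delta-k}\,\abs{D^kF_\delta(z/\abs{z})}.
\]
This reduces \eqref{eq:uniform-pointwise-nonlinearity-derivatives} to a uniform bound on the unit circle. To get it, I would write out the derivatives explicitly:
\[
D_zF_\delta(z)h=\abs{z}^{2+\delta}h+(2+\delta)\abs{z}^{\delta}\operatorname{Re}(\bar z h)\,z.
\]
Differentiating once more gives terms such as $\delta(2+\delta)\abs{z}^{\delta-2}\operatorname{Re}(\bar z h)\operatorname{Re}(\bar z k)z$. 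On $\abs{z}=1$ every coefficient is a polynomial in $\delta\in[0,\delta_0]$, so the constants are uniform. At $z=0$, $F_\delta$ is $C^2$ with $D^kF_\delta(0)=0$ for $k\le2$, because $3+\delta>2$. The formulas therefore extend continuously, and the case $k=2$ near the origin is covered since $\abs{z}^{1+\delta}\to0$.

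\textbf{Step 2: the tame $H^1$ bound.} For $g\in H^1(\R)$, and hence $g\in L^\infty$, the chain rule for the $C^1$ map $F_\delta$ composed with $H^1$ functions gives $\partial_x F_\delta(g)=D_zF_\delta(g)\partial_xg$. Step 1 then yields
\[
\abs{\partial_xF_\delta(g)}\le C\abs{g}^{2+\delta}\abs{\partial_xg}.
\]
The $L^2$ part is bounded by $\abs{g}^{2+\delta}\abs{g}$. Taking $L^2$ norms and placing $\abs{g}^{2+\delta}$ in $L^\infty$ gives \eqref{eq:uniform-H1-tame}. The chain rule can be justified by approximating $g$ by smooth functions and using the local Lipschitz bound of Step 3.

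\textbf{Step 3: the Lipschitz bounds.} I would use the mean value inequality along the segment $\gamma(s)=w+s(z-w)$, which gives
\[
\abs{F_\delta(z)-F_\delta(w)}\le\sup_s\abs{DF_\delta(\gamma(s))}\abs{z-w}\le C\sup_s\abs{\gamma(s)}^{2+\delta}\abs{z-w}.
\]
Since $\abs{\gamma(s)}\le\abs{z}+\abs{w}$, this is \eqref{eq:uniform-nonlinearity-Lipschitz}. The same argument applied to $DF_\delta$, using the $k=2$ bound, gives \eqref{eq:uniform-nonlinearity-derivative-Lipschitz}. The segment may pass through the origin, where $D^2F_\delta$ is only continuous. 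This is harmless, because $DF_\delta$ is $C^1$ on all of $\R^2$, so the fundamental theorem of calculus along the segment applies.

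The hypothesis $\abs{z}+\abs{w}\le1$ is not actually needed, since the exponents are nonnegative. It only ensures that powers like $(\abs z+\abs w)^{2+\delta}$ can be bounded by smaller powers, which is convenient later.

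\textbf{Main obstacle.} The only delicate point is uniformity as $\delta\to0$ in the second derivative. The term carrying $\abs{z}^{\delta-2}$ has the prefactor $\delta$, and all angular coefficients stay bounded. So the explicit formula of Step 1 is the step I would check most carefully.
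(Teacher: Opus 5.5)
Your proposal is correct and follows essentially the same route as the paper. The paper also uses homogeneity of degree $3+\delta$, with real-derivative coefficients polynomial in $\delta$, for the pointwise bounds; the chain rule for the $H^1$ estimate; and the mean-value formula along the segment for the two Lipschitz bounds. Your explicit second-derivative formula fills in a detail the paper leaves implicit: the $\abs{z}^{\delta-2}$ term carries the factor $\delta$ and is still $O(\abs{z}^{1+\delta})$, which is where uniformity as $\delta\to0$ comes from.
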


\begin{proof}
	The map $z\mapsto\abs{z}^{2+\delta}z$ is homogeneous of degree $3+\delta$, and its real derivatives have coefficients polynomial in $\delta$.  This gives \eqref{eq:uniform-pointwise-nonlinearity-derivatives}, uniformly on $[0,\delta_0]$.  The estimates extend continuously at $z=0$.
	The chain rule gives
	\[
	\norm{\partial_v(\abs{g}^{2+\delta}g)}_{L^2}
	\leq
	C\norm{g}_{L^\infty}^{2+\delta}
	\norm{\partial_vg}_{L^2},
	\]
	which proves \eqref{eq:uniform-H1-tame}.  The mean-value formula and \eqref{eq:uniform-pointwise-nonlinearity-derivatives} prove \eqref{eq:uniform-nonlinearity-Lipschitz} and \eqref{eq:uniform-nonlinearity-derivative-Lipschitz}.
\end{proof}

\subsection{The self-similar equation}
\label{subsec:preliminary-amplitude-identity}

\begin{lemma}
	\label{lem:exact-self-similar-equation}
	The amplitude in \eqref{eq:self-similar-amplitude-definition} satisfies
	\begin{equation}\label{eq:exact-amplitude-equation}
		\ii\partial_ta_\delta
		+
		\frac1{4t^2}\partial_v^2a_\delta
		=
		\kappa 2^{-1-\delta/2} t^{-1-\delta/2}\abs{a_\delta}^{2+\delta}a_\delta.
	\end{equation}
\end{lemma}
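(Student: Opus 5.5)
The plan is to substitute the ansatz \eqref{eq:self-similar-amplitude-definition} directly into \eqref{eq:nls-delta}, written as $\ii\partial_tu_\delta+\partial_x^2u_\delta=\kappa\abs{u_\delta}^{2+\delta}u_\delta$. The computation is first done for a smooth amplitude and then transferred to the mass-class solution. Write $v=x/(2t)$ and $E(t,x)=(2t)^{-1/2}\ee^{\frac{\ii x^2}{4t}}$, so that $u_\delta=E\,a_\delta(t,v)$, and use $\partial_tv=-v/t$ and $\partial_xv=1/(2t)$.

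\textbf{Time derivative.} Differentiating the prefactor and the chain rule give
\[
\ii\partial_tu_\delta
=E\Bigl[-\frac{\ii}{2t}a_\delta+v^2a_\delta+\ii\partial_ta_\delta-\frac{\ii v}{t}\partial_va_\delta\Bigr].
\]
Here we used $\ii\cdot(-\ii x^2/4t^2)=x^2/(4t^2)=v^2$.

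\textbf{Spatial derivatives.} First,
\[
\partial_xu_\delta=E\Bigl[\ii v\,a_\delta+\frac1{2t}\partial_va_\delta\Bigr].
\]
Differentiating once more gives
\[
\partial_x^2u_\delta=E\Bigl[-v^2a_\delta+\frac{\ii}{2t}a_\delta+\frac{\ii v}{t}\partial_va_\delta+\frac1{4t^2}\partial_v^2a_\delta\Bigr].
\]

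\textbf{Cancellation.} Adding the two expressions, the terms $v^2a_\delta$, $\frac{\ii}{2t}a_\delta$ and $\frac{\ii v}{t}\partial_va_\delta$ cancel pairwise, so
\[
\ii\partial_tu_\delta+\partial_x^2u_\delta=E\Bigl[\ii\partial_ta_\delta+\frac1{4t^2}\partial_v^2a_\delta\Bigr].
\]

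\textbf{Nonlinearity.} Since $\abs{E}=(2t)^{-1/2}$,
\[
\abs{u_\delta}^{2+\delta}u_\delta=(2t)^{-1-\delta/2}\abs{a_\delta}^{2+\delta}E\,a_\delta=E\cdot 2^{-1-\delta/2}t^{-1-\delta/2}\abs{a_\delta}^{2+\delta}a_\delta.
\]
Dividing the equation by the nonvanishing factor $E$ gives exactly \eqref{eq:exact-amplitude-equation}.

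\textbf{Justification for the mass-class solution.} Here $u_\delta\in C(\R;L^2)\cap L^4_{t,\mathrm{loc}}L^\infty$, and the main point needing care is that the computation holds in this class. The change of variables $(t,x)\mapsto(t,v)$ with the smooth nonvanishing multiplier $E$ is a diffeomorphism of $[1,\infty)\times\R$. It is unitary in $L^2$ for each fixed $t$, and it maps distributions to distributions. The identities above are therefore valid in $\mathcal D'$ by pairing with test functions, whose pullbacks are again test functions. Alternatively, one can approximate by smooth solutions and pass to the limit using the Strichartz continuity of the flow. In either version, the nonlinear term lies in $L^1_{t,\mathrm{loc}}L^2$ because $u_\delta\in L^4_{t,\mathrm{loc}}L^\infty$ with $2+\delta<4$, so it makes sense as a distribution. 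Consequently \eqref{eq:exact-amplitude-equation} holds in the sense of distributions, equivalently in Duhamel form.
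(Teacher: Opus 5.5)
Your proposal is correct and follows the paper's proof. The paper likewise differentiates $u_\delta=(2t)^{-1/2}\ee^{\frac{\ii x^2}{4t}}a_\delta(t,x/2t)$ directly, shows that $(\ii\partial_t+\partial_x^2)u_\delta$ equals the common factor times $\ii\partial_ta_\delta+\frac{1}{4t^2}\partial_v^2a_\delta$, pulls the factor $(2t)^{-1-\delta/2}$ out of the nonlinearity, and divides by the common factor. Your explicit intermediate terms and your distributional justification for the mass-class solution only fill in details the paper leaves implicit.
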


\begin{proof}
	Put $v=x/(2t)$.  A direct differentiation of
	\[
	u_\delta(t,x)
	=
	(2t)^{-1/2}
	\ee^{\frac{\ii x^2}{4t}}a_\delta(t,v)
	\]
	gives
	\[
	\left(\ii\partial_t+\partial_x^2\right)u_\delta
	=
	(2t)^{-1/2}\ee^{\frac{\ii x^2}{4t}}
	\left(
	\ii\partial_ta_\delta
	+
	\frac1{4t^2}\partial_v^2a_\delta
	\right).
	\]
	Moreover,
	\[
	\abs{u_\delta}^{2+\delta}u_\delta
	=
	(2t)^{-3/2-\delta/2}
	\ee^{\frac{\ii x^2}{4t}}\abs{a_\delta}^{2+\delta}a_\delta.
	\]
	Dividing by the common factor gives \eqref{eq:exact-amplitude-equation}.  The evolution form follows immediately.
\end{proof}

\subsection{The small propagator}
\label{subsec:small-propagator}

For $t\geq1$, define
\begin{equation}\label{eq:small-propagator-definition}
	S(t)
	:=
	\exp\left(\frac{\ii}{4t}\partial_v^2\right).
\end{equation}

\begin{lemma}
	\label{lem:small-propagator-estimates}
	For every $f\in H^1(\R)$ and $t\geq1$,
	\begin{align}
		\snorm{(S(t)^{\pm1}-I)f}_{L^2}
		&\leq
		Ct^{-1/2}\snorm{f}_{H^1},
		\label{eq:small-propagator-L2}\\
		\snorm{(S(t)^{\pm1}-I)f}_{L^\infty}
		&\leq
		Ct^{-1/4}\snorm{f}_{H^1}.
		\label{eq:small-propagator-Linfty}
	\end{align}
\end{lemma}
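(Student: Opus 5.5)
Both estimates follow from the Fourier representation, since $S(t)$ is the Fourier multiplier $\ee^{\mp \ii \xi^2/(4t)}$ (the sign depending on the convention and the power $\pm1$). We therefore work with $\hat f$ and bound the symbol $m_t(\xi):=\ee^{\mp\ii\xi^2/(4t)}-1$ pointwise. The elementary inequality $\abs{\ee^{\ii\theta}-1}\leq\min(2,\abs{\theta})$ gives
\[
\abs{m_t(\xi)}\leq\min\Bigl(2,\frac{\xi^2}{4t}\Bigr)\leq C\Bigl(\frac{\abs{\xi}}{t^{1/2}}\Bigr)^{s}\quad\text{for every } 0\leq s\leq2,
\]
by interpolating the two bounds. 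This is the only input needed.

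For \eqref{eq:small-propagator-L2}, we take $s=1$ and apply Plancherel:
\[
\snorm{(S(t)^{\pm1}-I)f}_{L^2}=\snorm{m_t\hat f}_{L^2}\leq Ct^{-1/2}\snorm{\abs{\xi}\hat f}_{L^2}\leq Ct^{-1/2}\snorm{f}_{H^1}.
\]

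For \eqref{eq:small-propagator-Linfty}, we use the Hausdorff--Young bound $\snorm{g}_{L^\infty}\leq C\snorm{\hat g}_{L^1}$. We then take $s=1/2$ and apply Cauchy--Schwarz with the weight $\langle\xi\rangle$:
\[
\snorm{m_t\hat f}_{L^1}\leq Ct^{-1/4}\int\abs{\xi}^{1/2}\abs{\hat f(\xi)}\dd\xi
\leq Ct^{-1/4}\Bigl(\int\frac{\abs{\xi}}{\langle\xi\rangle^{2}}\dd\xi\Bigr)^{1/2}\snorm{\langle\xi\rangle\hat f}_{L^2}.
\]
The integral $\int\abs{\xi}\langle\xi\rangle^{-2}\dd\xi$ diverges logarithmically, so this exact pairing fails. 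The fix is to choose the exponent slightly differently: put $\langle\xi\rangle^{1/2+\eta}$ on the weight side with small $\eta>0$ and use $\abs{m_t}\leq C(\abs{\xi}/t^{1/2})^{1/2-\eta}$ on the symbol. That yields only $t^{-1/4+\eta/2}$, which loses a little.

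To recover exactly $t^{-1/4}$, we instead split frequencies at $\abs{\xi}=R$:
\begin{itemize}
\item On $\abs{\xi}\leq R$, use $\abs{m_t}\leq\xi^2/(4t)$. This contributes at most $t^{-1}\int_{\abs{\xi}\leq R}\abs{\xi}\,\abs{\xi\hat f}\dd\xi\leq Ct^{-1}R^{3/2}\snorm{f}_{H^1}$.
\item On $\abs{\xi}>R$, use $\abs{m_t}\leq2$ and Cauchy--Schwarz. This contributes at most $C\bigl(\int_{\abs{\xi}>R}\xi^{-2}\dd\xi\bigr)^{1/2}\snorm{\xi\hat f}_{L^2}=CR^{-1/2}\snorm{f}_{H^1}$.
\end{itemize}
Choosing $R=t^{1/2}$ balances the two pieces, each of which is then $Ct^{-1/4}\snorm{f}_{H^1}$. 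This proves \eqref{eq:small-propagator-Linfty}.

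The only subtle point is this dyadic balancing to avoid the logarithmic loss. Everything else is routine, and the constants are independent of $t\geq1$ and of the choice of sign.
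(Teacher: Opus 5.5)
Your proof is correct. The $L^2$ bound is exactly the paper's argument: the paper uses $\abs{\ee^{\ii y}-1}\leq 2\abs{y}^{1/2}$, which is your interpolated symbol bound with $s=1$.

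For the $L^\infty$ bound you take a different route. You estimate $\snorm{\widehat g}_{L^1}$ directly with a single frequency split at $\abs{\xi}=t^{1/2}$, using $\abs{m_t}\leq\xi^2/(4t)$ below the split and $\abs{m_t}\leq2$ above it, and both pieces come out as $Ct^{-1/4}\snorm{\partial_vf}_{L^2}$. The paper instead sets $g=(S(t)^{\pm1}-I)f$. It reuses the already proved bound $\snorm{g}_{L^2}\leq Ct^{-1/2}\snorm{f}_{H^1}$ together with the trivial bounded-multiplier bound $\snorm{\partial_vg}_{L^2}\leq2\snorm{\partial_vf}_{L^2}$. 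It then concludes from the one-dimensional Gagliardo--Nirenberg inequality $\snorm{g}_{L^\infty}\leq C\snorm{g}_{L^2}^{1/2}\snorm{\partial_vg}_{L^2}^{1/2}$.

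The two arguments are essentially equivalent. The standard proof of that Gagliardo--Nirenberg inequality is a split of $\snorm{\widehat g}_{L^1}$ at an optimized frequency, so your computation is an inlined version specialized to this multiplier. The paper's version is shorter and more modular: it needs no frequency bookkeeping and reuses the $L^2$ estimate. Yours is self-contained and makes visible that the threshold $\abs{\xi}\sim t^{1/2}$ is exactly where the phase $\xi^2/(4t)$ becomes of order one.

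In a final write-up you can drop the two exploratory attempts, namely the logarithmically divergent pairing and the $\eta$-lossy variant. They are not needed for the argument.
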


\begin{proof}
	The Fourier multiplier of $S(t)-I$ is $\ee^{-\ii\xi^2/(4t)}-1$.  Since
	\[
	\abs{\ee^{\ii y}-1}\leq2\abs{y}^{1/2},
	\]
	Plancherel's theorem gives
	\[
	\snorm{(S(t)-I)f}_{L^2}
	\leq
	Ct^{-1/2}\snorm{\partial_vf}_{L^2}.
	\]
	The same proof applies to $S(t)^{-1}-I$, proving \eqref{eq:small-propagator-L2}.  If $g=(S(t)^{\pm1}-I)f$, then
	\[
	\snorm{g}_{L^2}
	\leq
	Ct^{-1/2}\norm{f}_{H^1},
	\qquad
	\snorm{\partial_vg}_{L^2}
	\leq
	2\snorm{\partial_vf}_{L^2}.
	\]
	The one-dimensional Gagliardo--Nirenberg inequality
	\[
	\snorm{g}_{L^\infty}
	\leq
	C\snorm{g}_{L^2}^{1/2}
	\snorm{\partial_vg}_{L^2}^{1/2}
	\]
	now proves \eqref{eq:small-propagator-Linfty}.
\end{proof}

\subsection{Uniform mass and profile-weight bounds}
\label{subsec:uniform-global-weighted-estimates}

Let
\begin{equation}\label{eq:interaction-profile-definition}
	f_\delta(t)
	:=
	\ee^{-\ii t\partial_x^2}u_\delta(t),
\end{equation}
and
\begin{equation}\label{eq:galilean-vector-field}
	J(t)
	:=
	x+2\ii t\partial_x
	=
	\ee^{\ii t\partial_x^2}x\ee^{-\ii t\partial_x^2}.
\end{equation}

\begin{proposition}
	\label{prop:uniform-preliminary-estimates}
	There exist $\varepsilon_0>0$, $C>0$, and $C_0>0$ such that the following holds. If
	\begin{equation}\label{eq:preliminary-smallness}
		\norm{u_{\mathrm{in}}}_{H^{0,1}}
		\leq
		\varepsilon
		\leq
		\varepsilon_0,
	\end{equation}
	then, for every $0\leq\delta\leq\delta_0$ and either sign $\kappa$, \eqref{eq:nls-delta} has a unique global solution
	\[
	u_\delta
	\in
	C(\R;L_x^2)\cap L^4_{t,\mathrm{loc}}(\R;L_x^\infty),
	\qquad
	J(t)u_\delta\in C([0,1];L_x^2).
	\]
	Its mass is conserved, and on $[0,1]$ one has
	\begin{equation}\label{eq:finite-time-J-bound}
		\sup_{0\leq\delta\leq\delta_0}
		\left(
		\norm{u_\delta}_{L_t^\infty L_x^2\cap L_t^4L_x^\infty([0,1])}
		+
		\norm{J(t)u_\delta}_{L_t^\infty L_x^2\cap L_t^4L_x^\infty([0,1])}
		\right)
		\leq C\varepsilon.
	\end{equation}
	For $t\geq1$,
	\begin{align}
		\sup_{0\leq\delta\leq\delta_0}
		\norm{u_\delta(t)}_{L_x^2}
		&\leq
		\varepsilon,
		\label{eq:uniform-global-mass}\\
		\sup_{0\leq\delta\leq\delta_0}
		\norm{a_\delta(t)}_{H_v^1}
		&\leq
		C\varepsilon t^{C_0\varepsilon^2},
		\label{eq:uniform-amplitude-H1}\\
		\sup_{0\leq\delta\leq\delta_0}
		\norm{a_\delta(t)}_{L_v^\infty}
		&\leq
		C\varepsilon,
		\label{eq:uniform-amplitude-Linfty}\\
		\sup_{0\leq\delta\leq\delta_0}
		\norm{u_\delta(t)}_{L_x^\infty}
		&\leq
		C\varepsilon t^{-1/2},
		\label{eq:uniform-physical-decay}\\
		\sup_{0\leq\delta\leq\delta_0}
		\norm{x f_\delta(t)}_{L_x^2}
		&\leq
		C\varepsilon t^{C_0\varepsilon^2}.
		\label{eq:uniform-one-weight-profile}
	\end{align}
\end{proposition}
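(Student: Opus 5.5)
The plan is to obtain the global mass-class theory from Strichartz estimates and mass conservation. The weighted quantities are then propagated through the Galilean field $J$, and $L^\infty$ decay is closed by a bootstrap on $[1,T]$ that uses the interaction profile $\hat f_\delta$ together with \Cref{lem:small-propagator-estimates}. Every constant must be independent of $\delta$. This rests on two facts: $\varepsilon^{\delta}\le 1$ and $t^{-\delta/2}\le1$ for $t\ge1$, and the nonlinear calculus in \Cref{lem:uniform-tame-estimate} is uniform on $[0,\delta_0]$.

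\emph{Step 1: mass class and the interval $[0,1]$.} Since $3+\delta<5$, the equation is $L^2$-subcritical. A contraction in $L_t^\infty L_x^2\cap L_t^4L_x^\infty([0,T_0])$, using the endpoint pair $(4,\infty)$ and the Lipschitz bound \eqref{eq:uniform-nonlinearity-Lipschitz}, gives local existence and uniqueness. The existence time depends only on $\norm{u_{\mathrm{in}}}_{L^2}$ and is uniform in $\delta$. Mass conservation then globalizes the solution and yields \eqref{eq:uniform-global-mass}.

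For $J$, I will use the gauge identity $J(t)=2\ii t\,\ee^{\ii x^2/4t}\partial_x\ee^{-\ii x^2/4t}$ for $t>0$. Since the nonlinearity is gauge invariant, this gives $\abs{J(\abs{u}^{2+\delta}u)}\le C\abs{u}^{2+\delta}\abs{Ju}$. Because $J$ commutes with $\ii\partial_t+\partial_x^2$, the same Strichartz argument applied to $Ju$ gives \eqref{eq:finite-time-J-bound} with $Ju(0)=xu_{\mathrm{in}}$. This is justified by regularizing the data and passing to the limit.

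\emph{Step 2: identities for $t\ge1$.} A direct computation gives $J u_\delta=\ii(2t)^{-1/2}\ee^{\ii x^2/4t}(\partial_va_\delta)(t,x/2t)$. Hence
\[
\norm{\partial_va_\delta(t)}_{L_v^2}=\norm{J(t)u_\delta(t)}_{L_x^2}=\norm{xf_\delta(t)}_{L_x^2}.
\]
This reduces \eqref{eq:uniform-one-weight-profile} to \eqref{eq:uniform-amplitude-H1}. Also $\norm{u_\delta(t)}_{L^\infty}=(2t)^{-1/2}\norm{a_\delta(t)}_{L^\infty}$, so \eqref{eq:uniform-physical-decay} follows from \eqref{eq:uniform-amplitude-Linfty}.

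An energy identity for $Ju$ then gives
\[
\frac{d}{dt}\norm{Ju}_{L^2}\le C\norm{u}_{L^\infty}^{2+\delta}\norm{Ju}_{L^2}.
\]
If $\norm{a_\delta}_{L^\infty}\le M\varepsilon$ on $[1,T]$, the right-hand side is at most $C(M\varepsilon)^2t^{-1}\norm{Ju}$, using $(M\varepsilon)^\delta\le1$ and $t^{-\delta/2}\le1$. Gronwall from $t=1$ then gives $\norm{a_\delta(t)}_{H^1}\le C\varepsilon t^{C_0\varepsilon^2}$ with $C_0=CM^2$.

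\emph{Step 3: the $L^\infty$ bootstrap, which is the main obstacle.} I will use the factorization $\ee^{\ii t\partial_x^2}=M(t)D(t)\mathcal F M(t)$, which in the present normalization reads $a_\delta(t)=S(t)^{-1}\hat f_\delta(t)$ up to a harmless rescaling of $\hat f$. Then $\hat f=S(t)a$. Using \eqref{eq:exact-amplitude-equation}, $\hat f$ satisfies
\[
\ii\partial_t\hat f=\kappa2^{-1-\delta/2}t^{-1-\delta/2}\abs{\hat f}^{2+\delta}\hat f+R,
\qquad
R=\kappa2^{-1-\delta/2}t^{-1-\delta/2}\bigl[S(t)(\abs{a}^{2+\delta}a)-\abs{\hat f}^{2+\delta}\hat f\bigr].
\]
I will split $R$ into two terms: $(S-I)(\abs a^{2+\delta}a)$, and the difference of nonlinearities evaluated at $a$ and at $Sa$. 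Both are estimated in $L^\infty$ with \eqref{eq:small-propagator-Linfty}, \eqref{eq:uniform-H1-tame} and \eqref{eq:uniform-nonlinearity-Lipschitz}. This gives
\[
\norm{R}_{L^\infty}\le C t^{-5/4}(M\varepsilon)^2\cdot\varepsilon t^{C_0\varepsilon^2}.
\]
That bound is integrable once $C_0\varepsilon^2<1/8$. The main term preserves $\abs{\hat f}$ pointwise, so $\abs{\hat f(t,v)}\le\abs{\hat f(1,v)}+C M^2\varepsilon^3$. Step 1 bounds $\norm{\hat f(1)}_{L^\infty}$ through $\norm{a(1)}_{H^1}\lesssim\varepsilon$.

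Finally, $\norm{a-\hat f}_{L^\infty}\le Ct^{-1/4}\norm{a}_{H^1}\le C\varepsilon t^{-1/8}$. Together these give $\norm{a_\delta(t)}_{L^\infty}\le C_1\varepsilon+C M^2\varepsilon^3$. Choosing $M=2C_1$ and $\varepsilon_0$ small closes the bootstrap, and continuity in $t$ extends $T$ to $\infty$. The delicate point is that $\hat f\in H^1$ (not only $a$) must be justified, since $S(t)$ is unitary on $H^1$, and that the Gronwall exponent $C_0$ is fixed before $\varepsilon_0$ is chosen. Negative times follow by time reversal and are not needed here.
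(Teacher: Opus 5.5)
Your proposal is correct, and Steps 1--2 agree with the paper. That includes the unit-interval contraction with the uniform time margin, mass conservation, Strichartz plus absorption for $J(t)u_\delta$ on $[0,1]$, the identity $\norm{\partial_va_\delta}_{L^2}=\norm{J(t)u_\delta}_{L^2}=\norm{xf_\delta}_{L^2}$, and Gr\"onwall under an $L^\infty$ hypothesis.

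The genuine difference is how you close the $L^\infty$ bootstrap. The paper does not close it inside this proof; it defers to \Cref{prop:phase-removed-convergence}. There it bootstraps $\norm{A_\delta}_{L^\infty}$ and removes the phase $\Theta_\delta$. It then bounds $\partial_tb_\delta$ in $H^{3/4}$ by interpolating the $L^2$ decay and $H^1$ growth of $\mathcal R_\delta^\sharp$, controlling the multiplier $\ee^{\ii\Theta_\delta}$ through $\norm{\Theta_\delta}_{H^1}$ and the fractional product estimate. Finally it uses $\abs{A_\delta}=\abs{b_\delta}$ and $H^{3/4}\hookrightarrow L^\infty$.

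You instead run the classical pointwise ODE argument. The leading term preserves $\abs{\hat f}$, and \eqref{eq:small-propagator-Linfty} together with the tame and Lipschitz bounds gives an $L^\infty$ remainder of size $t^{-5/4+C_0\varepsilon^2}$, which is integrable in time. This route is more elementary and self-contained: it needs no phase bounds, no fractional calculus, and no forward reference. Your ordering of constants is also sound. The contribution $\norm{a_\delta-\hat f}_{L^\infty}\le C\varepsilon$ has a constant independent of $M$, so you can fix $M$ absolutely, then $C_0=CM^2$, then $\varepsilon_0$.

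What the paper's route buys is that the same bootstrap produces the limit $Z_\delta$ in $H^{3/4}$ with the uniform rate $t^{-1/16}$. That is used later: for $W_\delta\in H^{3/4}$, for the $L^\infty$ phase-defect tail, and in the interpolation arguments of \Cref{sec:quantitative-profile-continuity}. Your argument would give this limit only in $L^2\cap L^\infty$, so the $H^{3/4}$ estimate would still need a separate proof.

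Two minor points. First, \eqref{eq:uniform-nonlinearity-Lipschitz} is stated only for $\abs z+\abs w\le1$, so the mass-class contraction should use the unrestricted homogeneous bound, which has the same proof. Second, your energy identity for $J(t)u_\delta$ on $[1,T]$ presupposes $J(t)u_\delta\in C_tL_x^2$ there. This follows by repeating the Step 1 Strichartz argument on unit intervals; the paper instead restarts the amplitude equation in $C_tH_v^1$ at $t=1$.
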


\begin{proof}
	We first construct the solution in a mass Strichartz class, uniformly in the exponent.  On an interval $I\ni t_0$, the one-dimensional endpoint estimate for
	$(\ii\partial_t+\partial_x^2)w=G$ is
	\begin{equation}\label{eq:endpoint-Strichartz}
		\norm{w}_{L_t^\infty L_x^2\cap L_t^4L_x^\infty(I)}
		\leq
		C\left(
		\norm{w(t_0)}_{L_x^2}
		+
		\norm{G}_{L_t^1L_x^2(I)}
		\right).
	\end{equation}
	Set
	\[
	X(I):=L_t^\infty L_x^2(I)\cap L_t^4L_x^\infty(I).
	\]
	For $\abs{I}\leq1$, H\"older's inequality gives
	\begin{equation}\label{eq:mass-subcritical-nonlinearity}
		\begin{aligned}
			\norm{\abs{u}^{2+\delta}u}_{L_t^1L_x^2(I)}
			&\leq
			\norm{u}_{L_t^\infty L_x^2(I)}
			\int_I\norm{u(t)}_{L_x^\infty}^{2+\delta}\,\dd t\\
			&\leq
			\abs{I}^{(2-\delta)/4}
			\norm{u}_{L_t^\infty L_x^2(I)}
			\norm{u}_{L_t^4L_x^\infty(I)}^{2+\delta},
		\end{aligned}
	\end{equation}
	Since $\delta_0<1$, the time margin satisfies $(2-\delta)/4\geq(2-\delta_0)/4>0$.  The pointwise mean-value bound
	\[
	\abs{\abs{z}^{2+\delta}z-\abs{w}^{2+\delta}w}
	\leq
	C_{\delta_0}
	(\abs{z}^{2+\delta}+\abs{w}^{2+\delta})\abs{z-w}
	\]
	and the same calculation give
	\begin{equation}\label{eq:mass-subcritical-Lipschitz}
		\norm{\abs{u}^{2+\delta}u-\abs{w}^{2+\delta}w}_{L_t^1L_x^2(I)}
		\leq
		C\abs{I}^{(2-\delta)/4}
		(\norm{u}_{X(I)}^{2+\delta}+\norm{w}_{X(I)}^{2+\delta})
		\norm{u-w}_{X(I)}.
	\end{equation}
	Thus the Duhamel map is a contraction on each interval of length one whenever the initial mass is sufficiently small, with constants uniform in $\delta$ and independent of the sign of $\kappa$.  For Schwartz data, gauge invariance gives mass conservation.  Approximation in $L^2$ and \eqref{eq:mass-subcritical-Lipschitz} pass both the solution and mass conservation to rough data.  The conserved small mass permits iteration on successive unit intervals in both time directions and proves the asserted global solution class and \eqref{eq:uniform-global-mass}.  In particular, no energy or focusing coercivity is used.
	
	We next propagate the spatial weight covariantly on $[0,1]$.  The vector field in \eqref{eq:galilean-vector-field} commutes with $\ii\partial_t+\partial_x^2$.  Its action on the nonlinearity is
	\begin{equation}\label{eq:J-nonlinearity-identity}
		J(t)\bigl(\abs{u}^{2+\delta}u\bigr)
		=
		\left(1+\frac{2+\delta}{2}\right)\abs{u}^{2+\delta}J(t)u
		-
		\frac{2+\delta}{2}\abs{u}^{\delta}u^2\overline{J(t)u}.
	\end{equation}
	In particular,
	\begin{equation}\label{eq:J-nonlinearity-bound}
		\abs{J(t)\bigl(\abs{u}^{2+\delta}u\bigr)}
		\leq
		C_{\delta_0}\abs{u}^{2+\delta}\abs{J(t)u}.
	\end{equation}
	For a Schwartz solution, put $V=J(t)u$.  Applying \eqref{eq:endpoint-Strichartz} to the equation for $V$ and then using \eqref{eq:J-nonlinearity-bound} and H\"older as in \eqref{eq:mass-subcritical-nonlinearity}, we obtain
	\begin{equation}\label{eq:J-Strichartz-bound}
		\norm{V}_{X([0,1])}
		\leq
		C\norm{xu_{\mathrm{in}}}_{L_x^2}
		+
		C\norm{u}_{L_t^4L_x^\infty([0,1])}^{2+\delta}
		\norm{V}_{L_t^\infty L_x^2([0,1])}.
	\end{equation}
	The previously established bound $\norm{u}_{X([0,1])}\leq C\varepsilon$ allows the last term to be absorbed after decreasing $\varepsilon_0$, uniformly in $\delta$.
	
	To justify the vector-field identity for $H^{0,1}$ data, let $u_{\mathrm{in},n}\to u_{\mathrm{in}}$ be a Schwartz approximation in $H^{0,1}$.  Then $u_n\to u$ in $X([0,1])$.  If
	\[
	G_\delta(u,V)
	:=
	\left(1+\frac{2+\delta}{2}\right)\abs{u}^{2+\delta}V
	-
	\frac{2+\delta}{2}\abs{u}^{\delta}u^2\overline V,
	\]
	then
	\begin{equation*}
		\abs{G_\delta(u,V)-G_\delta(\widetilde u,\widetilde V)}
		\leq
		C(\abs{u}^{2+\delta}+\abs{\widetilde u}^{2+\delta})\abs{V-\widetilde V}
		+C(\abs{u}^{1+\delta}+\abs{\widetilde u}^{1+\delta})
		(\abs V+\abs{\widetilde V})\abs{u-\widetilde u}.
	\end{equation*}
	Placing $V,\widetilde V$ in $L_t^\infty L_x^2$ and the solution factors of total power $2+\delta$ in $L_t^4L_x^\infty$ yields the same positive time margin $(2-\delta)/4$.  Hence $J(t)u_n$ is Cauchy in $X([0,1])$.  Its limit $V$ equals $J(t)u$ distributionally, and its Duhamel source belongs to $L_t^1L_x^2$. Consequently $V\in C([0,1];L_x^2)$ and $V(0)=xu_{\mathrm{in}}$.  This proves \eqref{eq:finite-time-J-bound} and justifies the vector-field calculation for $H^{0,1}$ data.
	
	At $t=1$,
	\[
	a_\delta(1,v)
	=
	2^{1/2}\ee^{-\ii v^2}u_\delta(1,2v),
	\]
	and direct differentiation gives
	\[
	\partial_va_\delta(1,v)
	=
	-\ii2^{1/2}\ee^{-\ii v^2}
	\left(J(1)u_\delta\right)(1,2v).
	\]
	Hence
	\begin{equation}\label{eq:amplitude-at-one-H1}
		\sup_{0\leq\delta\leq\delta_0}
		\norm{a_\delta(1)}_{H_v^1}
		\leq
		C\varepsilon.
	\end{equation}
	
	Starting at $t=1$, Picard iteration for the amplitude equation \eqref{eq:exact-amplitude-equation} applies in $C_tH_v^1$, since its coefficient is smooth for $t\geq1$ and \eqref{eq:uniform-H1-tame} is an estimate in the $v$ variable.  Transforming this solution back to physical variables produces a solution in the mass Strichartz class with the same time-one datum. Uniqueness in that class identifies it with the global solution constructed above.  This restart supplies profile regularity and does not assert physical $H_x^1$ smoothing.
	
	The skew-adjoint linear term in \eqref{eq:exact-amplitude-equation} and \eqref{eq:uniform-H1-tame} give the conditional differential inequality
	\begin{equation}\label{eq:conditional-amplitude-H1}
		\frac{\dd}{\dd t}\norm{a_\delta(t)}_{H_v^1}
		\leq
		Ct^{-1}\norm{a_\delta(t)}_{L_v^\infty}^{2+\delta}
		\norm{a_\delta(t)}_{H_v^1}.
	\end{equation}
	Proposition~\ref{prop:phase-removed-convergence} closes \eqref{eq:conditional-amplitude-H1} through a coupled bootstrap for $\norm{A_\delta}_{L_v^\infty}$ and $\norm{a_\delta}_{H_v^1}$.  This gives \eqref{eq:uniform-amplitude-H1} and \eqref{eq:uniform-amplitude-Linfty}.  The scaling \eqref{eq:self-similar-amplitude-definition} then gives \eqref{eq:uniform-physical-decay}.  Finally,
	\[
	\norm{\partial_va_\delta(t)}_{L^2}
	=
	\norm{J(t)u_\delta(t)}_{L^2}
	=
	\norm{x f_\delta(t)}_{L^2},
	\]
	so \eqref{eq:uniform-one-weight-profile} follows from \eqref{eq:uniform-amplitude-H1}.
\end{proof}

\section{Phase removal and asymptotic profile}
\label{sec:rescaled-amplitude-exact-phase}

We separate the leading nonlinear phase from an integrable remainder, close the uniform bootstrap, and construct the asymptotic profiles in \Cref{prop:uniform-asymptotics}.

Throughout this section and \Cref{sec:quantitative-profile-continuity}, unadorned Sobolev and Lebesgue norms are taken in the profile variable $v$. Norms in the physical variable are marked by an $x$ subscript.

\subsection{The conjugated amplitude and its remainder}
\label{subsec:exact-small-propagator}

Define
\begin{equation}\label{eq:conjugated-amplitude-definition}
	A_\delta(t)
	:=
	S(t)a_\delta(t).
\end{equation}
We use the unitary Fourier transform
\[
\widehat g(\xi)
=
(2\pi)^{-1/2}\int_\R\ee^{-\ii x\xi}g(x)\,\dd x.
\]
Completing the square in the free Schr\"odinger representation and using
\eqref{eq:interaction-profile-definition} gives the profile identity
\begin{equation}\label{eq:exact-Fourier-profile-identity}
	A_\delta(t)
	=
	\ee^{-\ii\pi/4}\widehat f_\delta(t).
\end{equation}
Consequently, Plancherel's theorem, $\partial_\xi\widehat f=-\ii\widehat{xf}$, and the unitarity of $S(t)$ on $H_v^1$ give
\begin{equation}\label{eq:profile-derivative-weight-identity}
	\norm{\partial_va_\delta(t)}_{L_v^2}
	=
	\norm{\partial_vA_\delta(t)}_{L_v^2}
	=
	\norm{x f_\delta(t)}_{L_x^2}
	=
	\norm{J(t)u_\delta(t)}_{L_x^2}.
\end{equation}
Thus every $H_v^1$ input below comes from the spatial weight, not from a physical $H_x^1$ assumption.
Since
\[
\partial_tS(t)
=
-\frac{\ii}{4t^2}\partial_v^2S(t),
\]
\eqref{eq:exact-amplitude-equation} gives the equation
\begin{equation}\label{eq:conjugated-amplitude-equation}
	\partial_tA_\delta
	=
	-\ii\kappa 2^{-1-\delta/2}t^{-1-\delta/2}
	\left[
	\abs{A_\delta}^{2+\delta}A_\delta
	+
	\mathcal R_\delta^\sharp(t)
	\right],
\end{equation}
where
\begin{equation}\label{eq:conjugation-remainder-definition}
	\mathcal R_\delta(t,A)
	:=
	S(t)\bigl(\abs{S(t)^{-1}A}^{2+\delta}S(t)^{-1}A\bigr)-\abs{A}^{2+\delta}A,
	\qquad
	\mathcal R_\delta^\sharp(t)
	:=
	\mathcal R_\delta(t,A_\delta(t)).
\end{equation}

\begin{lemma}
	\label{lem:conjugation-remainder-estimates}
	Suppose on an interval $1\leq t\leq T$ that
	\[
	\norm{A_\delta(t)}_{L^\infty}\leq C\varepsilon,
	\qquad
	\norm{a_\delta(t)}_{H^1}\leq C\varepsilon t^{C_0\varepsilon^2}.
	\]
	Then, uniformly for $0\leq\delta\leq\delta_0$,
	\begin{align}
		\snorm{\mathcal R_\delta^\sharp(t)}_{L^2}
		&\leq
		C\varepsilon^3t^{-1/2+C_0 \varepsilon^2},
		\label{eq:conjugation-remainder-L2}\\
		\snorm{\mathcal R_\delta^\sharp(t)}_{H^1}
		&\leq
		C\varepsilon^3t^{C_0\varepsilon^2},
		\label{eq:conjugation-remainder-H1}\\
		\snorm{\mathcal R_\delta^\sharp(t)}_{H^{3/4}}
		&\leq
		C\varepsilon^3t^{-1/8+C_0 \varepsilon^2}.
		\label{eq:conjugation-remainder-Hs}
	\end{align}
\end{lemma}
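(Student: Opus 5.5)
Write $N_\delta(z)=\abs{z}^{2+\delta}z$. Since $S(t)^{-1}A_\delta(t)=a_\delta(t)$, the remainder can be written as
\[
\mathcal R_\delta^\sharp(t)=S(t)N_\delta(a_\delta)-N_\delta(A_\delta)
=(S(t)-I)N_\delta(a_\delta)+\bigl(N_\delta(a_\delta)-N_\delta(A_\delta)\bigr).
\]
The plan is to estimate the two pieces separately in $L^2$ and in $H^1$, and then obtain \eqref{eq:conjugation-remainder-Hs} by interpolation.

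First I need an $L^\infty$ bound on $a_\delta$ from the hypothesis on $A_\delta$. Since $a_\delta=A_\delta+(S(t)^{-1}-I)A_\delta$, estimate \eqref{eq:small-propagator-Linfty} together with unitarity of $S$ on $H^1$ gives
\[
\norm{a_\delta}_{L^\infty}\leq C\varepsilon+Ct^{-1/4}\norm{a_\delta}_{H^1}\leq C\varepsilon\bigl(1+t^{-1/4+C_0\varepsilon^2}\bigr)\leq C\varepsilon,
\]
once $C_0\varepsilon^2<1/4$. In particular $\abs{a_\delta}+\abs{A_\delta}\leq1$ for small $\varepsilon$, so the Lipschitz bounds of \Cref{lem:uniform-tame-estimate} apply uniformly in $\delta$.

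\textbf{$L^2$ bound.} For the first piece, \eqref{eq:small-propagator-L2} and \eqref{eq:uniform-H1-tame} give
\[
\norm{(S(t)-I)N_\delta(a_\delta)}_{L^2}\leq Ct^{-1/2}\norm{a_\delta}_{L^\infty}^{2+\delta}\norm{a_\delta}_{H^1}\leq C\varepsilon^{3}t^{-1/2+C_0\varepsilon^2}.
\]
I use $\varepsilon^{2+\delta}\leq\varepsilon^2$ for $\varepsilon\leq1$. For the second piece, \eqref{eq:uniform-nonlinearity-Lipschitz} together with \eqref{eq:small-propagator-L2} applied to $A_\delta-a_\delta=(S(t)-I)a_\delta$ gives
\[
\norm{N_\delta(a_\delta)-N_\delta(A_\delta)}_{L^2}\leq C\varepsilon^2\,t^{-1/2}\norm{a_\delta}_{H^1}.
\]
This yields \eqref{eq:conjugation-remainder-L2}.

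\textbf{$H^1$ bound.} Here no decay is needed. Since $S(t)$ is unitary on $H^1$, \eqref{eq:uniform-H1-tame} bounds both $S(t)N_\delta(a_\delta)$ and $N_\delta(A_\delta)$ in $H^1$ by $C\varepsilon^2\norm{a_\delta}_{H^1}$. For $N_\delta(A_\delta)$ I use $\norm{A_\delta}_{H^1}=\norm{a_\delta}_{H^1}$. This proves \eqref{eq:conjugation-remainder-H1}.

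\textbf{Interpolation.} Finally,
\[
\norm{g}_{H^{3/4}}\leq\norm{g}_{L^2}^{1/4}\norm{g}_{H^1}^{3/4}
\]
gives a bound of order $\varepsilon^3t^{-1/8+C_0\varepsilon^2}$, which is \eqref{eq:conjugation-remainder-Hs}.

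The only delicate points are the following. First, the $L^\infty$ control of $a_\delta$ must come from $A_\delta$ and not be assumed; this is where the condition $C_0\varepsilon^2<1/4$ enters. Second, the tame estimate \eqref{eq:uniform-H1-tame} must be applied with constants uniform in $\delta$, which it is. Everything else is routine.
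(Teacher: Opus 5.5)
Your proposal is correct and matches the paper's proof essentially step for step. Both use the same ingredients:
\begin{itemize}
\item the splitting $\mathcal R_\delta^\sharp=(S(t)-I)(\abs{a_\delta}^{2+\delta}a_\delta)+\bigl(\abs{a_\delta}^{2+\delta}a_\delta-\abs{A_\delta}^{2+\delta}A_\delta\bigr)$;
\item the transfer of the $L^\infty$ bound from $A_\delta$ to $a_\delta$ via \eqref{eq:small-propagator-Linfty} under $C_0\varepsilon^2<1/4$;
\item the tame and Lipschitz bounds of \Cref{lem:uniform-tame-estimate} for the $L^2$ and $H^1$ estimates;
\item interpolation with weights $1/4$ and $3/4$ for \eqref{eq:conjugation-remainder-Hs}.
\end{itemize}
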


\begin{proof}
	By \eqref{eq:small-propagator-Linfty},
	\[
	\norm{a_\delta-A_\delta}_{L^\infty}
	\leq
	Ct^{-1/4}\norm{A_\delta}_{H^1}
	\leq
	C\varepsilon t^{-1/4+C_0 \varepsilon^2}.
	\]
	After decreasing $\varepsilon_0$ so that $C_0 \varepsilon^2<1/4$, both $a_\delta$ and $A_\delta$ are bounded by $C\varepsilon$ in $L^\infty$.
	Decompose
	\begin{equation}\label{eq:conjugation-remainder-decomposition}
		\mathcal R_\delta^\sharp(t)
		=
		(S(t)-I)\bigl(\abs{a_\delta}^{2+\delta}a_\delta\bigr)
		+
		\abs{a_\delta}^{2+\delta}a_\delta-\abs{A_\delta}^{2+\delta}A_\delta.
	\end{equation}
	By \Cref{lem:small-propagator-estimates}, \eqref{eq:uniform-H1-tame}, and unitarity of $S(t)$ on $H^1$,
	\[
	\snorm{(S(t)-I)\bigl(\abs{a_\delta}^{2+\delta}a_\delta\bigr)}_{L^2}
	\leq
	Ct^{-1/2}\snorm{\abs{a_\delta}^{2+\delta}a_\delta}_{H^1}
	\leq
	C\varepsilon^2t^{-1/2}\norm{a_\delta}_{H^1}.
	\]
	Similarly, \eqref{eq:uniform-nonlinearity-Lipschitz} gives
	\[
	\snorm{\abs{a_\delta}^{2+\delta}a_\delta-\abs{A_\delta}^{2+\delta}A_\delta}_{L^2}
	\leq
	C\varepsilon^2\snorm{a_\delta-A_\delta}_{L^2}
	\leq
	C\varepsilon^2t^{-1/2}\norm{a_\delta}_{H^1}.
	\]
	Substituting the assumed $H^1$ bound proves \eqref{eq:conjugation-remainder-L2}.  On the other hand,
	\[
	\snorm{\mathcal R_\delta^\sharp(t)}_{H^1}
	\leq
	\snorm{\abs{a_\delta}^{2+\delta}a_\delta}_{H^1}
	+
	\snorm{\abs{A_\delta}^{2+\delta}A_\delta}_{H^1}
	\leq
	C\varepsilon^2\norm{a_\delta}_{H^1},
	\]
	which gives \eqref{eq:conjugation-remainder-H1}.  Interpolating between $L^2$ and $H^1$, with interpolation weights $1/4$ and $3/4$, gives
	\[
	\snorm{\mathcal R_\delta^\sharp(t)}_{H^{3/4}}
	\leq
	C
	\snorm{\mathcal R_\delta^\sharp(t)}_{L^2}^{1/4}
	\snorm{\mathcal R_\delta^\sharp(t)}_{H^1}^{3/4}
	\leq
	C\varepsilon^2t^{-1/8}\norm{a_\delta}_{H^1}.
	\]
The same substitution gives \eqref{eq:conjugation-remainder-Hs}.
	These estimates are linear in $\norm{a_\delta}_{H^1}$ and therefore
	preserve any smaller polynomial exponent available for that norm.
\end{proof}

\subsection{Phase removal and convergence}
\label{subsec:exact-phase-removal}

Define
\begin{align}
	\Theta_\delta(t,v)
	&:=
	\kappa\int_1^t
	2^{-1-\delta/2}\tau^{-1-\delta/2}
	\abs{A_\delta(\tau,v)}^{2+\delta}
	\,\dd\tau,
	\label{eq:Theta-delta-definition}\\
	b_\delta(t,v)
	&:=
	\ee^{\ii\Theta_\delta(t,v)}A_\delta(t,v).
	\label{eq:b-delta-definition}
\end{align}
The phase is real-valued.  By \eqref{eq:conjugated-amplitude-equation}, the leading nonlinear term cancels exactly and
\begin{equation}\label{eq:exact-b-equation}
	\partial_tb_\delta
	=
	-\ii\kappa 2^{-1-\delta/2}t^{-1-\delta/2}
	\ee^{\ii\Theta_\delta}
	\mathcal R_\delta^\sharp(t).
\end{equation}

\begin{proposition}
	\label{prop:phase-removed-convergence}
	After decreasing $\varepsilon_0$ so that
	\begin{equation}\label{eq:section-four-smallness}
		2C_0\varepsilon_0^2\leq\frac1{16},
	\end{equation}
	the bootstrap in \Cref{prop:uniform-preliminary-estimates} closes
	uniformly for $0\leq\delta\leq\delta_0$. Moreover,
	\begin{equation}\label{eq:b-time-derivative-Hs}
		\norm{\partial_tb_\delta(t)}_{H^{3/4}}
		\leq
		C\varepsilon^3t^{-1-1/16}.
	\end{equation}
	Consequently, there is $Z_\delta\in H^{3/4}(\R)$ such that
	\begin{align}
		\sup_{0\leq\delta\leq\delta_0}
		\norm{b_\delta(t)-Z_\delta}_{H^{3/4}}
		&\leq
		C\varepsilon^3t^{-1/16},
		\label{eq:b-Z-tail-Hs}\\
		\sup_{0\leq\delta\leq\delta_0}
		\norm{Z_\delta}_{H^{3/4}}
		&\leq
		C\varepsilon.
		\label{eq:Z-uniform-Hs}
	\end{align}
\end{proposition}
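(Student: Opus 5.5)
We close the estimates through a continuity (bootstrap) argument on $[1,T]$, uniformly in $\delta\in[0,\delta_0]$. The bootstrap hypotheses are
\[
\norm{A_\delta(t)}_{L^\infty}\leq 2C_1\varepsilon,
\qquad
\norm{a_\delta(t)}_{H^1}\leq 2C_1\varepsilon t^{C_0\varepsilon^2},
\qquad 1\leq t\leq T,
\]
where $C_1$ is fixed by the initial bound \eqref{eq:amplitude-at-one-H1}. The first hypothesis is also consistent at $t=1$ via Sobolev embedding and unitarity of $S(1)$ on $H^1$. Both norms are continuous in $t$, by the $C_tH^1_v$ restart solution of \Cref{prop:uniform-preliminary-estimates}. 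Hence it suffices to recover both bounds with $2C_1$ replaced by $\tfrac32 C_1$, for $\varepsilon_0$ small and with constants independent of $T$ and $\delta$.

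\textbf{Step 1: the $H^1$ bound.} Under the hypotheses, $\norm{a_\delta}_{L^\infty}\leq \norm{A_\delta}_{L^\infty}+\norm{(S^{-1}-I)A_\delta}_{L^\infty}$. By \eqref{eq:small-propagator-Linfty} this is at most $C\varepsilon$, since $C_0\varepsilon^2<1/4$. Then \eqref{eq:conditional-amplitude-H1} gives
\[
\frac{\dd}{\dd t}\norm{a_\delta}_{H^1}\leq C(C\varepsilon)^{2+\delta}t^{-1}\norm{a_\delta}_{H^1}.
\]
Since $\varepsilon<1$ we have $\varepsilon^{2+\delta}\leq\varepsilon^2$, so Gronwall yields $\norm{a_\delta(t)}_{H^1}\leq C_1\varepsilon t^{C_0\varepsilon^2}$. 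This fixes $C_0$ as a structural constant, and it is here that uniformity in $\delta$ enters.

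\textbf{Step 2: the $L^\infty$ bound via $b_\delta$.} The phase $\Theta_\delta$ is real, so $\abs{A_\delta}=\abs{b_\delta}$ and it suffices to bound $\norm{b_\delta}_{L^\infty}$. By \eqref{eq:exact-b-equation},
\[
\norm{b_\delta(t)-b_\delta(1)}_{L^\infty}\leq\int_1^t \tau^{-1}\norm{\mathcal R^\sharp_\delta(\tau)}_{L^\infty}\dd\tau .
\]
The $L^\infty$ norm is controlled by the $H^{3/4}$ norm through Sobolev embedding in one dimension. \Cref{lem:conjugation-remainder-estimates} gives an integrand of size $\varepsilon^3\tau^{-1-1/8+C_0\varepsilon^2}$, which is integrable because $C_0\varepsilon^2\leq 1/32$. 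Therefore
\[
\norm{A_\delta(t)}_{L^\infty}\leq \norm{A_\delta(1)}_{L^\infty}+C\varepsilon^3\leq C_1\varepsilon+C\varepsilon^3 ,
\]
which closes the bootstrap after shrinking $\varepsilon_0$.

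\textbf{Step 3: the estimate \eqref{eq:b-time-derivative-Hs}.} This is the main obstacle. The phase factor $\ee^{\ii\Theta_\delta}$ is not a bounded multiplier on $H^{3/4}$ without control of $\partial_v\Theta_\delta$, so we estimate in $L^2$ and $H^1$ and interpolate. In $L^2$, \eqref{eq:exact-b-equation} and \eqref{eq:conjugation-remainder-L2} give
\[
\norm{\partial_tb_\delta}_{L^2}\lesssim \varepsilon^3 t^{-3/2+C_0\varepsilon^2}.
\]
In $H^1$, the bound $\abs{\partial_v\abs{A}^{2+\delta}}\lesssim\abs{A}^{1+\delta}\abs{\partial_vA}$ gives
\[
\norm{\partial_v\Theta_\delta(t)}_{L^2}\lesssim\varepsilon^{2}\int_1^t\tau^{-1}\norm{A_\delta}_{H^1}\dd\tau\lesssim\varepsilon^3 t^{C_0\varepsilon^2}\log t .
\]
Combining this with $\norm{\mathcal R^\sharp_\delta}_{L^\infty}\lesssim\varepsilon^3$ and \eqref{eq:conjugation-remainder-H1}, we get $\norm{\partial_tb_\delta}_{H^1}\lesssim \varepsilon^3 t^{-1+C_0\varepsilon^2}\log t$. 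Interpolating with weights $1/4$ and $3/4$ then yields
\[
\norm{\partial_tb_\delta(t)}_{H^{3/4}}\lesssim\varepsilon^3 t^{-1-1/8+C_0\varepsilon^2}(\log t)^{3/4}\lesssim \varepsilon^3t^{-1-1/16},
\]
using $C_0\varepsilon^2\leq1/32$ to absorb both the logarithm and the small power. The same $H^1$ phase estimate applies on $[1,T]$ inside the bootstrap. Alternatively one can run Step 2 directly with this $H^{3/4}$ bound, which is how the coupled bootstrap closes.

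\textbf{Step 4: the limit $Z_\delta$.} Integrating \eqref{eq:b-time-derivative-Hs} from $t$ to $\infty$ shows that $b_\delta(t)$ is Cauchy in $H^{3/4}$. Its limit $Z_\delta$ satisfies \eqref{eq:b-Z-tail-Hs}, with constants uniform in $\delta$. Finally, $\norm{b_\delta(1)}_{H^{3/4}}=\norm{A_\delta(1)}_{H^{3/4}}\leq C\varepsilon$ by \eqref{eq:amplitude-at-one-H1}. Adding the integrated tail $C\varepsilon^3$ gives \eqref{eq:Z-uniform-Hs}.
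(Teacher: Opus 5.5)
Your proof is correct, but it takes a somewhat different and more elementary route than the paper.

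The paper closes the $L^\infty$ part of the bootstrap through the $H^{3/4}$ norm of $b_\delta$. It first bounds $1+\norm{\Theta_\delta}_{H^1}\lesssim t^{C\varepsilon^2}$, using mass conservation for $\norm{\Theta_\delta}_{L^2}$. It then combines the fractional product estimate \eqref{eq:fractional-product-estimate} with $\norm{\ee^{\ii\Theta_\delta}-1}_{H^{3/4}}\lesssim\norm{\Theta_\delta}_{H^1}$ to get the multiplier bound \eqref{eq:phase-multiplier-Hs}. Only then does it obtain both \eqref{eq:b-time-derivative-Hs} and the improved bound $\norm{A_\delta}_{L^\infty}=\norm{b_\delta}_{L^\infty}\lesssim\norm{b_\delta}_{H^{3/4}}$.

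You instead close the $L^\infty$ bound pointwise, using $\abs{\ee^{\ii\Theta_\delta}}=1$. Your bootstrap therefore needs only Gronwall and \Cref{lem:conjugation-remainder-estimates}, and is decoupled from the phase. The $H^{3/4}$ rate then comes a posteriori, by interpolating $L^2$ and $H^1$ bounds for $\partial_tb_\delta$ with the ordinary Leibniz rule. That needs $\norm{\partial_v\Theta_\delta}_{L^2}$ and $\norm{\mathcal R_\delta^\sharp}_{L^\infty}$, but neither $\norm{\Theta_\delta}_{L^2}$ nor a fractional product rule. Your exponent bookkeeping, $-\tfrac98+C_0\varepsilon^2+\tfrac1{32}\le-1-\tfrac1{16}$, checks out. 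The paper's route sets up the $H^{3/4}$ product machinery that it reuses for $\Gamma_\delta$ and $W_\delta$; yours is more self-contained for this proposition.

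There are two harmless slips.
\begin{itemize}
\item Since $\abs{\partial_v\abs{A}^{2+\delta}}\lesssim\abs{A}^{1+\delta}\abs{\partial_vA}$, you get $\norm{\partial_v\Theta_\delta}_{L^2}\lesssim\varepsilon^{2+\delta}t^{C_0\varepsilon^2}\log t$, not $\varepsilon^3 t^{C_0\varepsilon^2}\log t$.
\item Your $H^1$ bound for $\partial_tb_\delta$ should carry $(1+\log t)$ rather than $\log t$, because the $\norm{\mathcal R_\delta^\sharp}_{H^1}$ term has no logarithm.
\end{itemize}
Neither changes the conclusion. The $\partial_v\Theta_\delta$ term is multiplied by $\norm{\mathcal R_\delta^\sharp}_{L^\infty}\lesssim\varepsilon^3$, and $(1+\log t)^{3/4}\lesssim t^{1/32}$ with a constant independent of $\varepsilon$.
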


\begin{proof}
	We first improve the $H^1$ bound under the assumed $L^\infty$ bound. We then control $b_\delta$ in $H^{3/4}$ to improve the $L^\infty$ bound and close the bootstrap before constructing $Z_\delta$.
	
	By \eqref{eq:amplitude-at-one-H1} and the unitarity of $S(1)$,
	\begin{equation}\label{eq:coupled-bootstrap-initial-bound}
		\norm{a_\delta(1)}_{H^1}+\norm{A_\delta(1)}_{H^{3/4}}
		\leq C\varepsilon.
	\end{equation}
Constants $C$ in polynomial exponents below and in \Cref{sec:quantitative-profile-continuity} are fixed absolute constants, independent of $\delta,t,\varepsilon,C_0$, and may increase from line to line. The single constant $C_0$ is chosen larger than all the finitely many exponent constants obtained in these proofs; $\varepsilon_0$ is then decreased to satisfy \eqref{eq:section-four-smallness} and all earlier smallness conditions. Starting from \eqref{eq:coupled-bootstrap-initial-bound}, a standard continuity argument reduces the proof to improving bounds of the form
\begin{align}
\norm{A_\delta(t)}_{L^\infty}&\leq C\varepsilon,
\label{eq:A-Linfty-bootstrap}\\
\norm{a_\delta(t)}_{H^1}&\leq C\varepsilon t^{C_0\varepsilon^2}
\label{eq:a-H1-coupled-bootstrap}
\end{align}
on a maximal interval $[1,T]$. The constants are fixed independently of $\delta,t,\varepsilon$, sufficiently large in terms of the initial bound and Sobolev embedding that both inequalities are strict at $t=1$. Since $a_\delta=S(t)^{-1}A_\delta$, the small-propagator estimate gives
	\[
		\norm{a_\delta(t)}_{L^\infty}
		\leq\norm{A_\delta(t)}_{L^\infty}
		+Ct^{-1/4}\norm{a_\delta(t)}_{H^1}
		\leq C\varepsilon,
	\]
provided $C_0\varepsilon_0^2<1/4$. Substitution into \eqref{eq:conditional-amplitude-H1} yields
	\[
	\frac{\dd}{\dd t}\norm{a_\delta(t)}_{H^1}
	\leq C\varepsilon^2t^{-1}\norm{a_\delta(t)}_{H^1},
	\]
since $(C\varepsilon)^{2+\delta}\leq C\varepsilon^2$ uniformly in $\delta$ for $\varepsilon\leq1$. Gr\"onwall's inequality gives
	\begin{equation}\label{eq:a-H1-coupled-improvement}
		\norm{a_\delta(t)}_{H^1}
		\leq\norm{a_\delta(1)}_{H^1}t^{C\varepsilon^2}\leq C\varepsilon t^{C\varepsilon^2}.
	\end{equation}
The exponent constant here is absolute. Choosing $C_0$ larger than it, and using the strict initial bound, strictly improves \eqref{eq:a-H1-coupled-bootstrap}.
The proof of \Cref{lem:conjugation-remainder-estimates}, with \eqref{eq:a-H1-coupled-improvement} as input, now gives \eqref{eq:conjugation-remainder-L2}--\eqref{eq:conjugation-remainder-Hs} with actual exponents $C\varepsilon^2$.

We next estimate the phase. Since $2^{-1-\delta/2}t^{-\delta/2}\leq C$, mass conservation gives
	\[
		\norm{\Theta_\delta(t)}_{L^2}
		\leq
		C\int_1^t
		\tau^{-1}
		\norm{A_\delta(\tau)}_{L^\infty}^{1+\delta}
		\norm{A_\delta(\tau)}_{L^2}
		\,\dd\tau
		\leq
		C\varepsilon^2(1+\log t).
	\]
The real chain rule and \eqref{eq:a-H1-coupled-improvement} also give
	\[
		\norm{\partial_v\Theta_\delta(t)}_{L^2}
		\leq
		C\varepsilon^{2+\delta}
		\int_1^t\tau^{-1+C\varepsilon^2}\,\dd\tau
		\leq
		Ct^{C\varepsilon^2}.
	\]
Since
	\[
	\varepsilon^2(1+\log t)
	\leq
	Ct^{C\varepsilon^2},
	\]
we obtain
	\begin{equation}\label{eq:Theta-H1-growth}
		1+\norm{\Theta_\delta(t)}_{H^1}
		\leq
		Ct^{C\varepsilon^2}.
	\end{equation}
	
Since $3/4>1/2$, the fractional product estimate reads
	\begin{equation}\label{eq:fractional-product-estimate}
		\norm{fg}_{H^{3/4}}
		\leq
		C\left(
		\norm{f}_{L^\infty}\norm{g}_{H^{3/4}}
		+
		\norm{f}_{H^{3/4}}\norm{g}_{L^\infty}
		\right).
	\end{equation}
In particular, $H^{3/4}(\R)$ is an algebra. Since $\Theta_\delta$ is real,
	\[
	\abs{\ee^{\ii\Theta_\delta}-1}\leq\abs{\Theta_\delta},
	\qquad
	\partial_v(\ee^{\ii\Theta_\delta}-1)
	=
	\ii\ee^{\ii\Theta_\delta}\partial_v\Theta_\delta.
	\]
Thus
	\[
	\norm{\ee^{\ii\Theta_\delta}-1}_{H^{3/4}}
	\leq
	\norm{\ee^{\ii\Theta_\delta}-1}_{H^1}
	\leq
	C\norm{\Theta_\delta}_{H^1}.
	\]
Applying \eqref{eq:fractional-product-estimate} to
	\[
	\ee^{\ii\Theta_\delta}\mathcal R_\delta^\sharp
	=
	\mathcal R_\delta^\sharp
	+
	(\ee^{\ii\Theta_\delta}-1)\mathcal R_\delta^\sharp
	\]
gives
	\begin{equation}\label{eq:phase-multiplier-Hs}
		\snorm{\ee^{\ii\Theta_\delta}\mathcal R_\delta^\sharp}_{H^{3/4}}
		\leq
		C\left(1+\norm{\Theta_\delta}_{H^1}\right)
		\snorm{\mathcal R_\delta^\sharp}_{H^{3/4}}.
	\end{equation}
	Here the second term in \eqref{eq:fractional-product-estimate} is controlled by
	\[
	\norm{\mathcal R_\delta^\sharp}_{L^\infty}
	\le C
	\norm{\mathcal R_\delta^\sharp}_{H^{3/4}}.
	\]
Use the actual polynomial bounds underlying \eqref{eq:conjugation-remainder-Hs} and \eqref{eq:Theta-H1-growth}. Since $2^{-1-\delta/2}t^{-\delta/2}\leq C$, \eqref{eq:exact-b-equation} gives
	\[
	\norm{\partial_tb_\delta(t)}_{H^{3/4}}
	\leq
	C\varepsilon^3
	t^{-1-1/8+C\varepsilon^2}.
	\]
The exponent constant in this last bound includes the sum of the constants from the remainder and the phase multiplier. Once it is dominated by $C_0$, \eqref{eq:section-four-smallness} gives $C\varepsilon^2\leq C_0\varepsilon_0^2\leq1/32<1/16$. This proves \eqref{eq:b-time-derivative-Hs}. Integrating gives
	\[
	\norm{b_\delta(t)-b_\delta(T)}_{H^{3/4}}
	\leq
	C\varepsilon^3t^{-1/16},
	\qquad
	T\geq t.
	\]
	
Furthermore,
	\[
	\norm{b_\delta(t)}_{H^{3/4}}
	\leq\norm{A_\delta(1)}_{H^{3/4}}+C\varepsilon^3.
	\]
Since $\abs{A_\delta}=\abs{b_\delta}$, Sobolev embedding and the initial bound now strictly improve the assumed $L^\infty$ bound \eqref{eq:A-Linfty-bootstrap}, after decreasing $\varepsilon_0$ if necessary. Both bounds have therefore improved. If the maximal endpoint $T$ were finite, continuity would extend the bounds beyond $T$, a contradiction. Hence $T=\infty$, uniformly in $\delta$.

The tail estimate now constructs $Z_\delta$ in $H^{3/4}$ and proves \eqref{eq:b-Z-tail-Hs}. Letting $t\to\infty$ in the preceding uniform $H^{3/4}$ bound gives \eqref{eq:Z-uniform-Hs}. This also completes the deferred $L^\infty$ and $H^1$ assertions in \Cref{prop:uniform-preliminary-estimates}.
\end{proof}

\subsection{Phase defect and asymptotic profile}
\label{subsec:phase-defect-frozen-profile}

Define
\begin{equation}\label{eq:phase-defect-definition}
	D_\delta(t)
	:=
	\Theta_\delta(t)
	-
	\kappa 2^{-1-\delta/2} \abs{Z_\delta}^{2+\delta}P_\delta(t).
\end{equation}
Because $\Theta_\delta$ is real,
\begin{equation}\label{eq:modulus-A-b}
	\abs{A_\delta}=\abs{b_\delta}.
\end{equation}
Consequently,
\begin{equation}\label{eq:phase-defect-integral}
	D_\delta(t)
	=
	\kappa 2^{-1-\delta/2}
	\int_1^t
	\tau^{-1-\delta/2}
	\left(
	\abs{b_\delta(\tau)}^{2+\delta}
	-
	\abs{Z_\delta}^{2+\delta}
	\right)
	\,\dd\tau.
\end{equation}

\begin{lemma}
\label{lem:frozen-density-difference}
Uniformly for $0\leq\delta\leq\delta_0$,
	\begin{equation}\label{eq:frozen-density-difference-Hs}
		\norm{
			\abs{b_\delta(t)}^{2+\delta}
			-
			\abs{Z_\delta}^{2+\delta}
		}_{H^{3/4}}
		\leq
		C\varepsilon^2t^{-1/16}.
	\end{equation}
\end{lemma}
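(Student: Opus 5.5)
The plan is to write the difference as a first-order Taylor integral and then apply the fractional product estimate \eqref{eq:fractional-product-estimate}. The inputs are the tail bound \eqref{eq:b-Z-tail-Hs}, the uniform bound \eqref{eq:Z-uniform-Hs}, and the bound $\norm{b_\delta(t)}_{H^{3/4}}\leq C\varepsilon$ obtained in the proof of \Cref{prop:phase-removed-convergence}.

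Set $F_\delta(z):=\abs{z}^{2+\delta}$ on $\C\simeq\R^2$ and $w_s:=Z_\delta+s(b_\delta-Z_\delta)$ for $s\in[0,1]$. Then
\[
\abs{b_\delta}^{2+\delta}-\abs{Z_\delta}^{2+\delta}
=\int_0^1 DF_\delta(w_s)\,\dd s\cdot(b_\delta-Z_\delta).
\]
The real derivatives satisfy $\abs{DF_\delta(z)}\leq C\abs{z}^{1+\delta}$ and $\abs{D^2F_\delta(z)}\leq C\abs{z}^{\delta}$, uniformly in $\delta$; this follows by the same homogeneity argument as in \Cref{lem:uniform-tame-estimate}. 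By Sobolev embedding, $\norm{w_s}_{L^\infty}\leq C\norm{w_s}_{H^{3/4}}\leq C\varepsilon$. For $\varepsilon\leq1$ we therefore have the uniform Lipschitz bound $\abs{DF_\delta(z)-DF_\delta(z')}\leq C\abs{z-z'}$ on the relevant ball, together with $\norm{DF_\delta(w_s)}_{L^\infty}\leq C\varepsilon^{1+\delta}\leq C\varepsilon$.

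Next I estimate $DF_\delta(w_s)$ in $H^{3/4}$. I use the Gagliardo characterization of $H^{3/4}(\R)$: the $L^2$ norm plus the seminorm $\iint\abs{g(x)-g(y)}^2\abs{x-y}^{-1-3/2}\dd x\dd y$. Pointwise, $\abs{DF_\delta(w_s)(x)-DF_\delta(w_s)(y)}\leq C\abs{w_s(x)-w_s(y)}$, so the seminorm is controlled by that of $w_s$. For the $L^2$ part, $\abs{DF_\delta(w_s)}\leq C\abs{w_s}$. Together these give $\norm{DF_\delta(w_s)}_{H^{3/4}}\leq C\norm{w_s}_{H^{3/4}}\leq C\varepsilon$, uniformly in $s$ and $\delta$.

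Applying \eqref{eq:fractional-product-estimate} for each fixed $s$, and using Sobolev embedding once more for $b_\delta-Z_\delta$ in $L^\infty$, gives
\[
\norm{DF_\delta(w_s)(b_\delta-Z_\delta)}_{H^{3/4}}\leq C\varepsilon\norm{b_\delta(t)-Z_\delta}_{H^{3/4}}\leq C\varepsilon^4t^{-1/16},
\]
by \eqref{eq:b-Z-tail-Hs}. Minkowski's inequality in $s$ then yields the claim. The bound is in fact $\varepsilon^4\leq\varepsilon^2$, and all constants are uniform in $\delta$.

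The main obstacle is the fractional chain-rule step, since $F_\delta$ is only $C^{2}$ with $D^2F_\delta$ merely $\delta$-Hölder at the origin. This is why I avoid a general fractional chain rule and use the Gagliardo-seminorm argument, which needs only the uniform Lipschitz bound on $DF_\delta$ over a small ball.
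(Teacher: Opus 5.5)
Your proposal is correct and follows the paper's own proof. The paper likewise writes the difference by the mean-value formula, bounds $D_z(\abs{z}^{2+\delta})$ and $D_z^2(\abs{z}^{2+\delta})$ uniformly in $\delta$ on the bounded range of $b_\delta$ and $Z_\delta$, and applies \eqref{eq:fractional-product-estimate} together with \eqref{eq:b-Z-tail-Hs}. Your Gagliardo-seminorm argument spells out the Lipschitz-composition step that the paper leaves implicit when it uses the bounded second derivative to control the coefficient in $H^{3/4}$.
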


\begin{proof}
Uniformly in $\delta$,
	\[
	\abs{D_z(\abs{z}^{2+\delta})}
	\leq
	C\abs{z}^{1+\delta},
	\qquad
	\abs{D_z^2(\abs{z}^{2+\delta})}
	\leq
	C\abs{z}^{\delta}
	\leq C
	\]
on the bounded range of $b_\delta$ and $Z_\delta$.
The mean-value formula, \eqref{eq:fractional-product-estimate}, and \eqref{eq:b-Z-tail-Hs} give
	\[
	\norm{\abs{b_\delta}^{2+\delta}-\abs{Z_\delta}^{2+\delta}}_{H^{3/4}}
	\leq
	C\varepsilon^{1+\delta}
	\norm{b_\delta-Z_\delta}_{H^{3/4}}
	\leq
	C\varepsilon^2t^{-1/16}.
	\]
	At $\delta=0$, $D_z^2(\abs{z}^2)$ is bounded, so the estimate remains uniform at the endpoint.
\end{proof}

\begin{proposition}
\label{prop:phase-defect-convergence}
The real-valued limit
	\begin{equation}\label{eq:Gamma-delta-definition}
		\Gamma_\delta
		:=
		\kappa 2^{-1-\delta/2}
		\int_1^\infty
		t^{-1-\delta/2}
		\left(
		\abs{b_\delta(t)}^{2+\delta}
		-
		\abs{Z_\delta}^{2+\delta}
		\right)
		\,\dd t
	\end{equation}
exists in $H^{3/4}(\R)$, uniformly in $\delta$. Moreover,
	\begin{align}
		\norm{D_\delta(t)-\Gamma_\delta}_{H^{3/4}}
		&\leq
		C\varepsilon^2t^{-1/16-\delta/2},
		\label{eq:phase-defect-tail-Hs}\\
		\sup_{0\leq\delta\leq\delta_0}
		\norm{\Gamma_\delta}_{H^{3/4}}
		&\leq
		C\varepsilon^2.
		\label{eq:Gamma-uniform-Hs}
	\end{align}
In particular,
	\begin{equation}\label{eq:phase-defect-tail-Linfty}
		\norm{D_\delta(t)-\Gamma_\delta}_{L^\infty}
		\leq
		C\varepsilon^2t^{-1/16}.
	\end{equation}
\end{proposition}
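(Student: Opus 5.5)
The plan is to treat \eqref{eq:phase-defect-integral} as a Bochner integral in $H^{3/4}(\R)$ and obtain convergence from absolute integrability of the integrand, using \Cref{lem:frozen-density-difference}.

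First, the integrand $\tau\mapsto \tau^{-1-\delta/2}\bigl(\abs{b_\delta(\tau)}^{2+\delta}-\abs{Z_\delta}^{2+\delta}\bigr)$ is continuous into $H^{3/4}$. This follows from $b_\delta\in C([1,\infty);H^{3/4})$, which comes from \eqref{eq:b-time-derivative-Hs}, together with the mean-value argument in the proof of \Cref{lem:frozen-density-difference}. By \eqref{eq:frozen-density-difference-Hs}, its $H^{3/4}$ norm is bounded by
\[
C\varepsilon^2\tau^{-1-\delta/2-1/16}.
\]
Since $2^{-1-\delta/2}\le 1$, this bound is integrable on $[1,\infty)$ uniformly in $\delta\in[0,\delta_0]$. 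Hence the improper integral defining $\Gamma_\delta$ in \eqref{eq:Gamma-delta-definition} converges absolutely in $H^{3/4}$. Its limit is real-valued, because each $D_\delta(t)$ is real and $H^{3/4}$ convergence implies $L^2$ convergence.

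Second, \eqref{eq:phase-defect-integral} gives
\[
\Gamma_\delta-D_\delta(t)=\kappa2^{-1-\delta/2}\int_t^\infty\tau^{-1-\delta/2}\bigl(\abs{b_\delta}^{2+\delta}-\abs{Z_\delta}^{2+\delta}\bigr)\,\dd\tau.
\]
Integrating the bound above from $t$ to $\infty$ yields
\[
C\varepsilon^2\,\frac{t^{-1/16-\delta/2}}{1/16+\delta/2}\le 16C\varepsilon^2t^{-1/16-\delta/2},
\]
which is \eqref{eq:phase-defect-tail-Hs}. Taking $t=1$ in this bound, and using $D_\delta(1)=0$ (clear from \eqref{eq:phase-defect-integral}), gives \eqref{eq:Gamma-uniform-Hs}. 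Finally, the Sobolev embedding $H^{3/4}(\R)\hookrightarrow L^\infty(\R)$ (since $3/4>1/2$) and $t^{-\delta/2}\le1$ give \eqref{eq:phase-defect-tail-Linfty}.

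No step is a real obstacle, since the work was already done in \Cref{lem:frozen-density-difference}. The only point needing care is uniformity at the endpoint $\delta=0$: the decay rate must come from the $t^{-1/16}$ factor and not from $t^{-\delta/2}$, so the constants do not degenerate as $\delta\to0$. This is why the denominator $1/16+\delta/2$ is bounded below by $1/16$.
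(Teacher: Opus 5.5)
Your proposal is correct and matches the paper's proof. Both bound the integrand in $H^{3/4}$ by $C\varepsilon^2\tau^{-1-\delta/2-1/16}$ using \Cref{lem:frozen-density-difference}, integrate from $t$ to $\infty$, set $t=1$ using $D_\delta(1)=0$, and conclude with $H^{3/4}(\R)\hookrightarrow L^\infty(\R)$. Your remarks on continuity into $H^{3/4}$ and on the lower bound $1/16+\delta/2\geq 1/16$ only make explicit the uniformity in $\delta$ that the paper states without comment.
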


\begin{proof}
\Cref{lem:frozen-density-difference} gives
	\[
	\norm{D_\delta(t)-\Gamma_\delta}_{H^{3/4}}
	\leq
	C\varepsilon^2
	\int_t^\infty
	\tau^{-1-\delta/2-1/16}\,\dd\tau
	\leq
	C\varepsilon^2t^{-1/16-\delta/2}.
	\]
This proves convergence uniformly down to $\delta=0$ and \eqref{eq:phase-defect-tail-Hs}. Taking $t=1$, where $D_\delta(1)=0$, gives \eqref{eq:Gamma-uniform-Hs}. The embedding $H^{3/4}(\R)\hookrightarrow L^\infty(\R)$ gives \eqref{eq:phase-defect-tail-Linfty}.
\end{proof}

\begin{lemma}
\label{lem:frozen-profile-properties}
Set
\begin{equation}\label{eq:frozen-profile-definition}
W_\delta:=\ee^{-\ii\Gamma_\delta}Z_\delta.
\end{equation}
Then
\begin{align}
\abs{W_\delta}&=\abs{Z_\delta},
\label{eq:frozen-profile-modulus}\\
\sup_{0\leq\delta\leq\delta_0}\norm{W_\delta}_{H^{3/4}}
&\leq C\varepsilon.
\label{eq:frozen-profile-Hs-bound}
\end{align}
	Moreover,
	\begin{equation}\label{eq:frozen-profile-phase-identity}
		\Phi_\delta[W_\delta](t)
		=
		\kappa 2^{-1-\delta/2} \abs{Z_\delta}^{2+\delta}P_\delta(t).
	\end{equation}
\end{lemma}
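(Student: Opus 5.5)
The three assertions follow almost directly from the definition \eqref{eq:frozen-profile-definition} together with the uniform bounds already proved in \Cref{prop:phase-removed-convergence,prop:phase-defect-convergence}. We treat them in order.

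\emph{Modulus identity.} By \Cref{prop:phase-defect-convergence}, $\Gamma_\delta$ is a real-valued limit in $H^{3/4}$, hence real almost everywhere. Therefore $\abs{\ee^{-\ii\Gamma_\delta}}=1$ pointwise, and \eqref{eq:frozen-profile-modulus} follows. One small point needs care: the integrand in \eqref{eq:Gamma-delta-definition} is real, and real-valuedness survives the $H^{3/4}$ limit because $H^{3/4}$ convergence implies $L^2$ (hence a.e. along a subsequence) convergence.

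\emph{Uniform $H^{3/4}$ bound.} Write
\[
W_\delta=Z_\delta+(\ee^{-\ii\Gamma_\delta}-1)Z_\delta
\]
and apply the fractional product estimate \eqref{eq:fractional-product-estimate}. For the factor $\ee^{-\ii\Gamma_\delta}-1$, the $L^\infty$ norm is at most $\norm{\Gamma_\delta}_{L^\infty}\leq C\varepsilon^2$ by Sobolev embedding. Its $H^{3/4}$ norm is the main step, since the earlier argument used $H^1$ control of the phase, which is unavailable for $\Gamma_\delta$. I would first try the composition estimate for a smooth function $F$ with $F(0)=0$, applied to $F(y)=\ee^{-\ii y}-1$:
\[
\norm{F(\Gamma_\delta)}_{H^{3/4}}\leq C\bigl(\norm{\Gamma_\delta}_{L^\infty}\bigr)\norm{\Gamma_\delta}_{H^{3/4}}.
\]
If one prefers to avoid citing that estimate, it can be proved directly in two parts. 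The $L^2$ part follows from $\abs{F(y)}\leq\abs{y}$. The homogeneous $\dot H^{3/4}$ part follows from the Gagliardo difference-quotient characterization, using $\abs{F(y)-F(y')}\leq\abs{y-y'}$. Either route gives $\norm{\ee^{-\ii\Gamma_\delta}-1}_{H^{3/4}}\leq C\varepsilon^2$. Combining this with \eqref{eq:Z-uniform-Hs} and \eqref{eq:Gamma-uniform-Hs} in the product estimate yields
\[
\norm{W_\delta}_{H^{3/4}}\leq C\varepsilon+C\varepsilon^2\cdot C\varepsilon\leq C\varepsilon,
\]
uniformly in $\delta$. This proves \eqref{eq:frozen-profile-Hs-bound}.

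\emph{Phase identity.} This is immediate from \eqref{eq:frozen-phase-definition} and \eqref{eq:frozen-profile-modulus}, since the frozen phase depends on $G$ only through $\abs{G}$:
\[
\Phi_\delta[W_\delta](t)=\kappa2^{-1-\delta/2}\abs{W_\delta}^{2+\delta}P_\delta(t)=\kappa2^{-1-\delta/2}\abs{Z_\delta}^{2+\delta}P_\delta(t),
\]
which is \eqref{eq:frozen-profile-phase-identity}.

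The only nontrivial ingredient is the fractional composition bound for $\ee^{-\ii\Gamma_\delta}$ in $H^{3/4}$; everything else is bookkeeping with the bounds already established.
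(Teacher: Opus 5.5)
Your proposal is correct and follows essentially the same route as the paper: the reality of $\Gamma_\delta$ gives the modulus identity, the Lipschitz composition bound $\norm{\ee^{-\ii\Gamma_\delta}-1}_{H^{3/4}}\leq C\norm{\Gamma_\delta}_{H^{3/4}}$ (valid since $3/4<1$) combined with the fractional product estimate and the uniform bounds on $Z_\delta$ and $\Gamma_\delta$ gives the $H^{3/4}$ bound, and the phase identity follows from the modulus identity. Your Gagliardo difference-quotient argument is exactly the justification behind the paper's one-line appeal to the Lipschitz composition estimate.
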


\begin{proof}
The modulus identity follows because $\Gamma_\delta$ is real. Since $0<3/4<1$, the Lipschitz composition estimate gives
	\[
	\norm{\ee^{-\ii\Gamma_\delta}-1}_{H^{3/4}}
	\leq
	C\norm{\Gamma_\delta}_{H^{3/4}}.
	\]
Using \eqref{eq:fractional-product-estimate}, \eqref{eq:Z-uniform-Hs}, and \eqref{eq:Gamma-uniform-Hs}, we obtain \eqref{eq:frozen-profile-Hs-bound}. The phase identity follows from the definition of $\Phi_\delta$, \eqref{eq:frozen-profile-modulus}, and \eqref{eq:frozen-profile-definition}.
\end{proof}

\subsection{Uniform profile asymptotics}
\label{subsec:uniform-frozen-profile-asymptotics}

\begin{proposition}
	\label{prop:uniform-frozen-profile-asymptotics}
	Under the assumptions of \Cref{prop:uniform-preliminary-estimates},
	\begin{equation}\label{eq:uniform-frozen-profile-asymptotics}
		\sup_{0\leq\delta\leq\delta_0}
		\norm{
			a_\delta(t)
			-
			\ee^{-\ii\Phi_\delta[W_\delta](t)}W_\delta
		}_{L^2}
		\leq
		C\varepsilon t^{-1/16},
		\qquad
		t\geq1.
	\end{equation}
\end{proposition}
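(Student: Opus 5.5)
We reduce everything to the objects of \Cref{sec:rescaled-amplitude-exact-phase} and split the error into three pieces. By definition, $a_\delta=S(t)^{-1}A_\delta$ and $A_\delta=\ee^{-\ii\Theta_\delta}b_\delta$. By \eqref{eq:phase-defect-definition} and \eqref{eq:frozen-profile-phase-identity},
\[
\Theta_\delta(t)=\Phi_\delta[W_\delta](t)+D_\delta(t).
\]
Since $W_\delta=\ee^{-\ii\Gamma_\delta}Z_\delta$, the target satisfies
\[
\ee^{-\ii\Phi_\delta[W_\delta](t)}W_\delta=\ee^{-\ii(\Theta_\delta-D_\delta+\Gamma_\delta)}Z_\delta .
\]
We therefore write
\[
a_\delta-\ee^{-\ii\Phi_\delta[W_\delta]}W_\delta
=(S(t)^{-1}-I)A_\delta
+\ee^{-\ii\Theta_\delta}(b_\delta-Z_\delta)
+\ee^{-\ii\Theta_\delta}\bigl(1-\ee^{\ii(D_\delta-\Gamma_\delta)}\bigr)Z_\delta .
\]
Each term is then estimated in $L^2$, uniformly in $\delta$.

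\textbf{First term.} By \eqref{eq:small-propagator-L2}, unitarity of $S(t)$ on $H^1$, and the closed bootstrap bound \eqref{eq:uniform-amplitude-H1}, this term is bounded by
\[
Ct^{-1/2}\norm{a_\delta(t)}_{H^1}\le C\varepsilon t^{-1/2+C_0\varepsilon^2}.
\]
The smallness condition \eqref{eq:section-four-smallness} gives $C_0\varepsilon^2\le 1/32$, so the bound is at most $C\varepsilon t^{-1/16}$.

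\textbf{Second term.} The phase is unimodular, so this term equals $\norm{b_\delta(t)-Z_\delta}_{L^2}$. This is bounded by the $H^{3/4}$ tail estimate \eqref{eq:b-Z-tail-Hs}, giving $C\varepsilon^3t^{-1/16}$.

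\textbf{Third term.} Use $\abs{1-\ee^{\ii y}}\le\abs{y}$ for real $y$ to bound it by
\[
\norm{D_\delta(t)-\Gamma_\delta}_{L^\infty}\norm{Z_\delta}_{L^2}.
\]
Then \eqref{eq:phase-defect-tail-Linfty} and \eqref{eq:Z-uniform-Hs} bound this by $C\varepsilon^3t^{-1/16}$.

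Summing the three bounds, and using $\varepsilon\le1$, gives \eqref{eq:uniform-frozen-profile-asymptotics}, with constants independent of $\delta\in[0,\delta_0]$.

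The only point that needs care is the first term: the exponent growth $t^{C_0\varepsilon^2}$ of the $H^1$ norm must be absorbed into the decay $t^{-1/2}$. The ordering of constants fixed in \Cref{prop:phase-removed-convergence} does this: $C_0$ is chosen first, then $\varepsilon_0$ is decreased. Beyond that, the proof is bookkeeping. One must also check that the identity $\Theta_\delta=\Phi_\delta[W_\delta]+D_\delta$ uses $\abs{W_\delta}=\abs{Z_\delta}$, as recorded in \eqref{eq:frozen-profile-modulus}, so that the frozen phase built from $W_\delta$ matches the one subtracted in $D_\delta$.
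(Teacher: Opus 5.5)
Your proposal is correct and follows the paper's proof: the paper also bounds $a_\delta-A_\delta$ by the small-propagator estimate with the closed $H^1$ bound, and then splits $\ee^{-\ii D_\delta}b_\delta-\ee^{-\ii\Gamma_\delta}Z_\delta$ into the tail $\norm{b_\delta-Z_\delta}_{L^2}$ and the phase-defect term $\norm{D_\delta-\Gamma_\delta}_{L^\infty}\norm{Z_\delta}_{L^2}$. Your three-term decomposition with the common unimodular factor $\ee^{-\ii\Theta_\delta}$ is the same argument written in one step instead of two.
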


\begin{proof}
	Since $a_\delta=S(t)^{-1}A_\delta$,
	\[
	\norm{a_\delta(t)-A_\delta(t)}_{L^2}
	\leq
	Ct^{-1/2}\norm{A_\delta(t)}_{H^1}
	\leq
	C\varepsilon t^{-1/2+C_0\varepsilon^2}.
	\]
	By the choice of $\varepsilon_0$, this term decays faster than $t^{-1/16}$.
	
	By \eqref{eq:phase-defect-definition}, \eqref{eq:b-delta-definition}, and \eqref{eq:frozen-profile-phase-identity},
	\begin{align*}
		\snorm{
			A_\delta(t)
			-
			\ee^{-\ii\Phi_\delta[W_\delta](t)}W_\delta
		}_{L^2}
		&=
		\snorm{
			\ee^{-\ii D_\delta(t)}b_\delta(t)
			-
			\ee^{-\ii\Gamma_\delta}Z_\delta
		}_{L^2}\\
		&\leq
		\snorm{b_\delta(t)-Z_\delta}_{L^2}
		+
		\snorm{D_\delta(t)-\Gamma_\delta}_{L^\infty}
		\snorm{Z_\delta}_{L^2}.
	\end{align*}
	The first term is bounded by \eqref{eq:b-Z-tail-Hs}, and the second by \eqref{eq:phase-defect-tail-Linfty} and \eqref{eq:Z-uniform-Hs}. This proves \eqref{eq:uniform-frozen-profile-asymptotics}.
\end{proof}

\begin{proof}[Proof of \Cref{prop:uniform-asymptotics}]
	The global solutions and uniform bounds follow from \Cref{prop:uniform-preliminary-estimates,prop:phase-removed-convergence}. The asymptotic profiles are defined by \eqref{eq:frozen-profile-definition}, their $H^{3/4}$ bound is \eqref{eq:frozen-profile-Hs-bound}, and \Cref{prop:uniform-frozen-profile-asymptotics} gives the uniform $L^2$ asymptotic estimate.
\end{proof}

\section{First-order expansion in the exponent}
\label{sec:quantitative-profile-continuity}

This section proves \Cref{prop:first-order-profile-expansion} by following the first-order dependence on the exponent through the construction
\[
u_\delta\longrightarrow a_\delta\longrightarrow A_\delta
\longrightarrow b_\delta\longrightarrow (Z_\delta,\Gamma_\delta)
\longrightarrow W_\delta
\longrightarrow 2^{-1-\delta/2}\abs{W_\delta}^{2+\delta}.
\]
The quotients $q_\delta$, $h_\delta$, $k_\delta$, and $g_\delta$ below are the parameter quotients of the already constructed objects $u_\delta$, $a_\delta$, $A_\delta$, and $b_\delta$, respectively. They are used where an evolution equation or a substantial estimate must be carried out. The $H_v^1$ estimate for the amplitude quotient passes through conjugation to an integrable $L_v^2$ equation for the quotient of $b_\delta$, without requiring its second derivative. Taking the limit in time gives the expansion of $Z_\delta$. Estimating the difference of the evolving and limiting densities gives that of $\Gamma_\delta$. Products then give the last two expansions.

\subsection{Expansion of the amplitude}
\label{subsec:amplitude-parameter-quotient}

For $0<\delta\leq\delta_0$, define
\begin{equation}\label{eq:hdelta-definition}
	h_\delta
	:=
	\frac{a_\delta-a_0}{\delta}.
\end{equation}

\begin{lemma}
	\label{lem:H1-parameter-quotient}
	One has
	\begin{equation}\label{eq:hdelta-H1-bound}
		\sup_{0<\delta\leq\delta_0}
		\norm{h_\delta(t)}_{H_v^1}
		\leq
		Ct^{C_0\varepsilon^2}(1+\log t)^3,
		\qquad
		t\geq1.
	\end{equation}
	Moreover, for every $T>1$, $h_\delta$ converges in $C([1,T];H_v^1)$ to the solution $\dot a_0$ of
	\begin{equation}\label{eq:linearized-amplitude-equation}
		\ii\partial_t\dot a_0
		+
		\frac1{4t^2}\partial_v^2\dot a_0
		=
		\frac12\kappa t^{-1}
		\left[
		\left.D_z(\abs{z}^2z)\right|_{z=a_0}\dot a_0
		+
		\abs{a_0}^2a_0
		\left(
		\log\abs{a_0}
		-
		\frac12\log(2t)
		\right)
		\right].
	\end{equation}
	All products containing $\log\abs{a_0}$ are defined by continuous extension at $a_0=0$.
\end{lemma}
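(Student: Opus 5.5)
We will write the equation for the quotient $h_\delta$ by subtracting the amplitude equations \eqref{eq:exact-amplitude-equation} at $\delta$ and at $0$ and dividing by $\delta$:
\[
\ii\partial_th_\delta+\frac1{4t^2}\partial_v^2h_\delta
=\kappa t^{-1}\Bigl[\frac{N_\delta(t,a_\delta)-N_0(t,a_\delta)}{\delta}+\frac{N_0(t,a_\delta)-N_0(t,a_0)}{\delta}\Bigr],
\qquad N_\delta(t,z):=2^{-1-\delta/2}t^{-\delta/2}\abs{z}^{2+\delta}z .
\]
The second bracket equals $\frac12\int_0^1 D_z(\abs{z}^2z)|_{z=a_0+s(a_\delta-a_0)}\,\dd s\; h_\delta$. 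This is linear in $h_\delta$ with coefficient of size $\varepsilon^2$ in $L^\infty$.

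The first bracket is the parameter derivative. Writing $(2t)^{-\delta/2}\abs{z}^{\delta}=\exp\bigl(\delta(\log\abs{z}-\tfrac12\log(2t))\bigr)$ and using $\abs{\ee^{y}-1}\le \abs{y}\ee^{\abs{y}}$, it is bounded pointwise by
\[
C\abs{a_\delta}^3\bigl(\abs{\log\abs{a_\delta}}+\log(2t)\bigr).
\]
Since $\abs{a_\delta}\le C\varepsilon\le1$, the function $\abs{z}^3\abs{\log\abs z}\le C\abs{z}^{2}$, and so its $L^2$ norm is at most $C\varepsilon^2\norm{a_\delta}_{L^2}(1+\log t)$.

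For the $v$-derivative we use the real chain rule. The derivative of $z\mapsto \abs{z}^{2+\delta}z\cdot\delta^{-1}((2t)^{-\delta/2}\abs{z}^\delta-1)$ is bounded by $C\abs z^2(1+\abs{\log\abs z}+\log t)$, which is bounded on $\abs z\le1$ up to the $\log t$ factor. This gives a source in $H^1$ of size $C\varepsilon^2 t^{-1}(1+\log t)\norm{a_\delta}_{H^1}\le C\varepsilon^3t^{-1+C_0\varepsilon^2}(1+\log t)$, using \eqref{eq:uniform-amplitude-H1} and \eqref{eq:uniform-amplitude-Linfty}.

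Next comes the $H^1$ energy estimate, as in \eqref{eq:conditional-amplitude-H1}. The linear part is skew-adjoint. Differentiating the linear term in $v$ produces $\partial_v(\text{coeff})\,h_\delta$, where the coefficient's derivative is bounded by $C\abs{a}\abs{\partial_va}$. We control this by $\norm{a}_{L^\infty}\norm{\partial_v a}_{L^2}\norm{h_\delta}_{L^\infty}$ together with the embedding $\norm{h_\delta}_{L^\infty}\le C\norm{h_\delta}_{H^1}$. The result is
\[
\frac{\dd}{\dd t}\norm{h_\delta}_{H^1}\le C\varepsilon^2t^{-1+C_0\varepsilon^2}\norm{h_\delta}_{H^1}+C\varepsilon^3t^{-1+C_0\varepsilon^2}(1+\log t).
\]
\textbf{Main obstacle.} This step is the main obstacle. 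A naive Gr\"onwall bound with coefficient $\varepsilon^2 t^{-1+C_0\varepsilon^2}$ yields $\exp(Ct^{C_0\varepsilon^2})$, which is too large. To avoid it, we keep the $\partial_v a$ factor only in the inhomogeneous-type term and estimate the product coefficient by $\norm{a}_{L^\infty}\le C\varepsilon$, so that the homogeneous coefficient is $C\varepsilon^2t^{-1}$. The $L^\infty$ norm of $h_\delta$ is handled through the Gagliardo--Nirenberg bound and the $L^2$ estimate of $h_\delta$, which only has coefficient $C\varepsilon^2t^{-1}$. Gr\"onwall then gives $t^{C\varepsilon^2}$ times a source integral of order $(1+\log t)^2$; the remaining slack is absorbed into $(1+\log t)^3$. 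The initial value is controlled by the same Strichartz and $J$-argument on $[0,1]$: the difference of the Duhamel maps at $\delta$ and $0$, divided by $\delta$, is bounded in $X([0,1])$, with its $J$-image bounded likewise, so $\norm{h_\delta(1)}_{H^1}\le C$. The constant $C_0$ is enlarged if needed.

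For convergence on $[1,T]$ we use a Cauchy argument. The difference $h_\delta-h_{\delta'}$ satisfies an equation of the same form, with coefficients converging by the Lipschitz bounds of \Cref{lem:uniform-tame-estimate}. Its sources converge to $\frac12\abs{a_0}^2a_0(\log\abs{a_0}-\frac12\log(2t))$ by dominated convergence in $H^1$, using $a_\delta\to a_0$ in $C([1,T];H^1)$, which follows from the $h_\delta$ bound. The data $h_\delta(1)$ converge by the analogous limit on $[0,1]$. Gr\"onwall on the bounded interval then yields convergence in $C([1,T];H^1)$ to the solution of \eqref{eq:linearized-amplitude-equation}, whose uniqueness follows from the same linear estimate.
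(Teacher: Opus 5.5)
Your proposal is correct and follows essentially the paper's route: the mean-value quotient equation for $h_\delta$, an $L^2$ estimate and then an $H^1$ estimate in which the term $(\partial_v\text{coefficient})\,h_\delta$ is handled by Gagliardo--Nirenberg plus a time-weighted Young inequality, so that the homogeneous coefficient stays $C\varepsilon^2t^{-1}$, initial data from the Strichartz/$J(t)$ quotient argument on $[0,1]$, and compact-time stability for the convergence. Your swapped decomposition (exponent variation at $a_\delta$, cubic state variation) is harmless, but two justifications need adjusting: $\abs{\ee^{y}-1}\leq\abs{y}$ holds because $y\leq0$ here, whereas the factor $\ee^{\abs y}$ you wrote contains $(2t)^{\delta/2}$ and is not bounded in $t$; and on $[0,1]$ the logarithmic source $\abs{u}^{2+\delta}u\log\abs{u}$ must be dominated by neighboring powers $\abs{u}^{3+\delta\pm\rho}$ with $2+\delta+\rho<4$, because $u$ is only in $L_t^4L_x^\infty$ there.
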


\begin{proof}
	We first establish convergence of the initial quotient in $H_v^1$ using the equations for $u_\delta$ and $J(t)u_\delta$ on $[0,1]$. We then estimate its growth for $t\geq1$ and prove compact-time convergence.
	
	On $[0,1]$, use the space
	\begin{equation}\label{eq:compact-time-X-definition}
		X
		:=
		L_t^\infty L_x^2([0,1])
		\cap
		L_t^4L_x^\infty([0,1]).
	\end{equation}
	Fix
	\[
	0<\rho<\min\{1,2-\delta_0\}.
	\]
	Splitting into $0<r\leq1$ and $r\geq1$ gives, uniformly for $0\leq p\leq\delta_0$,
	\begin{align}
		r^{3+p}\abs{\log r}
		&\leq
		C_\rho
		\left(
		r^{3+p-\rho}
		+
		r^{3+p+\rho}
		\right),
		\label{eq:neighboring-power-nonlinearity}\\
		r^{2+p}(1+\abs{\log r})
		&\leq
		C_\rho
		\left(
		r^{2+p-\rho}
		+
		r^{2+p+\rho}
		\right).
		\label{eq:neighboring-power-derivative}
	\end{align}
	The left hand sides are defined to be zero at $r=0$.
	
	For $0\leq p\leq\delta_0$, let $u_p$ denote the mass-class solution of \eqref{eq:nls-delta} with exponent $p$. Since $2+p+\rho<4$, H\"older in time and \Cref{prop:uniform-preliminary-estimates} imply
	\begin{equation}\label{eq:compact-parameter-source-bound}
		\norm{\abs{u_p}^{2+p}u_p\log\abs{u_p}}_{L_t^1L_x^2([0,1])}
		\leq
		C_\rho
		\norm{u_p}_{L_t^\infty L_x^2}
		\left(
		\norm{u_p}_{L_t^4L_x^\infty}^{2+p-\rho}
		+
		\norm{u_p}_{L_t^4L_x^\infty}^{2+p+\rho}
		\right),
	\end{equation}
	where $\partial_p(\abs{z}^{2+p}z)=\abs{z}^{2+p}z\log\abs{z}$, with value zero at $z=0$.
	
	Set
	\[
	q_\delta
	:=
	\frac{u_\delta-u_0}{\delta}
	\]
	and apply the mean-value formula directly:
	\begin{equation}\label{eq:physical-parameter-quotient}
		(\ii\partial_t+\partial_x^2)q_\delta
		=\kappa\int_0^1
		\left.D_z(\abs{z}^{2+\delta}z)\right|_{z=u_0+\theta(u_\delta-u_0)}
		q_\delta\,\dd\theta+\kappa\int_0^1\abs{u_0}^{2+\theta\delta}u_0\log\abs{u_0}\,\dd\theta.
	\end{equation}
	The same H\"older estimate used in \eqref{eq:mass-subcritical-nonlinearity}, together with \eqref{eq:compact-parameter-source-bound}, yields
	\[
	\begin{aligned}
		\norm{\int_0^1
			\left.D_z(\abs{z}^{2+\delta}z)\right|_{z=u_0+\theta(u_\delta-u_0)}
			q_\delta\,\dd\theta}_{L_t^1L_x^2}
		&\leq C\varepsilon^2\norm{q_\delta}_X,\\
		\norm{\int_0^1\abs{u_0}^{2+\theta\delta}u_0\log\abs{u_0}\,\dd\theta}_{L_t^1L_x^2}
		&\leq C_\rho\varepsilon^{3-\rho}.
	\end{aligned}
	\]
	Endpoint Strichartz and absorption therefore give
	\begin{equation}\label{eq:compact-q-bound}
		\sup_{0<\delta\leq\delta_0}
		\norm{q_\delta}_X
		\leq
		C_\rho\varepsilon^{3-\rho}.
	\end{equation}
	
	Let $\dot u_0\in X$ solve
	\begin{equation}\label{eq:physical-linearized-parameter-equation}
		(\ii\partial_t+\partial_x^2)\dot u_0
		=
		\kappa
		\left(
		\left.D_z(\abs{z}^2z)\right|_{z=u_0}\dot u_0
		+
		\abs{u_0}^2u_0\log\abs{u_0}
		\right),
		\qquad
		\dot u_0(0)=0.
	\end{equation}
	The logarithmic term is defined by continuous extension at zero.
	Since
	\[
	u_\delta-u_0
	=
	\delta q_\delta
	\longrightarrow0
	\qquad
	\text{in }X,
	\]
	the state coefficients in \eqref{eq:physical-parameter-quotient} converge to $\left.D_z(\abs{z}^2z)\right|_{z=u_0}$. The parameter sources converge by the mean-value formula, pointwise convergence, and the integrable majorant \eqref{eq:neighboring-power-nonlinearity}. The endpoint Strichartz stability estimate, with the same small-coefficient absorption, gives
	\begin{equation}\label{eq:compact-q-convergence}
		q_\delta
		\longrightarrow
		\dot u_0
		\qquad
		\text{in }X.
	\end{equation}
	
	We next differentiate covariantly. For $0\leq p\leq\delta_0$, put
	\[
	V_p
	:=
	J(t)u_p.
	\]
	The identity \eqref{eq:J-nonlinearity-identity} gives
	\begin{equation}\label{eq:Vp-equation}
		(\ii\partial_t+\partial_x^2)V_p
		=
		\kappa G_p(u_p,V_p),
	\end{equation}
	where
	\begin{equation}\label{eq:Gp-definition}
		G_p(u,V)
		:=
		\left(2+\frac p2\right)\abs{u}^{2+p}V
		-
		\left(1+\frac p2\right)\abs{u}^pu^2\overline V.
	\end{equation}
	All $V_p$ have initial value $xu_{\mathrm{in}}$, and \eqref{eq:finite-time-J-bound} gives
	\[
	\sup_{0\leq p\leq\delta_0}
	\norm{V_p}_X
	\leq
	C\varepsilon.
	\]
	The quotient satisfies
	\[
	\frac{V_\delta-V_0}{\delta}=J(t)q_\delta.
	\]
	A mean value first in the state and then in the parameter yields
	\begin{align}
		\frac{
			G_\delta(u_\delta,V_\delta)-G_0(u_0,V_0)
		}{\delta}
		=&
		\int_0^1
		D_{(u,V)}G_\delta
		\bigl(
		u_0+\theta(u_\delta-u_0),
		V_0+\theta(V_\delta-V_0)
		\bigr)
		[q_\delta,\frac{V_\delta-V_0}{\delta}]
		\,\dd\theta
		\notag\\
		&+
		\int_0^1
		\partial_pG_p(u_0,V_0)\big|_{p=\theta\delta}
		\,\dd\theta.
		\label{eq:G-parameter-decomposition}
	\end{align}
	Uniformly for $p\in[0,\delta_0]$, real differentiation gives
	\begin{align*}
		\abs{D_VG_p(u,V)[R]}
		&\leq
		C\abs{u}^{2+p}\abs R,\\
		\abs{D_uG_p(u,V)[Q]}
		&\leq
		C\abs{u}^{1+p}\abs Q\abs V,\\
		\abs{\partial_pG_p(u,V)}
		&\leq
		C\abs{u}^{2+p}
		\bigl(
		1+\abs{\log\abs u}
		\bigr)
		\abs V.
	\end{align*}
	The expressions are extended continuously at $u=0$. In particular, the powers of $u$ cancel the apparent singular factors arising from differentiation of $\abs u^p$.
	
	Using \eqref{eq:neighboring-power-derivative}, with $\frac{V_\delta-V_0}{\delta},V_p$ in $L_t^\infty L_x^2$ and $u_p,q_\delta$ in $L_t^4L_x^\infty$, endpoint Strichartz gives
	\begin{equation}\label{eq:compact-Jq-bound}
		\norm{\frac{V_\delta-V_0}{\delta}}_X
		\leq
		C\varepsilon^2\norm{\frac{V_\delta-V_0}{\delta}}_X
		+
		C\varepsilon^{1-\rho}
		\norm{q_\delta}_X
		\sup_p\norm{V_p}_X
		+
		C_\rho
		\left(
		\varepsilon^{2-\rho}
		+
		\varepsilon^{2+\delta_0+\rho}
		\right)
		\norm{V_0}_X.
	\end{equation}
	After absorption this bounds $\frac{V_\delta-V_0}{\delta}$ uniformly in $X$.
	The equation for the limiting covariant component is obtained from
	\eqref{eq:G-parameter-decomposition} by replacing
	\[
	(u_\delta,V_\delta,q_\delta,\frac{V_\delta-V_0}{\delta},\delta)
	\]
	with the limiting state and $\dot u_0$. Its covariant component is
	identified distributionally with $J(t)\dot u_0$, as in the weight
	argument above.
	State convergence follows from
	\eqref{eq:compact-q-convergence} and
	\[
	V_\delta-V_0
	\longrightarrow0
	\qquad
	\text{in }X.
	\]
	The explicit source converges by
	\eqref{eq:neighboring-power-derivative}. Strichartz stability and
	absorption then give
	\begin{equation}\label{eq:compact-Jq-convergence}
		\frac{V_\delta-V_0}{\delta}
		\longrightarrow
		J(t)\dot u_0
		\qquad
		\text{in }X
	\end{equation}
	in the distributional sense.
	
	All four functions in \eqref{eq:compact-q-convergence}--\eqref{eq:compact-Jq-convergence} have continuous $L_x^2$ representatives because their Duhamel sources lie in $L_t^1L_x^2$. Hence the convergence may be evaluated at $t=1$. The self-similar transformation gives
	\[
	h_\delta(1,v)
	=
	2^{1/2}\ee^{-\ii v^2}q_\delta(1,2v).
	\]
	Direct differentiation, using $J(1)=x+2\ii\partial_x$, gives
	\[
	\partial_vh_\delta(1,v)
	=
	-\ii2^{1/2}\ee^{-\ii v^2}
	\bigl(
	J(1)q_\delta
	\bigr)(1,2v).
	\]
	Therefore
	\begin{equation}\label{eq:initial-profile-quotient-convergence}
		h_\delta(1)
		\longrightarrow
		\dot a_0(1)
		\qquad
		\text{in }H_v^1,
	\end{equation}
	where
	\[
	\dot a_0(1,v)
	=
	2^{1/2}\ee^{-\ii v^2}\dot u_0(1,2v).
	\]
	The quotients are uniformly bounded in $H_v^1$. In particular, the derivative of $h_\delta(1)$ comes from $J(1)q_\delta$, not from an assumed physical derivative of $q_\delta$.
	
	We now estimate the quotient for $t\geq1$. The amplitude equation \eqref{eq:exact-amplitude-equation} and the mean-value formula give
	\begin{equation}\label{eq:exact-hdelta-equation}
		\begin{aligned}
			\ii\partial_th_\delta+\frac1{4t^2}\partial_v^2h_\delta
			=&\kappa 2^{-1-\delta/2}t^{-1-\delta/2}
			\int_0^1\left.D_z(\abs{z}^{2+\delta}z)
			\right|_{z=a_0+\theta(a_\delta-a_0)}h_\delta\,\dd\theta\\
			&+\kappa t^{-1}
			\frac{2^{-1-\delta/2}t^{-\delta/2}\abs{a_0}^{2+\delta}a_0
				-\frac12\abs{a_0}^2a_0}{\delta}.
		\end{aligned}
	\end{equation}
	
	The identity $2^{-1-\delta/2}t^{-\delta/2}=\frac12\exp(-\frac\delta2\log(2t))$ gives
	\begin{equation}\label{eq:time-coefficient-quotient-bound}
		\abs{
			\frac{2^{-1-\delta/2}t^{-\delta/2}-2^{-1}}{\delta}
		}
		\leq
		C(1+\log t).
	\end{equation}
	The exponent derivative obeys, for $0\leq p\leq\delta_0$,
	\begin{equation}\label{eq:nonlinearity-parameter-H1-pointwise}
		\abs{\abs{z}^{2+p}z\log\abs{z}}\leq C\abs{z}^{5/2},
		\qquad
		\abs{D_z(\abs{z}^{2+p}z\log\abs{z})}\leq C\abs{z}^{3/2},
	\end{equation}
	uniformly for $\abs z\leq1$. Indeed, $r^{1/2}(1+\abs{\log r})$ is bounded on $(0,1]$, so both expressions extend continuously at $z=0$. The mean-value formula, \eqref{eq:a-H1-coupled-improvement}, and \eqref{eq:uniform-amplitude-Linfty} give
	\begin{equation}\label{eq:parameter-source-H1}
		\norm{\frac{2^{-1-\delta/2}t^{-\delta/2}\abs{a_0(t)}^{2+\delta}a_0(t)-\frac12\abs{a_0(t)}^2a_0(t)}{\delta}}_{H^1}
		\leq
		C\varepsilon^{5/2}
		t^{C\varepsilon^2}
		(1+\log t).
	\end{equation}
	This estimate uses no second spatial derivative of $\abs{z}^{2+p}z\log\abs{z}$.
	
	By \Cref{lem:uniform-tame-estimate},
	\begin{align}
		\norm{\int_0^1
			\left.D_z(\abs{z}^{2+\delta}z)\right|_{z=a_0+\theta(a_\delta-a_0)}\,\dd\theta}_{L^\infty}
		&\leq C\varepsilon^2,
		\label{eq:linearized-coefficient-Linfty}\\
		\norm{\int_0^1
			\left.D_z^2(\abs{z}^{2+\delta}z)\right|_{z=a_0+\theta(a_\delta-a_0)}
			[\partial_v\bigl(a_0+\theta(a_\delta-a_0)\bigr)]\,\dd\theta}_{L^2}
		&\leq C\varepsilon^2t^{C\varepsilon^2}.
		\label{eq:linearized-coefficient-derivative}
	\end{align}
	The second integral is the $v$ derivative of the first and uses only second real derivatives of the nonlinearity.
	
	Since the linear Schr\"odinger term is skew-adjoint, \eqref{eq:linearized-coefficient-Linfty} and \eqref{eq:parameter-source-H1} give
	\begin{equation}\label{eq:H1-quotient-D0}
		\frac{\dd}{\dd t}
		\norm{h_\delta(t)}_{L^2}
		\leq
		C\varepsilon^2t^{-1}
		\norm{h_\delta(t)}_{L^2}
		+
		Ct^{-1+C\varepsilon^2}
		(1+\log t).
	\end{equation}
	Gr\"onwall's inequality gives
	\begin{equation}\label{eq:H1-quotient-D0-bound}
		\norm{h_\delta(t)}_{L^2}
		\leq
		Ct^{C\varepsilon^2}
		(1+\log t)^2.
	\end{equation}
	
	At order one, the product rule gives
	\[
	\begin{aligned}
	&\partial_v\int_0^1
	\left.D_z(\abs{z}^{2+\delta}z)\right|_{z=a_0+\theta(a_\delta-a_0)}
	h_\delta\,\dd\theta\\
	&\quad=\int_0^1
	\left.D_z(\abs{z}^{2+\delta}z)\right|_{z=a_0+\theta(a_\delta-a_0)}
	\partial_vh_\delta\,\dd\theta\\
	&\qquad+\int_0^1
	\left.D_z^2(\abs{z}^{2+\delta}z)\right|_{z=a_0+\theta(a_\delta-a_0)}
	[\partial_v\bigl(a_0+\theta(a_\delta-a_0)\bigr),h_\delta]\,\dd\theta.
	\end{aligned}
	\]
	Using
	\[
	\norm{h_\delta}_{L^\infty}
	\leq
	C
	\norm{h_\delta}_{L^2}^{1/2}
	\norm{h_\delta}_{H^1}^{1/2},
	\]
	we obtain
	\begin{equation}
		\frac{\dd}{\dd t}
		\norm{\partial_vh_\delta(t)}_{L^2}
		\leq
		C\varepsilon^2t^{-1}
		\norm{\partial_vh_\delta(t)}_{L^2}
		+
		C\varepsilon^2
		t^{-1+C\varepsilon^2}
		\norm{h_\delta}_{L^2}^{1/2}
		\norm{h_\delta}_{H^1}^{1/2}
		+
		Ct^{-1+C\varepsilon^2}
		(1+\log t).
		\label{eq:H1-quotient-D1}
	\end{equation}
	Using $\norm{h_\delta}_{H^1}\leq
	\norm{h_\delta}_{L^2}+\norm{\partial_vh_\delta}_{L^2}$ and Young's
	inequality gives
	\begin{equation}\label{eq:H1-quotient-D1-young}
		\frac{\dd}{\dd t}
		\norm{\partial_vh_\delta(t)}_{L^2}
		\leq
		C\varepsilon^2t^{-1}
		\norm{\partial_vh_\delta(t)}_{L^2}
		+
		C\varepsilon^2
		t^{-1+C\varepsilon^2}
		\norm{h_\delta(t)}_{L^2}
		+
		Ct^{-1+C\varepsilon^2}
		(1+\log t).
	\end{equation}
	Young's inequality doubles the polynomial exponent constant in the mixed term; this is included in the new absolute $C$ in \eqref{eq:H1-quotient-D1-young}. Substituting \eqref{eq:H1-quotient-D0-bound} adds its exponent constant, so both forcing terms are bounded by
\[
Ct^{-1+C\varepsilon^2}(1+\log t)^2.
\]
	Gr\"onwall's inequality therefore gives, with a possibly larger absolute exponent constant,
\[
\norm{\partial_vh_\delta(t)}_{L^2}
\leq Ct^{C\varepsilon^2}
\left(1+\int_1^t s^{-1}(1+\log s)^2\,\dd s\right)
\leq Ct^{C\varepsilon^2}(1+\log t)^3.
\]
	Here the Gr\"onwall factor and the forcing power are both bounded by powers of $t$ before integrating $s^{-1}(1+\log s)^2$; no negative power of $\varepsilon$ is introduced. Together with \eqref{eq:H1-quotient-D0-bound}, this proves the actual bound $\norm{h_\delta(t)}_{H^1}\leq Ct^{C\varepsilon^2}(1+\log t)^3$. Absorbing these exponent constants into the single final $C_0$ proves \eqref{eq:hdelta-H1-bound}.
	
	We finally prove the compact-time convergence of $h_\delta$. On every fixed interval $[1,T]$, \eqref{eq:hdelta-H1-bound} gives
	\[
	a_\delta-a_0
	=
	\delta h_\delta
	\longrightarrow0
	\qquad
	\text{in }H_v^1,
	\]
	and hence also in $L_v^\infty$. The state coefficients converge by the mean-value formula. For the differentiated coefficient, write
	\begin{align*}
		&\left.D_z^2(\abs{z}^{2+\delta}z)\right|_{z=a_0+\theta(a_\delta-a_0)}\partial_v\bigl(a_0+\theta(a_\delta-a_0)\bigr)
		-
		\left.D_z^2(\abs{z}^2z)\right|_{z=a_0}\partial_va_0\\
		&\quad=
		\left.D_z^2(\abs{z}^{2+\delta}z)\right|_{z=a_0+\theta(a_\delta-a_0)}
		\bigl(
		\partial_v\bigl(a_0+\theta(a_\delta-a_0)\bigr)-\partial_va_0
		\bigr)\\
		&\qquad+
		\bigl(
		\left.D_z^2(\abs{z}^{2+\delta}z)\right|_{z=a_0+\theta(a_\delta-a_0)}-\left.D_z^2(\abs{z}^2z)\right|_{z=a_0}
		\bigr)
		\partial_va_0.
	\end{align*}
	The first term tends to zero in $L_v^2$, since $a_0+\theta(a_\delta-a_0)\to a_0$ strongly in $H_v^1$ and $D_z^2(\abs{z}^{2+\delta}z)$ is uniformly bounded on the common bounded range. Moreover, $D_z^2(\abs{z}^{2+\delta}z)\to D_z^2(\abs{z}^2z)$ uniformly on bounded subsets of $\C$, and $a_0+\theta(a_\delta-a_0)\to a_0$ in $L_v^\infty$. Hence the coefficient in the second term tends to zero in $L_v^\infty$, and the second term tends to zero in $L_v^2$. The source converges in $H_v^1$ by \eqref{eq:nonlinearity-parameter-H1-pointwise}. The $H_v^1$ stability estimate for the difference now proves the compact-time convergence and \eqref{eq:linearized-amplitude-equation}.
\end{proof}

\subsection{Expansion after conjugation}
\label{subsec:conjugation-remainder-quotient}

Define
\begin{equation}\label{eq:kdelta-definition}
	k_\delta
	:=
	\frac{A_\delta-A_0}{\delta}
	=
	S(t)h_\delta,
\end{equation}
and recall from \eqref{eq:conjugation-remainder-definition} that
\begin{equation}\label{eq:Rsharp-along-solution}
	\mathcal R_\delta^\sharp(t)
	=
	\mathcal R_\delta(t,A_\delta(t)).
\end{equation}
By the unitarity of $S(t)$ on $H^1$ and the actual polynomial bound proved in \Cref{lem:H1-parameter-quotient},
\begin{equation}\label{eq:kdelta-H1-bound}
	\norm{k_\delta(t)}_{H^1}
	\leq
	Ct^{C\varepsilon^2}(1+\log t)^3.
\end{equation}

\begin{lemma}
	\label{lem:conjugation-remainder-quotient}
	Uniformly for $0<\delta\leq\delta_0$,
	\begin{equation}\label{eq:Rsharp-quotient-bound}
		\norm{
			\frac{\mathcal R_\delta^\sharp(t)-\mathcal R_0^\sharp(t)}{\delta}
		}_{L^2}
		\leq
		Ct^{-1/2+C_0\varepsilon^2}(1+\log t)^3.
	\end{equation}
\end{lemma}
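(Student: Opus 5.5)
Write $F_p(z):=\abs{z}^{2+p}z$ and use the decomposition \eqref{eq:conjugation-remainder-decomposition} for both exponents:
\[
\mathcal R_p^\sharp(t)=(S(t)-I)F_p(a_p)+F_p(a_p)-F_p(A_p),\qquad p\in\{0,\delta\}.
\]
Subtracting and dividing by $\delta$ gives two groups. The first group is
\[
(S(t)-I)\frac{F_\delta(a_\delta)-F_0(a_0)}{\delta}.
\]
The second group is
\[
\frac{[F_\delta(a_\delta)-F_\delta(A_\delta)]-[F_0(a_0)-F_0(A_0)]}{\delta}.
\]
The gain $t^{-1/2}$ must come from the small propagator in both groups. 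In the first group it enters directly through \eqref{eq:small-propagator-L2}. In the second it enters through the smallness of $a_p-A_p=(S(t)^{-1}-I)A_p$ in $L^2$.

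For the first group I apply \eqref{eq:small-propagator-L2}. This reduces matters to an $H^1$ bound on the quotient
\[
\frac{F_\delta(a_\delta)-F_0(a_0)}{\delta}
=\int_0^1 D_zF_\delta\big|_{z=a_0+\theta(a_\delta-a_0)}h_\delta\,\dd\theta
+\int_0^1 \abs{a_0}^{2+\theta\delta}a_0\log\abs{a_0}\,\dd\theta.
\]
This is exactly the structure already estimated in the proof of \Cref{lem:H1-parameter-quotient}. The $H^1$ norm of the first term is bounded by \eqref{eq:linearized-coefficient-Linfty}, \eqref{eq:linearized-coefficient-derivative}, Gagliardo--Nirenberg for $h_\delta$, and \eqref{eq:hdelta-H1-bound}. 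The second term is bounded by \eqref{eq:nonlinearity-parameter-H1-pointwise} together with \eqref{eq:a-H1-coupled-improvement}. The result is
\[
\norm{(S(t)-I)\tfrac{F_\delta(a_\delta)-F_0(a_0)}{\delta}}_{L^2}\le Ct^{-1/2}t^{C\varepsilon^2}(1+\log t)^3.
\]

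For the second group I write, by the mean-value formula,
\[
F_p(a_p)-F_p(A_p)=M_p\,(a_p-A_p),\qquad M_p:=\int_0^1D_zF_p\big|_{A_p+\theta(a_p-A_p)}\dd\theta,
\]
and split the quotient as
\[
M_\delta\,\frac{(a_\delta-A_\delta)-(a_0-A_0)}{\delta}+\frac{M_\delta-M_0}{\delta}(a_0-A_0).
\]
In the first piece, $\norm{M_\delta}_{L^\infty}\le C\varepsilon^2$ and
\[
\frac{(a_\delta-A_\delta)-(a_0-A_0)}{\delta}=(S(t)^{-1}-I)k_\delta .
\]
By \eqref{eq:small-propagator-L2} and \eqref{eq:kdelta-H1-bound}, this piece is $O(t^{-1/2+C\varepsilon^2}(1+\log t)^3)$ in $L^2$. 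In the second piece I place $a_0-A_0$ in $L^2$, where it is bounded by $Ct^{-1/2}\norm{A_0}_{H^1}$. This requires $\delta^{-1}(M_\delta-M_0)$ in $L^\infty$. Expanding once more, in the state through \eqref{eq:uniform-nonlinearity-derivative-Lipschitz} and in the parameter through $\partial_pD_zF_p$, gives a pointwise bound
\[
C\bigl(\abs{h_\delta}+\abs{k_\delta}\bigr)+C\sup_p\abs{z}^{2+p}(1+\abs{\log\abs z})\big|_{\text{bounded range}}.
\]
The $L^\infty$ norms of $h_\delta$ and $k_\delta$ are controlled by the $H^1$ bounds via Sobolev embedding, and the logarithmic term is bounded on the bounded range. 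Hence $\delta^{-1}(M_\delta-M_0)$ is $O(t^{C\varepsilon^2}(1+\log t)^3)$ in $L^\infty$, and the second piece obeys the same bound as the first.

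The main obstacle is the parameter derivative of $D_zF_p$, namely the term containing $\abs z^{1+p}\log\abs z$ times first-order coefficients. It must be uniformly bounded near $z=0$. It is, because the power $1+p\ge1$ absorbs the logarithm, as in \eqref{eq:neighboring-power-derivative}. The second-derivative Lipschitz bound must also be applied with uniform constants down to $\delta=0$. Collecting all exponent constants into $C_0$ then yields \eqref{eq:Rsharp-quotient-bound}.
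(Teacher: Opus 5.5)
Your argument is correct, but it orders the decomposition differently from the paper. The paper first splits $\delta^{-1}(\mathcal R_\delta^\sharp-\mathcal R_0^\sharp)$ into two parts. One is a state variation at the fixed exponent $\delta$, namely $\int_0^1 D_A\mathcal R_\delta(A_0+\theta(A_\delta-A_0))[k_\delta]\,\dd\theta$. The other is an explicit exponent variation $\int_0^1\partial_p\mathcal R_p(A_0)|_{p=\theta\delta}\,\dd\theta$ at the fixed state $A_0$. Only then does it extract the $t^{-1/2}$ gain. For $D_A\mathcal R_p[K]$ it uses a three-term splitting: the $(S-I)$ term, the coefficient difference $D_z(\abs{z}^{2+p}z)|_{a}-D_z(\abs{z}^{2+p}z)|_{A}$ placed in $L^2$, and the $k-K$ term. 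For $\partial_p\mathcal R_p(A_0)$ it uses a two-term splitting.

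You instead apply \eqref{eq:conjugation-remainder-decomposition} at each exponent first and take the exponent quotient afterwards. Your first group then needs only the $H^1$ bound of the source quotient $\delta^{-1}(F_\delta(a_\delta)-F_0(a_0))$, which is essentially the computation already done in the proof of \Cref{lem:H1-parameter-quotient}. Your second group needs an $L^\infty$ bound on $\delta^{-1}(M_\delta-M_0)$, multiplied by the $L^2$-small factor $a_0-A_0$, plus the term $M_\delta(S^{-1}-I)k_\delta$. That $L^\infty$ bound combines \eqref{eq:uniform-nonlinearity-derivative-Lipschitz}, using $z^\theta_\delta-z^\theta_0=\delta[(1-\theta)k_\delta+\theta h_\delta]$, with the boundedness of the mixed derivative $\partial_pD_zF_p$ on the bounded range. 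This is the same pointwise input the paper encodes in \eqref{eq:nonlinearity-parameter-H1-pointwise}.

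The tools and the final bound are therefore the same. Your route reuses the amplitude-quotient estimates more directly and keeps every source of the $t^{-1/2}$ gain in one place. The paper's state/exponent split has the advantage that its two pieces already have the form $D_A\mathcal R_0[\dot A_0]+\partial_p\mathcal R_p|_{p=0}$ of the limit $\dot{\mathcal R}_0^\sharp$ in \eqref{eq:Rsharp-dot-zero-definition}. That form is what \Cref{prop:Z-first-order-expansion} later uses for the fixed-time convergence of the integrand.
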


\begin{proof}
	For $0\leq p\leq\delta_0$, write
	\[
	\mathcal R_p(t,A)
	=
	S\bigl(\abs{S^{-1}A}^{2+p}S^{-1}A\bigr)-\abs{A}^{2+p}A,
	\]
	where $S=S(t)$. The decomposition is
	\begin{equation}\label{eq:Rsharp-state-parameter-decomposition}
		\frac{\mathcal R_\delta^\sharp-\mathcal R_0^\sharp}{\delta}
		=
		\frac{\mathcal R_\delta(A_\delta)-\mathcal R_\delta(A_0)}{\delta}
		+
		\frac{\mathcal R_\delta(A_0)-\mathcal R_0(A_0)}{\delta}.
	\end{equation}
	
	For the state variation, the mean-value formula gives
	\begin{equation}\label{eq:conjugation-remainder-state-mean-value}
		\frac{\mathcal R_\delta(A_\delta)-\mathcal R_\delta(A_0)}{\delta}
		=\int_0^1D_A\mathcal R_\delta
		\bigl(A_0+\theta(A_\delta-A_0)\bigr)[k_\delta]\,\dd\theta.
	\end{equation}
	For fixed $p$,
	\[
	D_A\mathcal R_p(A)[K]
	=
	S\bigl(\left.D_z(\abs{z}^{2+p}z)\right|_{z=a}k\bigr)-\left.D_z(\abs{z}^{2+p}z)\right|_{z=A}K,
	\]
	where $a=S^{-1}A$ and $k=S^{-1}K$. We use
	\begin{equation}\label{eq:conjugation-remainder-state-three-term-decomposition}
		\begin{aligned}
			D_A\mathcal R_p(A)[K]
			=&(S-I)\left[\left.D_z(\abs{z}^{2+p}z)\right|_{z=a}k\right]\\
			&+\left[\left.D_z(\abs{z}^{2+p}z)\right|_{z=a}
			-\left.D_z(\abs{z}^{2+p}z)\right|_{z=A}\right]k\\
			&+\left.D_z(\abs{z}^{2+p}z)\right|_{z=A}(k-K).
		\end{aligned}
	\end{equation}
	
	For the first term,
	\[
	\partial_v(\left.D_z(\abs{z}^{2+p}z)\right|_{z=a}k)
	=
	\left.D_z^2(\abs{z}^{2+p}z)\right|_{z=a}[\partial_va,k]
	+
	\left.D_z(\abs{z}^{2+p}z)\right|_{z=a}\partial_vk.
	\]
	Using $H^1\hookrightarrow L^\infty$, \eqref{eq:a-H1-coupled-improvement}, and \eqref{eq:kdelta-H1-bound}, we obtain
	\[
	\norm{\left.D_z(\abs{z}^{2+p}z)\right|_{z=a}k}_{H^1}
	\leq
	C\varepsilon^2
	t^{C\varepsilon^2}(1+\log t)^3.
	\]
	Hence \eqref{eq:small-propagator-L2} gives
	\begin{equation}\label{eq:conjugation-remainder-state-term-one}
		\norm{(S-I)[\left.D_z(\abs{z}^{2+p}z)\right|_{z=a}k]}_{L^2}
		\leq
		C\varepsilon^2
		t^{-1/2+C\varepsilon^2}(1+\log t)^3.
	\end{equation}
	
	For the middle term, we retain the full $t^{-1/2}$ gain by placing the coefficient difference in $L^2$, not in $L^\infty$. By \eqref{eq:uniform-nonlinearity-derivative-Lipschitz},
	\begin{equation}\label{eq:conjugation-remainder-coefficient-difference-L2}
		\norm{\left.D_z(\abs{z}^{2+p}z)\right|_{z=a}-\left.D_z(\abs{z}^{2+p}z)\right|_{z=A}}_{L^2}
		\leq
		C\varepsilon^{1+p}\norm{a-A}_{L^2}
		\leq
		C\varepsilon^{2+p}
		t^{-1/2+C\varepsilon^2}.
	\end{equation}
	Since
	\[
	\norm{k}_{L^\infty}
	\leq
	C\norm{K}_{H^1},
	\]
	we obtain
	\begin{equation}\label{eq:conjugation-remainder-state-term-two}
		\norm{[\left.D_z(\abs{z}^{2+p}z)\right|_{z=a}-\left.D_z(\abs{z}^{2+p}z)\right|_{z=A}]k}_{L^2}
		\leq
		C\varepsilon^2
		t^{-1/2+C\varepsilon^2}(1+\log t)^3.
	\end{equation}
	
	Finally,
	\[
	\norm{k-K}_{L^2}
	\leq
	Ct^{-1/2}\norm{K}_{H^1},
	\]
	so
	\begin{equation}\label{eq:conjugation-remainder-state-term-three}
		\norm{\left.D_z(\abs{z}^{2+p}z)\right|_{z=A}(k-K)}_{L^2}
		\leq
		C\varepsilon^2
		t^{-1/2+C\varepsilon^2}(1+\log t)^3.
	\end{equation}
	Equations \eqref{eq:conjugation-remainder-state-term-one}-- \eqref{eq:conjugation-remainder-state-term-three} control the state variation in \eqref{eq:Rsharp-state-parameter-decomposition}.
	
	For the explicit parameter variation, differentiate the nonlinearity directly.
	Then
	\begin{equation}\label{eq:conjugation-remainder-explicit-parameter-derivative}
		\partial_p\mathcal R_p(A)
		=
		S\bigl(\abs{S^{-1}A}^{2+p}S^{-1}A\log\abs{S^{-1}A}\bigr)-\abs{A}^{2+p}A\log\abs{A}.
	\end{equation}
	By \eqref{eq:nonlinearity-parameter-H1-pointwise}, both $\abs{z}^{2+p}z\log\abs{z}$ and $D_z(\abs{z}^{2+p}z\log\abs{z})$ extend continuously at zero.
	The mean-value formula gives
	\[
	\frac{\mathcal R_\delta(A_0)-\mathcal R_0(A_0)}{\delta}
	=
	\int_0^1
	\partial_p\mathcal R_p(A_0)\big|_{p=\theta\delta}
	\,\dd\theta.
	\]
	Writing $a_0=S^{-1}A_0$, decompose
	\[
	\partial_p\mathcal R_p(A_0)
	=
	(S-I)\bigl(\abs{a_0}^{2+p}a_0\log\abs{a_0}\bigr)
	+
	\abs{a_0}^{2+p}a_0\log\abs{a_0}-\abs{A_0}^{2+p}A_0\log\abs{A_0}.
	\]
	Equation \eqref{eq:nonlinearity-parameter-H1-pointwise} gives
	\[
	\norm{\abs{a_0}^{2+p}a_0\log\abs{a_0}}_{H^1}
	\leq
	C\varepsilon^{5/2}
	t^{C\varepsilon^2}.
	\]
	Hence
	\begin{align}
		\norm{(S-I)\bigl(\abs{a_0}^{2+p}a_0\log\abs{a_0}\bigr)}_{L^2}
		&\leq
		C\varepsilon^{5/2}
		t^{-1/2+C\varepsilon^2},
		\label{eq:conjugation-remainder-parameter-term-one}\\
		\norm{\abs{a_0}^{2+p}a_0\log\abs{a_0}-\abs{A_0}^{2+p}A_0\log\abs{A_0}}_{L^2}
		&\leq
		C\varepsilon^{3/2}\norm{a_0-A_0}_{L^2}
		\leq
		C\varepsilon^{5/2}
		t^{-1/2+C\varepsilon^2}.
		\label{eq:conjugation-remainder-parameter-term-two}
	\end{align}
	The products above add the absolute exponent constants for the amplitude and $k_\delta$; each term retains the $t^{-1/2}$ gain. Combining the state and explicit parameter estimates gives
\[
\norm{\frac{\mathcal R_\delta^\sharp(t)-\mathcal R_0^\sharp(t)}{\delta}}_{L^2}
\leq Ct^{-1/2+C\varepsilon^2}(1+\log t)^3.
\]
	Absorbing this exponent constant into $C_0$ proves \eqref{eq:Rsharp-quotient-bound}. Only $H^1$ control of $k_\delta$ has been used.
\end{proof}

\subsection{Expansion of \texorpdfstring{$Z_\delta$}{Z}}
\label{subsec:Z-expansion}

We first differentiate the evolving phase, then pass to the limit in $b_\delta$.

\begin{lemma}
	\label{lem:phase-quotient-H1}
	Uniformly for $0<\delta\leq\delta_0$,
	\begin{equation}\label{eq:phase-quotient-H1-bound}
		\norm{\frac{\Theta_\delta(t)-\Theta_0(t)}{\delta}}_{H^1}
		\leq
		Ct^{C_0\varepsilon^2}(1+\log t)^4.
	\end{equation}
	For every fixed $t$, $\frac{\Theta_\delta(t)-\Theta_0(t)}{\delta}$ converges in $H^1$ to
	\begin{equation}\label{eq:Theta-dot-formula}
		\dot\Theta_0(t)
		=
		\frac{\kappa}{2}
		\int_1^t
		s^{-1}
		\left[
		2\operatorname{Re}(\overline{A_0}\dot A_0)
		+
		\abs{A_0}^2
		\left(
		\log\abs{A_0}
		-
		\frac12\log(2s)
		\right)
		\right]
		\,\dd s,
	\end{equation}
	where
	\[
	\dot A_0(s):=S(s)\dot a_0(s).
	\]
\end{lemma}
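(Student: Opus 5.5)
The plan is to write the phase quotient as a time integral of a pointwise quotient and split it into a state variation and an explicit parameter variation, in the same way as \Cref{lem:conjugation-remainder-quotient}. From \eqref{eq:Theta-delta-definition},
\[
\frac{\Theta_\delta(t)-\Theta_0(t)}{\delta}
=\kappa\int_1^t s^{-1}\left[
2^{-1-\delta/2}s^{-\delta/2}\,\frac{\abs{A_\delta}^{2+\delta}-\abs{A_0}^{2+\delta}}{\delta}
+\frac{2^{-1-\delta/2}s^{-\delta/2}\abs{A_0}^{2+\delta}-\frac12\abs{A_0}^2}{\delta}
\right]\dd s .
\]
In the first bracket I apply the mean-value formula in the state. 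This gives $\int_0^1 D_z(\abs{z}^{2+\delta})|_{z=A_0+\theta(A_\delta-A_0)}[k_\delta]\,\dd\theta$, with $k_\delta$ as in \eqref{eq:kdelta-definition}. In the second bracket I apply it in the parameter. The derivative is
\[
\partial_p\bigl(2^{-1-p/2}s^{-p/2}\abs{z}^{2+p}\bigr)
=2^{-1-p/2}s^{-p/2}\abs{z}^{2+p}\Bigl(\log\abs{z}-\tfrac12\log(2s)\Bigr),
\]
and it is evaluated at $p=\theta\delta$.

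\textbf{Step 1: the $H^1$ bound.}
\begin{itemize}
\item For the state term, $\abs{D_z\abs{z}^{2+\delta}}\le C\abs z^{1+\delta}$ and $\abs{D_z^2\abs z^{2+\delta}}\le C$ on the bounded range, uniformly in $\delta$ (as in \Cref{lem:frozen-density-difference}).
\item With $\norm{A_p}_{L^\infty}\le C\varepsilon$, the product rule, $H^1\hookrightarrow L^\infty$, \eqref{eq:a-H1-coupled-improvement} (transferred to $A_p$ by unitarity of $S$), and \eqref{eq:kdelta-H1-bound}, the state integrand is bounded in $H^1$ by $C\varepsilon t^{C\varepsilon^2}(1+\log s)^3$.
\item For the parameter term, $r^{2+p}(1+\abs{\log r})\le Cr^{3/2}$ and its derivative bound $\le Cr^{1/2}$ hold on $r\le1$, as in \eqref{eq:nonlinearity-parameter-H1-pointwise}. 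Together with $\abs{\log(2s)}\le C(1+\log s)$, this gives an $H^1$ bound $C\varepsilon^{3/2}s^{C\varepsilon^2}(1+\log s)$.
\item Integrating against $s^{-1}$ produces at most $t^{C\varepsilon^2}(1+\log t)^4$. Enlarging $C_0$ then gives \eqref{eq:phase-quotient-H1-bound}.
\end{itemize}

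\textbf{Step 2: convergence for fixed $t$.} By \Cref{lem:H1-parameter-quotient}, $h_\delta\to\dot a_0$ in $C([1,t];H^1)$. Since $S(s)$ is unitary on $H^1$ and strongly continuous in $s$, it follows that $k_\delta\to\dot A_0$ in $C([1,t];H^1)$, and $A_\delta-A_0=\delta k_\delta\to0$ there.

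For the state term, I argue as in the compact-time convergence step of \Cref{lem:H1-parameter-quotient}.
\begin{itemize}
\item The coefficient $D_z\abs z^{2+\delta}$ converges to $D_z\abs z^2$ uniformly on bounded sets.
\item For the $v$-derivative, the $D_z^2$-coefficient term is split into a difference of $\partial_v$ of the states, which goes to zero in $L^2$, and a coefficient difference times $\partial_vA_0$.
\item \emph{Main obstacle.} At $\delta=0$, $D_z^2\abs z^{2+\delta}$ does converge to the constant $D_z^2\abs z^2$ on bounded sets away from $0$. Near $z=0$ it does not converge uniformly, since $\abs z^\delta\not\to1$ there. I will handle this by dominated convergence: the coefficient is uniformly bounded, it converges pointwise wherever $A_0\ne0$, and $\partial_vA_0=0$ a.e.\ on $\{A_0=0\}$. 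This gives $L^2$ convergence of that term.
\item The limit of the state term is $D_z\abs z^2|_{A_0}[\dot A_0]=2\operatorname{Re}(\overline{A_0}\dot A_0)$.
\end{itemize}

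For the parameter term, the integrand converges pointwise as $\theta\delta\to0$. The uniform majorants above (value $\le C\abs{A_0}^{3/2}$, derivative $\le C\abs{A_0}^{1/2}\abs{\partial_vA_0}$) and dominated convergence give convergence in $H^1$ to $\frac12\abs{A_0}^2(\log\abs{A_0}-\frac12\log(2s))$.

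Finally, the $s$-integrands are dominated uniformly in $\delta$ on $[1,t]$ by Step 1, so dominated convergence in $s$ yields \eqref{eq:Theta-dot-formula}.
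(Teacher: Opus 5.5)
Your proposal is correct and follows the paper's proof. Both use the same split of the phase quotient into a state part and an explicit exponent part, the same $H^1$ bounds integrated against $s^{-1}$, and the same key fact that $\partial_v A_0=0$ a.e.\ on $\{A_0=0\}$, which handles the non-uniform convergence of $D_z^2(\abs{z}^{2+\delta})$ at $z=0$. The only difference is cosmetic: the paper separates off the $O(\delta)$ part of the explicit Hessian and uses a level-set cutoff $\{\abs{A_0}\le\rho\}$, whereas you apply dominated convergence directly using pointwise convergence off the zero set, which is equally valid.
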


\begin{proof}
	The phase formula gives
	\begin{equation}\label{eq:phase-quotient-integral-formula}
		\begin{aligned}
			\frac{\Theta_\delta(t)-\Theta_0(t)}{\delta}
			=&\kappa\int_1^t2^{-1-\delta/2}s^{-1-\delta/2}
			\frac{\abs{A_\delta(s)}^{2+\delta}-\abs{A_0(s)}^{2+\delta}}{\delta}\,\dd s\\
			&+\kappa\int_1^ts^{-1}
			\frac{2^{-1-\delta/2}s^{-\delta/2}\abs{A_0(s)}^{2+\delta}
				-\frac12\abs{A_0(s)}^2}{\delta}\,\dd s.
		\end{aligned}
	\end{equation}
	On the bounded range of the amplitudes,
	\[
	\abs{D_z(\abs{z}^{2+p})}
	\leq
	C\abs{z}^{1+p},
	\qquad
	\abs{D_z^2(\abs{z}^{2+p})}
	\leq
	C\abs{z}^{p}
	\leq C.
	\]
	The state quotient in the first integral of \eqref{eq:phase-quotient-integral-formula} is an average of $\left.D_z(\abs{z}^{2+\delta})\right|_{z=A_0+\theta(A_\delta-A_0)}k_\delta$. Using \eqref{eq:a-H1-coupled-improvement} and \eqref{eq:kdelta-H1-bound}, the differentiated product adds their absolute exponent constants. Hence
	\begin{equation}\label{eq:phase-state-quotient-H1}
		\norm{\left.D_z(\abs{z}^{2+\delta})\right|_{z=A_0+\theta(A_\delta-A_0)}k_\delta}_{H^1}
		\leq
		C\varepsilon
		t^{C\varepsilon^2}(1+\log t)^3.
	\end{equation}
	
	The explicit derivative
	\[
	\partial_p(\abs{z}^{2+p})
	=
	\abs{z}^{2+p}\log\abs{z}
	\]
	and its first real derivative extend continuously at zero, since
	\[
	\abs{D_z(\abs{z}^{2+p}\log\abs{z})}
	\leq
	C\abs{z}^{1+p}
	\left(
	1+\abs{\log\abs z}
	\right).
	\]
	Together with \eqref{eq:time-coefficient-quotient-bound}, this gives
	\begin{equation}\label{eq:phase-explicit-quotient-H1}
		\norm{
			\frac{2^{-1-\delta/2}t^{-\delta/2}\abs{A_0(t)}^{2+\delta}
				-\frac12\abs{A_0(t)}^2}{\delta}}_{H^1}
		\leq Ct^{C\varepsilon^2}(1+\log t).
	\end{equation}
	Integrating \eqref{eq:phase-state-quotient-H1} and \eqref{eq:phase-explicit-quotient-H1} against $s^{-1}$ gives
\[
\norm{\frac{\Theta_\delta(t)-\Theta_0(t)}{\delta}}_{H^1}
\leq Ct^{C\varepsilon^2}\int_1^t s^{-1}(1+\log s)^3\,\dd s
\leq Ct^{C\varepsilon^2}(1+\log t)^4.
\]
	Absorbing this absolute exponent constant into $C_0$ proves \eqref{eq:phase-quotient-H1-bound}.
	
	We now prove the fixed-time $H^1$ convergence, including the zero set of $A_0$. On every fixed interval $[1,T]$, \Cref{lem:H1-parameter-quotient} gives
	\begin{equation}\label{eq:kdelta-compact-H1-convergence}
		k_\delta
		\longrightarrow
		\dot A_0
		\quad\text{in }C([1,T];H^1),
		\qquad
		A_\delta
		\longrightarrow
		A_0
		\quad\text{in }C([1,T];H^1).
	\end{equation}
	For the state quotient, the mean-value formula gives
	\[
	\frac{
		\abs{A_\delta}^{2+\delta}
		-
		\abs{A_0}^{2+\delta}
	}{\delta}
	=
	\int_0^1
	\left.D_z(\abs{z}^{2+\delta})\right|_{z=A_0+\theta\delta k_\delta}k_\delta
	\,\dd\theta.
	\]
	Since $D_z(\abs{z}^{2+\delta})\to D_z(\abs{z}^2)$ uniformly on bounded subsets of $\C$, \eqref{eq:kdelta-compact-H1-convergence} gives
	\[
	\frac{\abs{A_\delta}^{2+\delta}-\abs{A_0}^{2+\delta}}{\delta}
	\longrightarrow
	\left.D_z(\abs{z}^2)\right|_{z=A_0}\dot A_0
	\qquad
	\text{in }L^2.
	\]
	
	The same argument treats the term containing
	$\left.D_z(\abs{z}^{2+\delta})\right|_{z=A_0+\theta\delta k_\delta}\partial_vk_\delta$ in
	\[
	\begin{aligned}
	&\partial_v\left(\frac{\abs{A_\delta}^{2+\delta}-\abs{A_0}^{2+\delta}}{\delta}\right)\\
	&\quad=
	\int_0^1
	\left.D_z^2(\abs{z}^{2+\delta})\right|_{z=A_0+\theta\delta k_\delta}
	[\partial_v(A_0+\theta\delta k_\delta),k_\delta]\,\dd\theta\\
	&\qquad+\int_0^1
	\left.D_z(\abs{z}^{2+\delta})\right|_{z=A_0+\theta\delta k_\delta}\partial_vk_\delta
	\,\dd\theta.
	\end{aligned}
	\]
	It remains to treat the Hessian term. For $z\neq0$, its
	real Hessian is
	\begin{equation}\label{eq:density-exact-real-Hessian}
		D_z^2(\abs{z}^{2+p})[h,k]
		=
		(2+p)\abs{z}^{p}\operatorname{Re}(\overline h k)
		+
		p(2+p)\abs{z}^{p-2}
		\operatorname{Re}(\overline z h)
		\operatorname{Re}(\overline z k).
	\end{equation}
	The second term in \eqref{eq:density-exact-real-Hessian}, evaluated at $p=\delta$, has $L^2$ norm at most
	\[
	C\delta
	\norm{\partial_v(A_0+\theta\delta k_\delta)}_{L^2}
	\norm{k_\delta}_{L^\infty},
	\]
	and therefore tends to zero. This estimate remains valid when $A_0+\theta\delta k_\delta=0$ by defining that term to be zero.
	
	It remains to pass to the limit in the first term of \eqref{eq:density-exact-real-Hessian}. The differences containing $\partial_v(A_0+\theta\delta k_\delta)-\partial_vA_0$ or $k_\delta-\dot A_0$ tend to zero in $L^2$, by \eqref{eq:kdelta-compact-H1-convergence} and
	$H^1\hookrightarrow L^\infty$. After those differences and the factor $2+\delta$ are removed, the remaining expression is bounded by a constant times
	\begin{equation}\label{eq:zero-set-multiplier-term}
		\left(
		\abs{A_0+\theta\delta k_\delta}^\delta-1
		\right)
		\partial_vA_0.
	\end{equation}
	
	Fix $s\in[1,T]$ and a level $\rho>0$. On $\{\abs{A_0(s)}>\rho\}$, the uniform convergence in \eqref{eq:kdelta-compact-H1-convergence} gives
	\[
	\abs{A_0+\theta\delta k_\delta}
	\geq
	\frac{\rho}{2},
	\]
	uniformly in $0\leq\theta\leq1$, once $\delta$ is small. Hence
	\[
	\sup_{\{\abs{A_0(s)}>\rho\},\,0\leq\theta\leq1}
	\abs{
		\abs{A_0+\theta\delta k_\delta}^\delta-1
	}
	\longrightarrow0.
	\]
	On $\{\abs{A_0(s)}\leq\rho\} $, after decreasing $\varepsilon_0$ so that the common $L^\infty$ bound is at most one, the multiplier in \eqref{eq:zero-set-multiplier-term} has absolute value at most one.
	Moreover, an $H^1(\R)$ function has a locally absolutely continuous representative, and its derivative vanishes almost everywhere on each level set. Thus
	\[
	\partial_vA_0(s)=0
	\qquad
	\text{a.e. on }\{A_0(s)=0\},
	\]
	and dominated convergence gives
	\[
	\norm{
		\partial_vA_0(s)
		\mathbf 1_{\{\abs{A_0(s)}\leq\rho\}}
	}_{L^2}
	\longrightarrow0
	\qquad
	(\rho\downarrow0).
	\]
	Taking first $\delta\to0$ with $\rho$ fixed and then $\rho\downarrow0$ proves that \eqref{eq:zero-set-multiplier-term} tends to zero in $L^2$, uniformly in $\theta$. Consequently, for every fixed $s$,
	\[
	\frac{\abs{A_\delta(s)}^{2+\delta}-\abs{A_0(s)}^{2+\delta}}{\delta}
	\longrightarrow
	\left.D_z(\abs{z}^2)\right|_{z=A_0(s)}\dot A_0(s)
	=
	2\operatorname{Re}
	\left(
	\overline{A_0(s)}\dot A_0(s)
	\right)
	\qquad
	\text{in }H^1.
	\]
	This argument avoids uniform convergence of $D_z^2(\abs{z}^{2+\delta})$ at $z=0$, which fails.
	
	For the explicit exponent quotient,
	\[
	\partial_p(\abs{z}^{2+p})
	=
	\abs{z}^{2+p}\log\abs{z},
	\qquad
	\abs{D_z(\abs{z}^{2+p}\log\abs{z})}
	\leq
	C\abs{z}^{1+p}
	\left(
	1+\abs{\log\abs z}
	\right).
	\]
	The mean-value formula and dominated convergence give
	\[
	\frac{
		2^{-1-\delta/2}s^{-\delta/2}\abs{A_0(s)}^{2+\delta}
		-
		2^{-1}\abs{A_0(s)}^{2}
	}{\delta}
	\longrightarrow
	\frac12\abs{A_0(s)}^2
	\left(
	\log\abs{A_0(s)}
	-
	\frac12\log(2s)
	\right)
	\qquad
	\text{in }H^1.
	\]
	The preceding two convergences are dominated on $[1,t]$ by \eqref{eq:phase-state-quotient-H1} and \eqref{eq:phase-explicit-quotient-H1}. Bochner dominated convergence in \eqref{eq:phase-quotient-integral-formula} therefore gives
	\[
	\frac{\Theta_\delta(t)-\Theta_0(t)}{\delta}
	\longrightarrow
	\dot\Theta_0(t)
	\qquad
	\text{in }H^1
	\]
	for every fixed $t$, with $\dot\Theta_0$ given by \eqref{eq:Theta-dot-formula}.
\end{proof}

\begin{proposition}
	\label{prop:Z-first-order-expansion}
	There exists $\dot Z_0\in L^2(\R)$ such that
	\begin{equation}\label{eq:Z-first-order-section-five}
		\frac{Z_\delta-Z_0}{\delta}
		\longrightarrow
		\dot Z_0
		\qquad
		\text{in }L^2.
	\end{equation}
	More precisely, define
	\begin{align}
		\dot{\mathcal R}_0^\sharp(t)
		&:=
		D_A\mathcal R_0(t,A_0(t))[\dot A_0(t)]
		+
		\left.
		\partial_p\mathcal R_p(t,A_0(t))
		\right|_{p=0},
		\label{eq:Rsharp-dot-zero-definition}\\
		\dot b_0(t)
		&:=
		\ee^{\ii\Theta_0(t)}
		\left(
		\dot A_0(t)
		+
		\ii\dot\Theta_0(t)A_0(t)
		\right).
		\label{eq:b-dot-zero-definition}
	\end{align}
	Then
	\begin{equation}\label{eq:Zdot-formula}
		\dot Z_0
		=
		S(1)\dot a_0(1)
		-\ii\kappa
		\int_1^\infty
		t^{-1}
		\left[
		\frac12
		\ee^{\ii\Theta_0}\dot{\mathcal R}_0^\sharp
		+
		\frac{\ii}{2}
		\dot\Theta_0
		\ee^{\ii\Theta_0}\mathcal R_0^\sharp
		-
		\frac14
		\log(2t)
		\ee^{\ii\Theta_0}\mathcal R_0^\sharp
		\right]
		\,\dd t.
	\end{equation}
\end{proposition}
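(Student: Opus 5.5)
We work with the quotient $g_\delta(t):=(b_\delta(t)-b_0(t))/\delta$ of the phase-removed amplitude. The plan is to show two things: $g_\delta(t)\to\dot b_0(t)$ in $L^2$ for each fixed $t$, and the tail $g_\delta(t)-(Z_\delta-Z_0)/\delta$ is small in $L^2$ uniformly in $\delta$ as $t\to\infty$. A standard $3\varepsilon$ interchange of limits then gives \eqref{eq:Z-first-order-section-five}, with $\dot Z_0=\lim_{t\to\infty}\dot b_0(t)$.

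\textbf{Fixed-time convergence.} Write
\[
b_\delta-b_0=\ee^{\ii\Theta_\delta}(A_\delta-A_0)+(\ee^{\ii\Theta_\delta}-\ee^{\ii\Theta_0})A_0 .
\]
By \Cref{lem:H1-parameter-quotient}, $k_\delta\to\dot A_0$ in $H^1$ at fixed $t$. By \Cref{lem:phase-quotient-H1}, $(\Theta_\delta-\Theta_0)/\delta\to\dot\Theta_0$ in $H^1$; in particular the convergence holds in $L^\infty$, and $\Theta_\delta-\Theta_0\to0$ uniformly. Combining these, $g_\delta(t)\to\dot b_0(t)$ in $L^2$, which matches \eqref{eq:b-dot-zero-definition}.

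\textbf{Uniform tail.} Integrate \eqref{eq:exact-b-equation} from $t$ to $\infty$ and form the quotient:
\[
\frac{Z_\delta-Z_0}{\delta}-g_\delta(t)=-\ii\kappa\int_t^\infty\frac{2^{-1-\delta/2}\tau^{-1-\delta/2}\ee^{\ii\Theta_\delta}\mathcal R_\delta^\sharp-\frac12\tau^{-1}\ee^{\ii\Theta_0}\mathcal R_0^\sharp}{\delta}\,\dd\tau .
\]
We split the integrand into three pieces and bound each in $L^2$:
\begin{itemize}
\item[(i)] The coefficient quotient, controlled by \eqref{eq:time-coefficient-quotient-bound} times $\norm{\mathcal R_\delta^\sharp}_{L^2}\le C\varepsilon^3\tau^{-1/2+C_0\varepsilon^2}$.
\item[(ii)] The phase quotient $(\ee^{\ii\Theta_\delta}-\ee^{\ii\Theta_0})/\delta$. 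Its $L^\infty$ norm is at most $\norm{(\Theta_\delta-\Theta_0)/\delta}_{H^1}\le C\tau^{C_0\varepsilon^2}(1+\log\tau)^4$ by \Cref{lem:phase-quotient-H1}, and it multiplies $\mathcal R_0^\sharp$ in $L^2$.
\item[(iii)] The remainder quotient, bounded by \Cref{lem:conjugation-remainder-quotient}.
\end{itemize}
Each piece is $O(\tau^{-3/2+C_0\varepsilon^2}(1+\log\tau)^4)$, which is integrable with tail $O(t^{-1/4})$ under \eqref{eq:section-four-smallness}. This gives the uniform tail estimate.

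The key point is that only $L^2$ norms are needed here. The $L^2$ bound of $\mathcal R^\sharp$ carries the full $t^{-1/2}$ gain, which dominates all the logarithmic and polynomial losses of the quotients. By contrast, the $H^{3/4}$ route has only $t^{-1/8}$ of decay and would not close against the $t^{C\varepsilon^2}$ losses. I expect this step to be the main obstacle: the quotient bounds in \Cref{lem:phase-quotient-H1,lem:conjugation-remainder-quotient} must be paired with the $L^2$ decay of $\mathcal R^\sharp$ so that the growth $(1+\log\tau)^4\tau^{C_0\varepsilon^2}$ is absorbed.

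\textbf{The formula \eqref{eq:Zdot-formula}.} Pass to the limit $\delta\to0$ under the integral on $[1,\infty)$ in the Duhamel form
\[
g_\delta(\infty)=g_\delta(1)+\int_1^\infty(\cdots)\,\dd\tau,
\]
using dominated convergence with the majorant just constructed.
\begin{itemize}
\item The initial term $g_\delta(1)=k_\delta(1)$ converges to $S(1)\dot a_0(1)$, since $\Theta_\delta(1)=0$.
\item The pointwise limit of the integrand splits into three terms. The time-coefficient quotient gives $-\frac14\log(2t)$ multiplying $\ee^{\ii\Theta_0}\mathcal R_0^\sharp$. The phase quotient gives $\frac{\ii}{2}\dot\Theta_0\ee^{\ii\Theta_0}\mathcal R_0^\sharp$. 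The remainder quotient gives $\frac12\ee^{\ii\Theta_0}\dot{\mathcal R}_0^\sharp$.
\end{itemize}
For the last term we need fixed-time $L^2$ convergence of $(\mathcal R_\delta^\sharp-\mathcal R_0^\sharp)/\delta$ to $\dot{\mathcal R}_0^\sharp$. We obtain it from the decomposition \eqref{eq:Rsharp-state-parameter-decomposition}. The state part uses $k_\delta\to\dot A_0$ in $H^1$ and continuity of $D_z(\abs{z}^{2+p}z)$ uniformly on bounded sets. The parameter part uses continuity in $p$ of $\abs z^{2+p}z\log\abs z$ together with the majorant \eqref{eq:nonlinearity-parameter-H1-pointwise}, giving convergence by dominated convergence. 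Since the tail majorant is integrable, these pointwise limits assemble into \eqref{eq:Zdot-formula}, and $\dot Z_0\in L^2$ follows.
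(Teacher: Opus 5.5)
Your proposal is correct and is essentially the paper's proof. It uses the same quotient $g_\delta=(b_\delta-b_0)/\delta$ and the same three-term splitting of $\partial_tg_\delta$ into coefficient, phase, and remainder quotients. Each term is bounded in $L^2$ by pairing the $t^{-1/2}$ gain of $\mathcal R^\sharp$ with the polynomial--logarithmic growth from \Cref{lem:conjugation-remainder-quotient,lem:phase-quotient-H1}, and Bochner dominated convergence in $g_\delta(1)+\int_1^\infty\partial_tg_\delta\,\dd t$ then gives \eqref{eq:Zdot-formula}.

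Your aside that an $H^{3/4}$ version ``would not close'' is overstated. Interpolating the available $L^2$ and $H^1$ quotient bounds gives $H^{3/4}$ forcing of order $t^{-9/8+C\varepsilon^2}$ up to logarithms, which is still integrable. Nothing in your argument depends on this remark.
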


\begin{proof}
	Set
	\[
	g_\delta
	:=
	\frac{b_\delta-b_0}{\delta}.
	\]
	We first obtain an integrable bound for $\partial_tg_\delta$, uniformly in $\delta$. The equation \eqref{eq:exact-b-equation} gives
	\begin{equation}\label{eq:gdelta-exact-equation}
		\begin{aligned}
			\partial_tg_\delta
			=-\ii\kappa t^{-1}\Biggl[&
			2^{-1-\delta/2}t^{-\delta/2}\ee^{\ii\Theta_\delta}
			\frac{\mathcal R_\delta^\sharp-\mathcal R_0^\sharp}{\delta}\\
			&+2^{-1-\delta/2}t^{-\delta/2}
			\frac{\ee^{\ii\Theta_\delta}-\ee^{\ii\Theta_0}}{\delta}
			\mathcal R_0^\sharp
			+\frac{2^{-1-\delta/2}t^{-\delta/2}-\frac12}{\delta}
			\ee^{\ii\Theta_0}\mathcal R_0^\sharp\Biggr].
		\end{aligned}
	\end{equation}
	By the proof of \eqref{eq:conjugation-remainder-L2} and \eqref{eq:a-H1-coupled-improvement},
	\[
	\norm{\mathcal R_0^\sharp(t)}_{L^2}
	\leq
	C\varepsilon^3
	t^{-1/2+C\varepsilon^2}.
	\]
	Furthermore,
	\[
	\abs{
		\frac{
			\ee^{\ii\Theta_\delta}
			-
			\ee^{\ii\Theta_0}
		}{\delta}
	}
	\leq
	\abs{\frac{\Theta_\delta-\Theta_0}{\delta}}.
	\]
	Use the actual polynomial bounds proved in \Cref{lem:conjugation-remainder-quotient,lem:phase-quotient-H1}, $H^1(\R)\hookrightarrow L^\infty(\R)$, and \eqref{eq:time-coefficient-quotient-bound}.
	The three terms inside the brackets in \eqref{eq:gdelta-exact-equation} have $L^2$ bounds, respectively,
\[
Ct^{-1/2+C\varepsilon^2}(1+\log t)^3,\qquad
Ct^{-1/2+C\varepsilon^2}(1+\log t)^4,\qquad
Ct^{-1/2+C\varepsilon^2}(1+\log t).
\]
	For the middle term, the exponent constant is the sum of those for the phase quotient and the remainder. Multiplying by $t^{-1}$ and enlarging the absolute exponent constant gives
	\begin{equation}\label{eq:gdelta-time-integrable}
		\norm{\partial_tg_\delta(t)}_{L^2}
		\leq
		Ct^{-3/2+C\varepsilon^2}
		(1+\log t)^5.
	\end{equation}
	All polynomial exponent constants obtained in these proofs are absolute and independent of $C_0$ and $\varepsilon_0$. There are only finitely many, including the sums in the products above. Choose $C_0$ once, larger than all of them, and then decrease $\varepsilon_0$ to satisfy \eqref{eq:section-four-smallness} and the earlier smallness conditions. Every preceding absorption into $C_0$ is thereby justified. In particular, $C\varepsilon^2\leq C_0\varepsilon_0^2\leq1/32<1/4$, so
\[
\norm{\partial_tg_\delta(t)}_{L^2}
	\leq
	Ct^{-5/4}(1+\log t)^5,
	\]
	which is integrable on $[1,\infty)$.
	
	For every fixed $t$, \Cref{lem:H1-parameter-quotient,lem:conjugation-remainder-quotient,lem:phase-quotient-H1} show that the integrand in \eqref{eq:gdelta-exact-equation} converges in $L^2$ to the time derivative of \eqref{eq:b-dot-zero-definition}. Since
	\[
	g_\delta(1)
	=
	S(1)h_\delta(1)
	\longrightarrow
	S(1)\dot a_0(1)
	\qquad
	\text{in }L^2,
	\]
	Bochner dominated convergence gives
	\[
	\frac{Z_\delta-Z_0}{\delta}
	=
	g_\delta(1)
	+
	\int_1^\infty
	\partial_tg_\delta(t)
	\,\dd t
	\longrightarrow
	\dot Z_0
	\]
	with $\dot Z_0$ given by \eqref{eq:Zdot-formula}.
\end{proof}

\subsection{Expansion of \texorpdfstring{$\Gamma_\delta$}{Gamma}}
\label{subsec:phase-defect-quotient}

Set
\begin{equation}\label{eq:ydelta-definition}
	y_\delta(t)
	:=
	b_\delta(t)-Z_\delta.
\end{equation}
By \eqref{eq:b-Z-tail-Hs},
\begin{equation}\label{eq:ydelta-Hs-tail}
	\norm{y_\delta(t)}_{H^{3/4}}
	+
	\norm{y_\delta(t)}_{L^\infty}
	\leq
	Ct^{-1/16}.
\end{equation}
Since
\[
\frac{y_\delta(t)-y_0(t)}{\delta}
=
g_\delta(t)-\frac{Z_\delta-Z_0}{\delta}
=
-\int_t^\infty \partial_s g_\delta(s)\,\dd s,
\]
\eqref{eq:gdelta-time-integrable} and \eqref{eq:section-four-smallness} give
\begin{equation}\label{eq:ydelta-quotient-tail}
	\norm{
		\frac{y_\delta(t)-y_0(t)}{\delta}
	}_{L^2}
	\leq
	Ct^{-1/4}(1+\log t)^5.
\end{equation}

\begin{lemma}
	\label{lem:phase-defect-density-quotient}
	Define
	\begin{equation}\label{eq:density-difference-definition}
		H_\delta(t)
		:=
		\abs{b_\delta(t)}^{2+\delta}
		-
		\abs{Z_\delta}^{2+\delta}.
	\end{equation}
	Then
	\begin{equation}\label{eq:Hdelta-quotient-bound}
		\norm{
			\frac{H_\delta(t)-H_0(t)}{\delta}
		}_{L^2}
		\leq
		Ct^{-1/16}(1+\log t)^5.
	\end{equation}
	For every fixed $t$, this quotient converges in $L^2$ to
	\begin{equation}\label{eq:Hdot-zero-formula}
		\dot H_0(t)
		:=
		2\operatorname{Re}
		\left(
		\overline{b_0(t)}\dot b_0(t)
		-
		\overline{Z_0}\dot Z_0
		\right)
		+
		\abs{b_0(t)}^2\log\abs{b_0(t)}
		-
		\abs{Z_0}^2\log\abs{Z_0}.
	\end{equation}
\end{lemma}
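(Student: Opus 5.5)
The plan is to write $H_\delta(t)=F_\delta(b_\delta(t))-F_\delta(Z_\delta)$ with $F_p(z):=\abs{z}^{2+p}$, and to split the quotient into a state part and an explicit exponent part:
\[
\frac{H_\delta-H_0}{\delta}
=
\frac{F_\delta(b_\delta)-F_\delta(Z_\delta)-F_\delta(b_0)+F_\delta(Z_0)}{\delta}
+
\frac{[F_\delta(b_0)-F_0(b_0)]-[F_\delta(Z_0)-F_0(Z_0)]}{\delta}.
\]
The point of this grouping is that both brackets are differences in which the factor $t^{-1/16}$ comes from $y_\delta=b_\delta-Z_\delta$ and $y_0=b_0-Z_0$, through \eqref{eq:ydelta-Hs-tail} and \eqref{eq:ydelta-quotient-tail}. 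The logarithmic losses come from the quotient bounds $g_\delta$, the analogue of \eqref{eq:gdelta-time-integrable}.

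\textbf{State part.} I write $b_\delta=Z_\delta+y_\delta$ and $b_0=Z_0+y_0$, and use
\[
F_\delta(b_\delta)-F_\delta(Z_\delta)=\int_0^1 DF_\delta(Z_\delta+\theta y_\delta)[y_\delta]\,\dd\theta ,
\]
and similarly at the base point $Z_0$ with increment $y_0$. The difference of these two integrals splits into two pieces.
\begin{itemize}
\item[(i)] The piece $\int_0^1 DF_\delta(Z_\delta+\theta y_\delta)[(y_\delta-y_0)/\delta]\,\dd\theta$. Since $\abs{DF_\delta(z)}\le C\abs z^{1+\delta}\le C\varepsilon$, its $L^2$ norm is bounded by $C\,t^{-1/4}(1+\log t)^5$ using \eqref{eq:ydelta-quotient-tail}.
\item[(ii)] The piece $\int_0^1\bigl[DF_\delta(Z_\delta+\theta y_\delta)-DF_\delta(Z_0+\theta y_0)\bigr][y_0]/\delta\,\dd\theta$. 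Because $D^2F_\delta$ is uniformly bounded on the bounded range (Lemma~\ref{lem:frozen-density-difference}), its norm is at most
\[
C\Bigl(\norm{\tfrac{Z_\delta-Z_0}{\delta}}_{L^2}+\norm{\tfrac{y_\delta-y_0}{\delta}}_{L^2}\Bigr)\norm{y_0}_{L^\infty}.
\]
\end{itemize}
For (ii), the quotient of $Z$ is bounded uniformly in $\delta$ by the dominated bound in Proposition~\ref{prop:Z-first-order-expansion}, namely $\norm{g_\delta(1)}+\int_1^\infty\norm{\partial_tg_\delta}$. The factor $\norm{y_0}_{L^\infty}\le Ct^{-1/16}$ then gives the rate $t^{-1/16}(1+\log t)^5$.

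\textbf{Explicit part.} I use $\partial_pF_p=F_p\log\abs z$ and apply the mean value formula in $p$. This writes the second bracket as
\[
\int_0^1\bigl[G_{\theta\delta}(b_0)-G_{\theta\delta}(Z_0)\bigr]\,\dd\theta,
\qquad G_p(z)=\abs z^{2+p}\log\abs z .
\]
On $\abs z\le1$ one has $\abs{DG_p(z)}\le C\abs z^{1+p}(1+\abs{\log\abs z})\le C\abs z^{1/2}$, uniformly in $p$. The mean value formula with \eqref{eq:ydelta-Hs-tail} and $\norm{y_0}_{L^2}\le Ct^{-1/16}$ then bounds this part by $Ct^{-1/16}$. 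Together with the state part, this proves \eqref{eq:Hdelta-quotient-bound}.

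\textbf{Convergence at fixed $t$.}
\begin{itemize}
\item For the state part, I use $g_\delta(t)\to\dot b_0(t)$ and $(Z_\delta-Z_0)/\delta\to\dot Z_0$ in $L^2$. The first follows from $g_\delta(1)\to S(1)\dot a_0(1)$ plus dominated convergence of $\int_1^t\partial_sg_\delta$, exactly as in Proposition~\ref{prop:Z-first-order-expansion}. I also use $b_\delta\to b_0$ and $Z_\delta\to Z_0$ in $L^\infty$, which is the $L^\infty$ version of $\delta\cdot$(bounded quotient), via \eqref{eq:b-Z-tail-Hs} and $H^{3/4}$ bounds. With these, it is cleaner to write the state part as
\[
\int_0^1 DF_\delta(b_0+\theta\delta g_\delta)[g_\delta]\,\dd\theta-\int_0^1 DF_\delta\Bigl(Z_0+\theta\delta\tfrac{Z_\delta-Z_0}{\delta}\Bigr)\Bigl[\tfrac{Z_\delta-Z_0}{\delta}\Bigr]\dd\theta .
\]
Only first derivatives appear here, and $DF_\delta\to DF_0$ uniformly on bounded sets. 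The coefficients converge in $L^\infty$ and the arguments converge in $L^2$, so the state part tends to $2\operatorname{Re}(\overline{b_0}\dot b_0-\overline{Z_0}\dot Z_0)$.
\item For the explicit part, the integrand converges pointwise to $G_0(b_0)-G_0(Z_0)$, with majorant $C(\abs{b_0}^{5/2}+\abs{Z_0}^{5/2})\in L^2$. Dominated convergence gives the log terms in \eqref{eq:Hdot-zero-formula}.
\end{itemize}

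\textbf{Main obstacle.} The delicate step is securing the $L^\infty$ convergence $b_\delta\to b_0$ and $Z_\delta\to Z_0$, and a uniform $L^2$ bound for $(Z_\delta-Z_0)/\delta$, using only the $L^2$ quotient control from Section~\ref{sec:quantitative-profile-continuity}. I would obtain the $L^\infty$ convergence by interpolating the $L^2$ bound $O(\delta)$ against the uniform $H^{3/4}$ bounds \eqref{eq:Z-uniform-Hs}.
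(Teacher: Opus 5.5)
Your proposal is correct and is essentially the paper's proof. The paper uses the same mean-value splitting: a $y$-variation bounded via \eqref{eq:ydelta-quotient-tail}, a $Z$-variation paired with $\norm{y_0}_{L^\infty}$, and an exponent variation bounded by $\norm{y_0}_{L^2}$. For the fixed-$t$ limit it uses the same $L^2$--$H^{3/4}$ interpolation; your grouping of the state terms differs only cosmetically. One harmless slip: $\abs z^{2+p}\abs{\log\abs z}$ is not $O(\abs z^{5/2})$ near $z=0$, so take the majorant $C(\abs{b_0}^{3/2}+\abs{Z_0}^{3/2})$ instead, which is still in $L^2$ because $b_0,Z_0\in L^2\cap L^\infty$.
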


\begin{proof}
	To retain the decay of $b_\delta-Z_\delta$, subtract the two densities before taking the exponent quotient. Write the changes in the tail, the limiting profile, and the exponent separately. The resulting decomposition is
	\begin{equation}\label{eq:Hdelta-three-part-decomposition}
		\begin{aligned}
			\frac{H_\delta-H_0}{\delta}
			=&\frac{\abs{Z_\delta+y_\delta}^{2+\delta}
				-\abs{Z_\delta+y_0}^{2+\delta}}{\delta}\\
			&+\frac{
				\bigl(\abs{Z_\delta+y_0}^{2+\delta}-\abs{Z_\delta}^{2+\delta}\bigr)
				-\bigl(\abs{Z_0+y_0}^{2+\delta}-\abs{Z_0}^{2+\delta}\bigr)}{\delta}\\
			&+\frac{
				\bigl(\abs{Z_0+y_0}^{2+\delta}-\abs{Z_0}^{2+\delta}\bigr)
				-\bigl(\abs{Z_0+y_0}^{2}-\abs{Z_0}^{2}\bigr)}{\delta}.
		\end{aligned}
	\end{equation}
	
	For the first term, the mean-value formula in $y$ and \eqref{eq:ydelta-quotient-tail} give
	\begin{equation}\label{eq:Hdelta-y-variation}
		\norm{\frac{\abs{Z_\delta+y_\delta}^{2+\delta}
				-\abs{Z_\delta+y_0}^{2+\delta}}{\delta}}_{L^2}
		\leq Ct^{-1/4}(1+\log t)^5.
	\end{equation}
	
	For the second term, retain the common increment $y_0$. Differentiating the increment with respect to its base point gives
	\[
	\begin{aligned}
		&\bigl(\abs{Z_\delta+y_0}^{2+\delta}-\abs{Z_\delta}^{2+\delta}\bigr)
		-\bigl(\abs{Z_0+y_0}^{2+\delta}-\abs{Z_0}^{2+\delta}\bigr)\\
		&\quad=\int_0^1
		\left.D_z\bigl(\abs{z+y_0}^{2+\delta}-\abs{z}^{2+\delta}\bigr)
		\right|_{z=Z_0+\theta(Z_\delta-Z_0)}
		[Z_\delta-Z_0]\,\dd\theta.
	\end{aligned}
	\]
	The derivative inside this integral is the difference of the two density derivatives at points separated by $y_0$. Since $\abs{D_z^2(\abs{z}^{2+p})}\leq C$ on the bounded range of the profiles, every occurrence of $Z_\delta-Z_0$ is accompanied by $y_0$. Hence
	\begin{equation}\label{eq:Hdelta-Z-variation}
	\begin{aligned}
			&\norm{\frac{
					\bigl(\abs{Z_\delta+y_0}^{2+\delta}-\abs{Z_\delta}^{2+\delta}\bigr)
					-\bigl(\abs{Z_0+y_0}^{2+\delta}-\abs{Z_0}^{2+\delta}\bigr)}{\delta}}_{L^2}\\
			&\quad\leq C\norm{y_0(t)}_{L^\infty}
			\norm{\frac{Z_\delta-Z_0}{\delta}}_{L^2}
			\leq Ct^{-1/16},
	\end{aligned}
	\end{equation}
	where \Cref{prop:Z-first-order-expansion} gives the uniform $L^2$ bound for $\frac{Z_\delta-Z_0}{\delta}$.
	
	For the exponent variation, differentiate both densities at the same exponent:
	\[
	\begin{aligned}
		&\frac{
			\bigl(\abs{Z_0+y_0}^{2+\delta}-\abs{Z_0}^{2+\delta}\bigr)
			-\bigl(\abs{Z_0+y_0}^{2}-\abs{Z_0}^{2}\bigr)}{\delta}\\
		&\quad=\int_0^1\Bigl[
		\abs{Z_0+y_0}^{2+\theta\delta}\log\abs{Z_0+y_0}
		-\abs{Z_0}^{2+\theta\delta}\log\abs{Z_0}
		\Bigr]\,\dd\theta.
	\end{aligned}
	\]
	Because
	\[
	\abs{D_z(\abs{z}^{2+p}\log\abs{z})}
	\leq
	C\abs{z}^{1+p}
	\left(
	1+\abs{\log\abs z}
	\right)
	\]
	is uniformly bounded on the range of the profiles and extends continuously at zero,
	\begin{equation}\label{eq:Hdelta-exponent-variation}
		\norm{\frac{
				\bigl(\abs{Z_0+y_0}^{2+\delta}-\abs{Z_0}^{2+\delta}\bigr)
				-\bigl(\abs{Z_0+y_0}^{2}-\abs{Z_0}^{2}\bigr)}{\delta}}_{L^2}
		\leq C\norm{y_0(t)}_{L^2}\leq Ct^{-1/16}.
	\end{equation}
	Combining \eqref{eq:Hdelta-y-variation}, \eqref{eq:Hdelta-Z-variation}, and \eqref{eq:Hdelta-exponent-variation} proves \eqref{eq:Hdelta-quotient-bound}.
	
	For fixed $t$, the state quotients converge in $L^2$ by \Cref{prop:Z-first-order-expansion} and the fixed-time convergence of $g_\delta$. The profiles are uniformly bounded in $H^{3/4}$, so interpolation through $H^{5/8}$ gives the $L^\infty$ convergence needed in the mean-value formulas. Applying the same three formulas in \eqref{eq:Hdelta-three-part-decomposition} therefore gives \eqref{eq:Hdot-zero-formula}.
\end{proof}

\begin{proposition}
	\label{prop:Gamma-first-order-expansion}
	There exists $\dot\Gamma_0\in L^2(\R)$ such that
	\begin{equation}\label{eq:Gamma-first-order-section-five}
		\frac{\Gamma_\delta-\Gamma_0}{\delta}
		\longrightarrow
		\dot\Gamma_0
		\qquad\text{in }L^2.
	\end{equation}
	It is given by
	\begin{equation}\label{eq:Gammadot-formula}
		\dot\Gamma_0
		=
		\frac{\kappa}{2}
		\int_1^\infty
		t^{-1}
		\left[
		\dot H_0(t)
		-
		\frac12\log(2t)H_0(t)
		\right]
		\,\dd t.
	\end{equation}
\end{proposition}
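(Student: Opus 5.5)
We write $\Gamma_\delta$ as a time integral, split its exponent quotient into a density part and a coefficient part, and pass to the limit by Bochner dominated convergence. Put $c_\delta(t):=2^{-1-\delta/2}t^{-\delta/2}$, so that \eqref{eq:Gamma-delta-definition} reads $\Gamma_\delta=\kappa\int_1^\infty t^{-1}c_\delta(t)H_\delta(t)\,\dd t$, with $H_\delta$ as in \eqref{eq:density-difference-definition}. The quotient then splits as
\[
\frac{\Gamma_\delta-\Gamma_0}{\delta}
=\kappa\int_1^\infty t^{-1}c_\delta(t)\,\frac{H_\delta(t)-H_0(t)}{\delta}\,\dd t
+\kappa\int_1^\infty t^{-1}\,\frac{c_\delta(t)-\frac12}{\delta}\,H_0(t)\,\dd t .
\]
Both integrals are Bochner integrals in $L^2$. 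We will show that each integrand converges pointwise in $t$ in $L^2$ and admits a $\delta$-independent integrable majorant.

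\emph{Density term.} Since $c_\delta\le C$, \Cref{lem:phase-defect-density-quotient} bounds the integrand by
\[
Ct^{-1-1/16}(1+\log t)^5,
\]
which is integrable on $[1,\infty)$. For fixed $t$, $c_\delta(t)\to\frac12$ and the density quotient converges to $\dot H_0(t)$ in $L^2$ by the same lemma. Dominated convergence therefore gives the limit $\frac{\kappa}{2}\int_1^\infty t^{-1}\dot H_0(t)\,\dd t$.

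\emph{Coefficient term.} By \eqref{eq:time-coefficient-quotient-bound}, $\abs{(c_\delta(t)-\frac12)/\delta}\le C(1+\log t)$, and the quotient converges to $-\frac14\log(2t)$. Lemma~\ref{lem:frozen-density-difference} at $\delta=0$ gives $\norm{H_0(t)}_{L^2}\le C\varepsilon^2t^{-1/16}$. The majorant $Ct^{-1-1/16}(1+\log t)$ is integrable, so the limit is $-\frac{\kappa}{4}\int_1^\infty t^{-1}\log(2t)H_0(t)\,\dd t$.

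Adding the two limits yields \eqref{eq:Gammadot-formula}. The same majorants show that this integral converges absolutely in $L^2$, hence $\dot\Gamma_0\in L^2$. The step that needs care is this choice of majorants. The factor $t^{-1}$ alone is not integrable, so both pieces depend on the $t^{-1/16}$ decay surviving the exponent quotient. That decay is supplied by the uniform bound of \Cref{lem:phase-defect-density-quotient}, which subtracts the two densities before dividing by $\delta$; with that lemma available, the remaining argument is routine.
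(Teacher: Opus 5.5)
Your proposal is correct and is essentially the paper's proof. Both split $(\Gamma_\delta-\Gamma_0)/\delta$ into a density-quotient integral and a coefficient-quotient integral, bound them by the same majorants $Ct^{-1-1/16}(1+\log t)^5$ and $Ct^{-1-1/16}(1+\log t)$, and conclude by Bochner dominated convergence from the fixed-time limits of \Cref{lem:phase-defect-density-quotient} together with $\partial_\delta\bigl(2^{-1-\delta/2}t^{-\delta/2}\bigr)|_{\delta=0}=-\frac14\log(2t)$. The only cosmetic difference is the source of the bound $\|H_0(t)\|_{L^2}\le C\varepsilon^2t^{-1/16}$: you take it from \Cref{lem:frozen-density-difference} at $\delta=0$, while the paper reads it off from \eqref{eq:ydelta-Hs-tail}.
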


\begin{proof}
	By \eqref{eq:Gamma-delta-definition},
	\begin{equation}\label{eq:Gamma-quotient-decomposition}
		\frac{\Gamma_\delta-\Gamma_0}{\delta}
		=
		\kappa\int_1^\infty
		2^{-1-\delta/2}t^{-1-\delta/2}
		\frac{H_\delta(t)-H_0(t)}{\delta}
		\,\dd t
		+
		\kappa\int_1^\infty
		t^{-1}
		\frac{2^{-1-\delta/2}t^{-\delta/2}-2^{-1}}{\delta}
		H_0(t)
		\,\dd t.
	\end{equation}
	By \eqref{eq:Hdelta-quotient-bound}, the first integrand is
	bounded in $L^2$ by
	\[
	Ct^{-1-1/16}(1+\log t)^5.
	\]
	Moreover, \eqref{eq:ydelta-Hs-tail} gives
	\[
	\norm{H_0(t)}_{L^2}
	\leq
	Ct^{-1/16}.
	\]
	Together with \eqref{eq:time-coefficient-quotient-bound}, the second integrand
	is bounded in $L^2$ by
	\[
	Ct^{-1-1/16}(1+\log t).
	\]
	Both bounds are integrable on $[1,\infty)$.
	
	Finally,
	\[
	\left.
	\partial_\delta\bigl(2^{-1-\delta/2}t^{-\delta/2}\bigr)
	\right|_{\delta=0}
	=
	-\frac14\log(2t).
	\]
	The fixed-time convergence from \Cref{lem:phase-defect-density-quotient} and Bochner dominated convergence in \eqref{eq:Gamma-quotient-decomposition} prove \eqref{eq:Gamma-first-order-section-five} and \eqref{eq:Gammadot-formula}.
\end{proof}

\subsection{Expansion of \texorpdfstring{$W_\delta$}{W} and of the nonlinear coefficient}
\label{subsec:profile-coefficient-expansion}

\begin{proof}[Proof of \Cref{prop:first-order-profile-expansion}]
	\Cref{prop:Z-first-order-expansion,prop:Gamma-first-order-expansion}
	give the first-order expansions of $Z_\delta$ and $\Gamma_\delta$. In particular,
	\begin{equation}\label{eq:profile-L2-Odelta}
		\norm{Z_\delta-Z_0}_{L^2}
		+
		\norm{\Gamma_\delta-\Gamma_0}_{L^2}
		\leq
		C\delta.
	\end{equation}
	To pass to the limit in the products defining $W_\delta$ and $2^{-1-\delta/2}\abs{W_\delta}^{2+\delta}$, we also need convergence in $L^\infty$. The differences are uniformly bounded in $H^{3/4}$, so interpolation gives
	\[
	\norm{f}_{H^{5/8}}
	\leq
	C\norm{f}_{L^2}^{1/6}
	\norm{f}_{H^{3/4}}^{5/6}.
	\]
	Since $H^{5/8}(\R)\hookrightarrow L^\infty(\R)$,
	\begin{equation}\label{eq:profile-Linfty-differences}
		\norm{Z_\delta-Z_0}_{L^\infty}
		+
		\norm{\Gamma_\delta-\Gamma_0}_{L^\infty}
		\leq
		C\delta^{1/6}.
	\end{equation}
	
	Taylor's formula and \eqref{eq:profile-Linfty-differences} give
	\[
		\norm{\left[
			\frac{\ee^{-\ii\Gamma_\delta}-\ee^{-\ii\Gamma_0}}{\delta}
			+\ii\ee^{-\ii\Gamma_0}\frac{\Gamma_\delta-\Gamma_0}{\delta}
			\right]Z_0}_{L^2}\leq C\norm{\Gamma_\delta-\Gamma_0}_{L^\infty}
		\norm{\frac{\Gamma_\delta-\Gamma_0}{\delta}}_{L^2}
		\norm{Z_0}_{L^\infty}\longrightarrow0.
	\]
	
	Since $W_\delta=\ee^{-\ii\Gamma_\delta}Z_\delta$,
	\[
	\frac{W_\delta-W_0}{\delta}
	=
	\ee^{-\ii\Gamma_\delta}\frac{Z_\delta-Z_0}{\delta}
	+
	\frac{
		\ee^{-\ii\Gamma_\delta}
		-
		\ee^{-\ii\Gamma_0}
	}{\delta}
	Z_0.
	\]
	By \eqref{eq:profile-Linfty-differences} and the $L^2$ convergences of $\frac{Z_\delta-Z_0}{\delta}$ and $\frac{\Gamma_\delta-\Gamma_0}{\delta}$, the two terms
	converge in $L^2$. Therefore
	\begin{equation}\label{eq:Wdot-formula}
		\frac{W_\delta-W_0}{\delta}
		\longrightarrow
		\dot W_0
		:=
		\ee^{-\ii\Gamma_0}
		\left(
		\dot Z_0-\ii\dot\Gamma_0Z_0
		\right)
		\qquad
		\text{in }L^2.
	\end{equation}
	
	It remains to expand $2^{-1-\delta/2}\abs{Z_\delta}^{2+\delta}$, which equals the coefficient $2^{-1-\delta/2}\abs{W_\delta}^{2+\delta}$.
	First,
	\begin{equation}\label{eq:Z-square-quotient}
		\frac{
			\abs{Z_\delta}^2-\abs{Z_0}^2
		}{\delta}
		=
		2\operatorname{Re}(\overline{Z_0}\frac{Z_\delta-Z_0}{\delta})
		+
		\frac{\abs{Z_\delta-Z_0}^2}{\delta}.
	\end{equation}
	The first term converges in $L^2$, while
	\[
	\norm{\frac{\abs{Z_\delta-Z_0}^2}{\delta}}_{L^2}
	\leq
	\norm{Z_\delta-Z_0}_{L^\infty}
	\norm{\frac{Z_\delta-Z_0}{\delta}}_{L^2}
	\longrightarrow0
	\]
	by \eqref{eq:profile-Linfty-differences}.
	
	We next consider the moving exponent. For $0<r\leq1$,
	\[
	\frac{\dd}{\dd r}
	\left(
	\frac{r^{2+\delta}-r^2}{\delta}
	\right)
	=
	r
	\left[
	2\frac{r^\delta-1}{\delta}
	+
	r^\delta
	\right].
	\]
	Since
	\[
	\abs{
		\frac{r^\delta-1}{\delta}
	}
	\leq
	\abs{\log r},
	\]
	the derivative is bounded uniformly in $\delta$ by
	\[
	Cr(1+\abs{\log r})
	\leq C,
	\]
	with continuous extension at $r=0$. Hence
	\[
	\norm{
		\frac{
			\abs{Z_\delta}^{2+\delta}
			-
			\abs{Z_\delta}^2
		}{\delta}
		-
		\frac{
			\abs{Z_0}^{2+\delta}
			-
			\abs{Z_0}^2
		}{\delta}
	}_{L^2}
	\leq
	C\norm{Z_\delta-Z_0}_{L^2}
	\longrightarrow0.
	\]
	For the fixed argument,
	\[
	\frac{
		\abs{Z_0}^{2+\delta}
		-
		\abs{Z_0}^2
	}{\delta}
	\longrightarrow
	\abs{Z_0}^2\log\abs{Z_0}
	\qquad
	\text{in }L^2.
	\]
	Indeed, for $0\leq r\leq1$,
	\[
	\abs{
		\frac{r^{2+\delta}-r^2}{\delta}
	}
	\leq
	r^2\abs{\log r}
	\lesssim
	r^{3/2},
	\]
	and $Z_0\in L^2\cap L^\infty$.
	
	Finally,
	\[
	2^{-1-\delta/2}
	=
	\frac12
	-
	\frac{\delta}{4}\log2
	+
	O(\delta^2).
	\]
	Combining these expansions proves
	\eqref{eq:coefficient-first-order-main}.
\end{proof}

\section{Transition-phase analysis}
\label{sec:transition-phase-analysis}

We combine the uniform asymptotics with the first-order coefficient expansion from \Cref{prop:first-order-profile-expansion}. No new evolution estimate is needed. Throughout the section, let
\[
\delta_n\to0^+,\qquad t_n\to\infty,
\]
and set
\begin{equation}\label{eq:transition-lambda-definition}
	\lambda_n:=\delta_n\log t_n.
\end{equation}
The clock identity is
\begin{equation}\label{eq:transition-exact-clock-function}
	P_{\delta_n}(t_n)=\frac{2(1-\ee^{-\lambda_n/2})}{\delta_n}.
\end{equation}
We retain this identity until a stronger scale condition justifies an absolute replacement of the clock.

\subsection{Exact-clock reduction}
\label{subsec:exact-clock-reduction}

\begin{proposition}
	\label{prop:exact-clock-reduction}
	Under the assumptions of \Cref{prop:uniform-asymptotics},
	\begin{equation}\label{eq:phase-reduction}
		\begin{aligned}
			&P_{\delta_n}(t_n)2^{-1-\delta_n/2}\abs{W_{\delta_n}}^{2+\delta_n}\\
			&\quad=\frac12P_{\delta_n}(t_n)\abs{W_0}^2+(1-\ee^{-\lambda_n/2})\Bigl[
			2\operatorname{Re}(\overline{Z_0}\dot Z_0)
			+\abs{Z_0}^2\left(\log\abs{Z_0}-\frac12\log2\right)\Bigr]
			+o_{L_v^2}(1).
		\end{aligned}
	\end{equation}
	Consequently,
	\begin{equation}\label{eq:amplitude-reduction}
		\begin{aligned}
			a_{\delta_n}(t_n)
			=&\exp\left\{-\ii\kappa \frac12P_{\delta_n}(t_n)\abs{W_0}^2\right\}W_0\\
			&\times\exp\Biggl\{-\ii\kappa (1-\ee^{-\lambda_n/2})\Bigl[
			2\operatorname{Re}(\overline{Z_0}\dot Z_0)+\abs{Z_0}^2\left(\log\abs{Z_0}-\frac12\log2\right)
			\Bigr]\Biggr\}+o_{L_v^2}(1).
		\end{aligned}
	\end{equation}
\end{proposition}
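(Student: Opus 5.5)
The proof of \Cref{prop:exact-clock-reduction} combines the coefficient expansion \eqref{eq:coefficient-first-order-main} with the clock identity \eqref{eq:transition-exact-clock-function}, and then feeds the resulting phase into \Cref{prop:uniform-asymptotics}.

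\emph{Phase reduction.} Write
\[
c_\delta:=2^{-1-\delta/2}\abs{W_\delta}^{2+\delta},\qquad
c_\delta=\tfrac12\abs{W_0}^2+\delta E+\delta r_\delta,
\]
where $E$ is the bracket in \eqref{eq:coefficient-first-order-main}. By \Cref{prop:first-order-profile-expansion}, $\norm{r_\delta}_{L^2}\to0$ as $\delta\to0$. Multiplying by $P_{\delta_n}(t_n)$ gives three terms.

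The first term is $\tfrac12 P_{\delta_n}(t_n)\abs{W_0}^2$.

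The second term is $P_{\delta_n}(t_n)\delta_nE=2(1-\ee^{-\lambda_n/2})E$. I will check that the factor $2$ is consistent with the stated form. The bracket in \eqref{eq:coefficient-first-order-main} carries a prefactor $\delta/2$, so $\delta E$ there reads $\tfrac{\delta}{2}[\cdots]$. Hence $P\cdot\tfrac{\delta}{2}[\cdots]=(1-\ee^{-\lambda_n/2})[\cdots]$, which matches the statement.

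The third term is the remainder, bounded by
\[
\norm{P_{\delta_n}\delta_n r_{\delta_n}}_{L^2}\le 2\norm{r_{\delta_n}}_{L^2}\to0,
\]
because $\delta P_\delta(t)=2(1-t^{-\delta/2})\le2$ uniformly. This uniform bound $\delta P_\delta\le 2$ is the key point: it makes the estimate valid for arbitrary joint sequences, with no regime splitting. This proves \eqref{eq:phase-reduction}.

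\emph{Amplitude reduction.} By \eqref{eq:uniform-asymptotics}, $a_{\delta_n}(t_n)=\ee^{-\ii\kappa P c_{\delta_n}}W_{\delta_n}+o_{L^2}(1)$. The remaining steps are as follows.

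\begin{enumerate}
\item Replace $W_{\delta_n}$ by $W_0$. This costs $\norm{W_{\delta_n}-W_0}_{L^2}\le C\delta_n$ by \eqref{eq:W-first-order-main}, since the exponential is unimodular.
\item Replace the phase $\kappa P c_{\delta_n}$ by the right side of \eqref{eq:phase-reduction} without its $o(1)$ term. Using $\abs{\ee^{-\ii\alpha}-\ee^{-\ii\beta}}\le\abs{\alpha-\beta}$, the error is bounded by $\norm{(\alpha-\beta)W_0}_{L^2}\le\norm{\alpha-\beta}_{L^2}\norm{W_0}_{L^\infty}$. Here $W_0\in H^{3/4}\hookrightarrow L^\infty$, so this is $o(1)$.
\end{enumerate}

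\emph{Main obstacle.} The delicate point is that the phase errors are only controlled in $L^2$, not in $L^\infty$. For this reason the difference must be placed in $L^2$ and the profile $W_0$ in $L^\infty$. Splitting the exponential into the product of the two factors is then trivial, which gives \eqref{eq:amplitude-reduction}.
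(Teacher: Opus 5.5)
Your proposal is correct and follows the paper's proof essentially step by step. You multiply the $o_{L_v^2}(\delta)$ coefficient expansion by the clock using $0\le\delta_nP_{\delta_n}(t_n)=2(1-\ee^{-\lambda_n/2})\le2$. You then replace $W_{\delta_n}$ by $W_0$ outside the exponential and bound the phase change by $\norm{W_0}_{L^\infty}$ times the $L^2$ phase error, together with the uniform $C\varepsilon t_n^{-1/16}$ error. The only blemish is bookkeeping: with $E$ defined as the bracket, the expansion should read $c_\delta=\tfrac12\abs{W_0}^2+\tfrac{\delta}{2}E+\delta r_\delta$, as your own consistency check already recognizes.
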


\begin{proof}
	By \eqref{eq:coefficient-first-order-main}, the error in the first-order coefficient expansion is $o_{L^2}(\delta_n)$. Since
	\[
	0\leq P_{\delta_n}(t_n)\leq\frac2{\delta_n},
	\qquad
	\frac{\delta_n}{2}P_{\delta_n}(t_n)=1-\ee^{-\lambda_n/2},
	\]
	multiplying that expansion by $P_{\delta_n}(t_n)$ gives \eqref{eq:phase-reduction}, with an $o_{L^2}(1)$ error. The phase in \eqref{eq:uniform-asymptotics} is
	\[
	\Phi_{\delta_n}[W_{\delta_n}](t_n)
	=\kappa P_{\delta_n}(t_n)
	2^{-1-\delta_n/2}\abs{W_{\delta_n}}^{2+\delta_n}.
	\]
	First replace $W_{\delta_n}$ outside the exponential by $W_0$, at a cost of $\norm{W_{\delta_n}-W_0}_{L^2}\to0$ by \eqref{eq:W-first-order-main}. Then the additional error is at most $\norm{W_0}_{L^\infty}$ times the $L^2$ error in \eqref{eq:phase-reduction}. Together with the uniform error $C\varepsilon t_n^{-1/16}$, these bounds prove \eqref{eq:amplitude-reduction}. Here $W_0\in H^{3/4}(\R)\hookrightarrow L^\infty(\R)$.
\end{proof}

\subsection{The three regimes}
\label{subsec:three-exact-clock-regimes}

\begin{theorem}
	\label{thm:exact-clock-trichotomy}
	Under the assumptions of \Cref{prop:exact-clock-reduction}, the following assertions hold.
	
	If $\lambda_n\to0$, then
	\begin{equation}\label{eq:exact-clock-regime-one-amplitude}
		a_{\delta_n}(t_n)
		=\ee^{-\frac{\ii\kappa}{2}P_{\delta_n}(t_n)\abs{W_0}^2}W_0
		+o_{L_v^2}(1),
	\end{equation}
	and
	\begin{equation}\label{eq:exact-clock-regime-one-normalized}
		\frac{P_{\delta_n}(t_n)2^{-1-\delta_n/2}
			\abs{W_{\delta_n}}^{2+\delta_n}}{\log t_n}
		\longrightarrow\frac12\abs{W_0}^2
		\qquad\text{in }L_v^2.
	\end{equation}
	
	If $\lambda_n\to\Lambda\in(0,\infty)$, then
	\begin{equation}\label{eq:exact-clock-regime-two-amplitude}
		\begin{aligned}
			a_{\delta_n}(t_n)
			=&\exp\left\{-\ii\kappa \frac{1-\ee^{-\lambda_n/2}}{\delta_n}\abs{W_0}^2\right\}W_0\\
			&\times\exp\Biggl\{-\ii\kappa (1-\ee^{-\Lambda/2})\Bigl[
			2\operatorname{Re}(\overline{Z_0}\dot Z_0)+\abs{Z_0}^2\left(\log\abs{Z_0}-\frac12\log2\right)
			\Bigr]\Biggr\}+o_{L_v^2}(1).
		\end{aligned}
	\end{equation}
	and
	\begin{equation}\label{eq:exact-clock-regime-two-normalized}
		\delta_nP_{\delta_n}(t_n)2^{-1-\delta_n/2}
		\abs{W_{\delta_n}}^{2+\delta_n}
		\longrightarrow(1-\ee^{-\Lambda/2})\abs{W_0}^2
		\qquad\text{in }L_v^2.
	\end{equation}
	The second exponential in \eqref{eq:exact-clock-regime-two-amplitude} contains a finite phase correction and cannot be absorbed into the error.
	
	If $\lambda_n\to\infty$, then
	\begin{equation}\label{eq:exact-clock-regime-three-amplitude}
		\begin{aligned}
			a_{\delta_n}(t_n)
			=&\exp\left\{-\ii\kappa \frac{1-\ee^{-\lambda_n/2}}{\delta_n}\abs{W_0}^2\right\}W_0\\
			&\times\exp\Biggl\{-\ii\kappa \Bigl[
			2\operatorname{Re}(\overline{Z_0}\dot Z_0)+\abs{Z_0}^2\left(\log\abs{Z_0}-\frac12\log2\right)
			\Bigr]\Biggr\}+o_{L_v^2}(1).
		\end{aligned}
	\end{equation}
	and
	\begin{equation}\label{eq:exact-clock-regime-three-normalized}
		\delta_nP_{\delta_n}(t_n)2^{-1-\delta_n/2}
		\abs{W_{\delta_n}}^{2+\delta_n}
		\longrightarrow\abs{W_0}^2
		\qquad\text{in }L_v^2.
	\end{equation}
\end{theorem}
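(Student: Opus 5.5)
The plan is to derive all three regimes from \Cref{prop:exact-clock-reduction}, using only elementary facts about the scalars $\lambda_n$ and $P_{\delta_n}(t_n)$ and no new evolution estimate. Write
\[
Q:=2\operatorname{Re}(\overline{Z_0}\dot Z_0)+\abs{Z_0}^2\left(\log\abs{Z_0}-\frac12\log2\right)\in L_v^2 .
\]
The basic tool is the pointwise bound $\abs{\ee^{-\ii\alpha}-\ee^{-\ii\beta}}\le\abs{\alpha-\beta}$ for real $\alpha,\beta$, together with $W_0\in L^\infty$. It gives
\[
\snorm{W_0(\ee^{-\ii\kappa c_nQ}-\ee^{-\ii\kappa c'_nQ})}_{L^2}\le\snorm{W_0}_{L^\infty}\abs{c_n-c'_n}\snorm{Q}_{L^2}.
\]
Hence in \eqref{eq:amplitude-reduction} the scalar prefactor $c_n=1-\ee^{-\lambda_n/2}$ may be replaced by any $c'_n$ with $c_n-c'_n\to0$. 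The first exponential in \eqref{eq:amplitude-reduction} is unimodular, so it does not affect these estimates.

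\textbf{Case $\lambda_n\to0$.} Here $c_n\to0$, so we may take $c'_n=0$, which gives \eqref{eq:exact-clock-regime-one-amplitude}. For \eqref{eq:exact-clock-regime-one-normalized}, divide \eqref{eq:phase-reduction} by $\log t_n\to\infty$. \Cref{prop:phase-clock-trichotomy} gives $P_{\delta_n}(t_n)/\log t_n\to1$. The $Q$ term and the $o_{L^2}(1)$ error are both bounded in $L^2$ (since $c_n\le1$), so after division they vanish.

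\textbf{Case $\lambda_n\to\Lambda$.} Write $\tfrac12P_{\delta_n}(t_n)=(1-\ee^{-\lambda_n/2})/\delta_n$; this is an exact identity, and the stated formula keeps $\lambda_n$ in the leading clock. Then take $c'_n=1-\ee^{-\Lambda/2}$, which gives \eqref{eq:exact-clock-regime-two-amplitude}.

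\textbf{Case $\lambda_n\to\infty$.} Take $c'_n=1$, since $\ee^{-\lambda_n/2}\to0$. This gives \eqref{eq:exact-clock-regime-three-amplitude}.

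\textbf{Normalized limits in the last two cases.} Multiply \eqref{eq:phase-reduction} by $\delta_n$. The left side becomes $\delta_nP_{\delta_n}(t_n)2^{-1-\delta_n/2}\abs{W_{\delta_n}}^{2+\delta_n}$. On the right, the first term is $\frac{\delta_n}{2}P_{\delta_n}(t_n)\abs{W_0}^2=(1-\ee^{-\lambda_n/2})\abs{W_0}^2$, which tends to $(1-\ee^{-\Lambda/2})\abs{W_0}^2$ or to $\abs{W_0}^2$ in $L^2$; here $\abs{W_0}^2\in L^2$ because $W_0\in L^2\cap L^\infty$. The remaining terms are $O(\delta_n)$ in $L^2$ and vanish. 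Alternatively, one can read off the limits directly from \eqref{eq:coefficient-first-order-main} at order zero.

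\textbf{The non-absorbability remark.} In the transition case, $(1-\ee^{-\Lambda/2})Q$ is a fixed nonzero phase whenever $Q\ne0$ on the support of $W_0$. It therefore cannot be moved into the $o(1)$ error. Nothing needs proving here beyond noting this.

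The only point needing care is uniformity: the reduction must be applied along the full sequence. \Cref{prop:exact-clock-reduction} already holds for arbitrary joint sequences, so this is automatic. I expect no real obstacle beyond keeping the exact clock, rather than its limit, in the leading phase. That exact clock cannot be replaced by its limit, because $\delta_n^{-1}(\ee^{-\lambda_n/2}-\ee^{-\Lambda/2})$ need not tend to zero.
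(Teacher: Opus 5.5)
Your proposal is correct and follows essentially the paper's own argument: apply \Cref{prop:exact-clock-reduction}, then use $W_0\in L^\infty$ and the $L^2$ membership of the explicit correction to replace the scalar $1-\ee^{-\lambda_n/2}$ by $0$, by $1-\ee^{-\Lambda/2}$, or by $1$, while keeping the exact leading clock. The only cosmetic difference concerns the normalized limits: the paper combines $P_{\delta_n}(t_n)/\log t_n\to1$ (resp.\ $\delta_nP_{\delta_n}(t_n)=2(1-\ee^{-\lambda_n/2})$) with the zeroth-order convergence $2^{-1-\delta_n/2}\abs{W_{\delta_n}}^{2+\delta_n}\to\frac12\abs{W_0}^2$ in $L^2$, which is the alternative you mention, rather than rescaling the phase reduction.
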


\begin{proof}
	If $\lambda_n\to0$, then $1-\ee^{-\lambda_n/2}\to0$. The explicit first-order coefficient in \eqref{eq:coefficient-first-order-main} belongs to $L^2$. Thus $W_0\in L^\infty$ allow the second exponential in \eqref{eq:amplitude-reduction} to be replaced by one, giving \eqref{eq:exact-clock-regime-one-amplitude}. Moreover,
	\[
	\frac{P_{\delta_n}(t_n)}{\log t_n}
	=\frac{2(1-\ee^{-\lambda_n/2})}{\lambda_n}\longrightarrow1.
	\]
	Together with
	\[
	2^{-1-\delta_n/2}\abs{W_{\delta_n}}^{2+\delta_n}
	\longrightarrow\frac12\abs{W_0}^2\quad\text{in }L^2,
	\]
	this proves \eqref{eq:exact-clock-regime-one-normalized}.
	
	If $\lambda_n\to\Lambda\in(0,\infty)$, then $1-\ee^{-\lambda_n/2}\to1-\ee^{-\Lambda/2}$. Apply the same exponential estimate only to the finite correction in \eqref{eq:amplitude-reduction}; the leading clock remains unchanged. This gives \eqref{eq:exact-clock-regime-two-amplitude}.
	Also,
	\[
		\delta_nP_{\delta_n}(t_n)2^{-1-\delta_n/2}
		\abs{W_{\delta_n}}^{2+\delta_n}
		=2(1-\ee^{-\lambda_n/2})2^{-1-\delta_n/2}
		\abs{W_{\delta_n}}^{2+\delta_n}
		\longrightarrow(1-\ee^{-\Lambda/2})\abs{W_0}^2
		\quad\text{in }L^2.
	\]
	Finally, if $\lambda_n\to\infty$, the same argument uses
	$1-\ee^{-\lambda_n/2}\to1$ and yields
	\eqref{eq:exact-clock-regime-three-amplitude} and
	\eqref{eq:exact-clock-regime-three-normalized}.
\end{proof}

\subsection{Taylor hierarchy}
\label{subsec:cubic-side-hierarchy}

The relative equivalence $P_\delta(t)\sim\log t$ does not by itself give the absolute phase accuracy needed here. The clock admits the following Taylor estimate.

\begin{lemma}[Taylor remainder for the clock]
	\label{lem:exact-clock-taylor-remainder}
	For every integer $m\geq0$, $\delta>0$, and $t\geq1$,
	\begin{equation}\label{eq:exact-clock-taylor-remainder}
		\abs{P_\delta(t)-P_\delta^{(m)}(t)}
		\leq\frac{\delta^{m+1}(\log t)^{m+2}}{2^{m+1}(m+2)!}.
	\end{equation}
\end{lemma}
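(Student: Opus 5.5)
The plan is to reduce \eqref{eq:exact-clock-taylor-remainder} to a Taylor remainder estimate for one explicit scalar function. Set $L:=\log t\geq0$, and write the clock as
\[
P_\delta(t)=\int_0^{L}\ee^{-\delta s/2}\,\dd s,
\]
which holds because $\frac{\dd}{\dd s}\bigl(-\frac2\delta \ee^{-\delta s/2}\bigr)=\ee^{-\delta s/2}$ and $\frac2\delta(1-\ee^{-\delta L/2})=P_\delta(t)$. Likewise, integrating the monomials term by term gives
\[
P_\delta^{(m)}(t)=\int_0^L\sum_{k=0}^m\frac{(-\delta s/2)^k}{k!}\,\dd s,
\]
since $\int_0^L s^k\,\dd s=L^{k+1}/(k+1)$ and $\frac{(-1)^k\delta^k}{2^kk!(k+1)}=\frac{(-1)^k\delta^k}{2^k(k+1)!}$. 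Hence the difference is the integral over $[0,L]$ of the $m$-th Taylor remainder of the exponential evaluated at $x=-\delta s/2\leq0$.

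The next step is to bound that pointwise remainder. For $x\leq0$, Lagrange's form gives
\[
\ee^{x}-\sum_{k=0}^m\frac{x^k}{k!}=\frac{\ee^{\xi}x^{m+1}}{(m+1)!}
\]
with $\xi$ between $x$ and $0$. Since $\xi\leq0$, we have $\ee^\xi\leq1$, so the remainder is at most $\abs{x}^{m+1}/(m+1)!=(\delta s/2)^{m+1}/(m+1)!$.

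Integrating this bound in $s$ over $[0,L]$ gives
\[
\abs{P_\delta(t)-P_\delta^{(m)}(t)}\leq\frac{\delta^{m+1}}{2^{m+1}(m+1)!}\cdot\frac{L^{m+2}}{m+2}=\frac{\delta^{m+1}(\log t)^{m+2}}{2^{m+1}(m+2)!},
\]
which is exactly \eqref{eq:exact-clock-taylor-remainder}.

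There is no genuine obstacle. The only points to watch are that $x\leq0$, which is what makes $\ee^\xi\leq1$ uniformly in $t\geq1$ with no smallness assumption on $\delta\log t$, and the identification of $P^{(m)}_\delta$ with the integrated truncated series. An alternative route applies Taylor's theorem directly to $F(\lambda)=2(1-\ee^{-\lambda/2})$ at $\lambda=\delta\log t$, using $P_\delta=F(\lambda)/\delta$. Then $\abs{F^{(m+2)}}=2^{-m-1}\ee^{-\lambda/2}\leq2^{-m-1}$, and the resulting bound is $\delta^{-1}\lambda^{m+2}/(2^{m+1}(m+2)!)$, which is the same estimate.
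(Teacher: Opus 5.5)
Your proof is correct. The integral representation $P_\delta(t)=\int_0^{\log t}\ee^{-\delta s/2}\,\dd s$, followed by integrating the order-$m$ Lagrange remainder of the exponential, repackages the paper's argument. The paper applies Taylor's theorem of order $m+1$ directly to $1-\ee^{-x}$ at $x=\delta\log t/2\geq0$, bounds the remainder by $x^{m+2}/(m+2)!$, multiplies by $2/\delta$, and re-indexes; your ``alternative route'' via $F(\lambda)=2(1-\ee^{-\lambda/2})$ is exactly this proof. In both versions the key point is the one you identified: the exponential is evaluated at a nonpositive argument, so the Lagrange factor is at most one without any smallness assumption on $\delta\log t$.
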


\begin{proof}
	Taylor's theorem, with $\delta\log t/2\geq0$, gives directly
	\[
	\abs{
		1-\ee^{-\delta\log t/2}
		-\sum_{j=1}^{m+1}\frac{(-1)^{j+1}}{j!}
		\left(\frac{\delta\log t}{2}\right)^j}
	\leq\frac{(\delta\log t/2)^{m+2}}{(m+2)!}.
	\]
	Multiply by $2/\delta$ and set $k=j-1$ in the sum to obtain \eqref{eq:exact-clock-taylor-remainder}.
\end{proof}

\begin{corollary}[Truncated-clock asymptotics]
	\label{cor:cubic-hierarchy-asymptotics}
	Under the assumptions of \Cref{thm:exact-clock-trichotomy}, fix an integer $m\geq0$. If
	\begin{equation}\label{eq:cubic-hierarchy-assumptions}
		\delta_n^{m+1}(\log t_n)^{m+2}\longrightarrow0,
	\end{equation}
	then
	\begin{equation}\label{eq:cubic-hierarchy-clock-convergence}
		P_{\delta_n}(t_n)-P_{\delta_n}^{(m)}(t_n)\longrightarrow0
	\end{equation}
	and
	\begin{equation}\label{eq:cubic-hierarchy-amplitude}
		a_{\delta_n}(t_n)
		=\ee^{-\frac{\ii\kappa}{2}P_{\delta_n}^{(m)}(t_n)\abs{W_0}^2}W_0
		+o_{L_v^2}(1).
	\end{equation}
	For $m=0$, this gives $P_{\delta_n}(t_n)=\log t_n+o(1)$ under
	$\delta_n(\log t_n)^2\to0$.
	
	For $m\geq1$, at the scale
	\begin{equation}\label{eq:cubic-scale-hierarchy-scale}
		\log t_n\sim c\,\delta_n^{-m/(m+1)},\qquad c>0,
	\end{equation}
	condition \eqref{eq:cubic-hierarchy-assumptions} holds and
	\begin{equation}\label{eq:cubic-scale-hierarchy-term}
		P_{\delta_n}^{(m)}(t_n)-P_{\delta_n}^{(m-1)}(t_n)
		\longrightarrow\frac{(-1)^m c^{m+1}}{2^m(m+1)!}.
	\end{equation}
	Thus the newly visible finite phase contribution is
	\begin{equation}\label{eq:cubic-scale-hierarchy-phase}
		\frac{(-1)^m c^{m+1}}{2^{m+1}(m+1)!}\abs{W_0}^2.
	\end{equation}
	The first examples are
	\[
	-\frac{c^2}{8}\abs{W_0}^2,\qquad
	\frac{c^3}{48}\abs{W_0}^2,\qquad
	-\frac{c^4}{384}\abs{W_0}^2,
	\]
	at
	\[
	\log t\sim c\,\delta^{-1/2},\qquad
	\log t\sim c\,\delta^{-2/3},\qquad
	\log t\sim c\,\delta^{-3/4},
	\]
	respectively.
\end{corollary}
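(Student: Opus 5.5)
The plan is to derive everything from \Cref{lem:exact-clock-taylor-remainder} and the regime-one statement of \Cref{thm:exact-clock-trichotomy}; no new PDE estimate is needed.

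\emph{Clock replacement.} Under \eqref{eq:cubic-hierarchy-assumptions}, \eqref{eq:exact-clock-taylor-remainder} gives at once
\[
\abs{P_{\delta_n}(t_n)-P_{\delta_n}^{(m)}(t_n)}\leq \frac{\delta_n^{m+1}(\log t_n)^{m+2}}{2^{m+1}(m+2)!}\to0,
\]
which is \eqref{eq:cubic-hierarchy-clock-convergence}. Next I will check that \eqref{eq:cubic-hierarchy-assumptions} forces $\lambda_n\to0$. Write
\[
\lambda_n^{m+2}=\delta_n\cdot\delta_n^{m+1}(\log t_n)^{m+2}.
\]
Both factors tend to zero, so $\lambda_n\to0$ and \eqref{eq:exact-clock-regime-one-amplitude} applies.

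\emph{Amplitude formula.} To pass from $P_{\delta_n}$ to $P^{(m)}_{\delta_n}$ in \eqref{eq:exact-clock-regime-one-amplitude}, I will use $\abs{\ee^{\ii\alpha}-\ee^{\ii\beta}}\leq\abs{\alpha-\beta}$ pointwise. This gives
\[
\norm{\bigl(\ee^{-\frac{\ii\kappa}{2}P_{\delta_n}\abs{W_0}^2}-\ee^{-\frac{\ii\kappa}{2}P^{(m)}_{\delta_n}\abs{W_0}^2}\bigr)W_0}_{L^2}
\leq\tfrac12\abs{P_{\delta_n}-P^{(m)}_{\delta_n}}\,\norm{W_0}_{L^\infty}^2\norm{W_0}_{L^2}.
\]
This tends to zero because $W_0\in H^{3/4}\hookrightarrow L^\infty$. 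The result is \eqref{eq:cubic-hierarchy-amplitude}. The case $m=0$ is the special instance $P^{(0)}_\delta=\log t$.

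\emph{The scale statements.} Suppose $\log t_n=(c+o(1))\delta_n^{-m/(m+1)}$. Then
\[
\delta_n^{m+1}(\log t_n)^{m+2}=(c+o(1))^{m+2}\,\delta_n^{m+1-m(m+2)/(m+1)}.
\]
The exponent equals $1/(m+1)>0$, so \eqref{eq:cubic-hierarchy-assumptions} holds. Similarly, $\delta_n^m(\log t_n)^{m+1}=(c+o(1))^{m+1}\to c^{m+1}$. The last Taylor term is
\[
P^{(m)}-P^{(m-1)}=\frac{(-1)^m}{2^m(m+1)!}\,\delta^m(\log t)^{m+1},
\]
which yields \eqref{eq:cubic-scale-hierarchy-term}. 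Multiplying by $\tfrac12\abs{W_0}^2$, which lies in $L^2$ since $W_0\in L^2\cap L^\infty$, gives \eqref{eq:cubic-scale-hierarchy-phase}. The listed examples follow by substituting $m=1,2,3$.

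There is no real obstacle here. The only point needing care is the observation that the hierarchy hypothesis places the sequence in regime one, so that the first-order coefficient correction in \eqref{eq:amplitude-reduction} is negligible.
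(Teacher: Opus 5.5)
Your proposal is correct and follows the paper's proof essentially step by step. It uses the Taylor remainder lemma for the clock and then deduces $\lambda_n\to0$; you write $\lambda_n^{m+2}=\delta_n\cdot\delta_n^{m+1}(\log t_n)^{m+2}$ where the paper divides $\delta_n^{m+1}(\log t_n)^{m+2}$ by $\log t_n$, which is an equivalent triviality. It then combines the regime-one amplitude formula with the pointwise Lipschitz bound for the exponential and $W_0\in L^\infty$, and finishes with the exponent count $m+1-m(m+2)/(m+1)=1/(m+1)$ at the scale $\log t_n\sim c\,\delta_n^{-m/(m+1)}$.
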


\begin{proof}
	The scalar convergenc \eqref{eq:cubic-hierarchy-clock-convergence}
	follows from \Cref{lem:exact-clock-taylor-remainder}. Moreover,
	\[
	(\delta_n\log t_n)^{m+1}
	=\frac{\delta_n^{m+1}(\log t_n)^{m+2}}{\log t_n}\longrightarrow0,
	\]
	so $\lambda_n\to0$ and \eqref{eq:exact-clock-regime-one-amplitude}
	applies. Since $W_0\in L^\infty$ and $\abs{W_0}^2\in L^2$,
	\[
	\begin{aligned}
		&\snorm{\left(
			\ee^{-\frac{\ii\kappa}{2}P_{\delta_n}(t_n)\abs{W_0}^2}
			-\ee^{-\frac{\ii\kappa}{2}P_{\delta_n}^{(m)}(t_n)\abs{W_0}^2}
			\right)W_0}_{L^2}\\
		&\quad\leq\frac12\norm{W_0}_{L^\infty}\norm{\abs{W_0}^2}_{L^2}
		\abs{P_{\delta_n}(t_n)-P_{\delta_n}^{(m)}(t_n)}\longrightarrow0.
	\end{aligned}
	\]
	This proves \eqref{eq:cubic-hierarchy-amplitude}. Under \eqref{eq:cubic-scale-hierarchy-scale},
	\[
	\delta_n^{m+1}(\log t_n)^{m+2}
	\sim c^{m+2}\delta_n^{1/(m+1)}\longrightarrow0,
	\qquad
	\delta_n^m(\log t_n)^{m+1}\longrightarrow c^{m+1}.
	\]
	This proves \eqref{eq:cubic-scale-hierarchy-term}.
\end{proof}

\subsection{Refinements in the transition and saturated regimes}
\label{subsec:transition-saturation-refinements}

The Taylor hierarchy in \Cref{cor:cubic-hierarchy-asymptotics} gives finer phase approximations when $\delta\log t\to0$. In the transition and saturated regimes, replacing the exact clock by its limiting form likewise requires absolute accuracy. The sufficient conditions are stated explicitly below.

\begin{corollary}
	\label{cor:simplified-clock-asymptotics}
	Under the assumptions of \Cref{thm:exact-clock-trichotomy}, let $\Lambda\in(0,\infty)$. If
	\begin{equation}\label{eq:regime-two-stronger-clock-condition}
		\lambda_n-\Lambda=o(\delta_n),
	\end{equation}
	then
	\begin{equation}\label{eq:simplified-clock-regime-two}
		\begin{aligned}
			a_{\delta_n}(t_n)
			=&\exp\left\{-\ii\kappa \frac{1-\ee^{-\Lambda/2}}{\delta_n}\abs{W_0}^2\right\}W_0\\
			&\times\exp\Biggl\{-\ii\kappa (1-\ee^{-\Lambda/2})\Bigl[
			2\operatorname{Re}(\overline{Z_0}\dot Z_0)+\abs{Z_0}^2\left(\log\abs{Z_0}-\frac12\log2\right)
			\Bigr]\Biggr\}+o_{L_v^2}(1).
		\end{aligned}
	\end{equation}
	
	If
	\begin{equation}\label{eq:regime-three-stronger-clock-condition}
		\frac{\ee^{-\lambda_n/2}}{\delta_n}\longrightarrow0,
	\end{equation}
	then
	\begin{equation}\label{eq:simplified-clock-regime-three}
			a_{\delta_n}(t_n)
			=\exp\left\{-\ii\kappa \frac{\abs{W_0}^2}{\delta_n}\right\}W_0\exp\Biggl\{-\ii\kappa \Bigl[
			2\operatorname{Re}(\overline{Z_0}\dot Z_0)+\abs{Z_0}^2\left(\log\abs{Z_0}-\frac12\log2\right)
			\Bigr]\Biggr\}+o_{L_v^2}(1).
	\end{equation}
\end{corollary}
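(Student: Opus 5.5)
The plan is to start from the exact-clock amplitude formulas in \Cref{thm:exact-clock-trichotomy}, namely \eqref{eq:exact-clock-regime-two-amplitude} and \eqref{eq:exact-clock-regime-three-amplitude}. In each regime only the leading clock factor is changed; the finite correction factor is already in the required form. Write $F:=2\operatorname{Re}(\overline{Z_0}\dot Z_0)+\abs{Z_0}^2(\log\abs{Z_0}-\frac12\log2)$. This lies in $L^2$ but is not known to be bounded, so its exponential is kept unchanged as a unimodular multiplier. The tool is the elementary bound
\[
\bigl\|(\ee^{-\ii\kappa\alpha_n\abs{W_0}^2}-\ee^{-\ii\kappa\beta_n\abs{W_0}^2})W_0\ee^{-\ii\kappa\gamma F}\bigr\|_{L^2}
\leq\abs{\alpha_n-\beta_n}\,\norm{W_0}_{L^\infty}\snorm{\abs{W_0}^2}_{L^2},
\]
which uses $\abs{\ee^{\ii x}-\ee^{\ii y}}\leq\abs{x-y}$ and $W_0\in H^{3/4}\hookrightarrow L^\infty$. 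This is the same estimate used in the proof of \Cref{cor:cubic-hierarchy-asymptotics}. So each claim reduces to showing that a scalar clock difference tends to zero.

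For the transition regime, take $\alpha_n=(1-\ee^{-\lambda_n/2})/\delta_n$ and $\beta_n=(1-\ee^{-\Lambda/2})/\delta_n$. By the mean value theorem,
\[
\abs{\alpha_n-\beta_n}=\frac{\abs{\ee^{-\lambda_n/2}-\ee^{-\Lambda/2}}}{\delta_n}\leq\frac{\abs{\lambda_n-\Lambda}}{2\delta_n}.
\]
Here we use that the derivative of $\ee^{-s/2}$ is bounded by $1/2$ for $s\geq0$. Under \eqref{eq:regime-two-stronger-clock-condition} this tends to zero. Substituting into \eqref{eq:exact-clock-regime-two-amplitude} gives \eqref{eq:simplified-clock-regime-two}, with the correction factor $1-\ee^{-\Lambda/2}$ untouched.

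For the saturated regime, first note that \eqref{eq:regime-three-stronger-clock-condition} forces $\lambda_n\to\infty$, because $\ee^{-\lambda_n/2}\leq\ee^{-\lambda_n/2}/\delta_n$ once $\delta_n\leq1$. Hence \eqref{eq:exact-clock-regime-three-amplitude} applies. Take $\alpha_n=(1-\ee^{-\lambda_n/2})/\delta_n$ and $\beta_n=1/\delta_n$. Then $\abs{\alpha_n-\beta_n}=\ee^{-\lambda_n/2}/\delta_n\to0$, which is exactly \eqref{eq:saturated-clock-error} divided by two. The same bound then yields \eqref{eq:simplified-clock-regime-three}.

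There is no serious obstacle. The one point needing care is that the clock must be replaced with absolute $o(1)$ accuracy, not merely relative accuracy, since it is multiplied by the size-one function $\abs{W_0}^2$. This is why the hypotheses control $\lambda_n-\Lambda$ at scale $\delta_n$ and $\ee^{-\lambda_n/2}$ at scale $\delta_n$.
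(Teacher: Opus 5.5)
Your proof is correct and follows the paper's own argument. Like the paper, you start from the exact-clock formulas of \Cref{thm:exact-clock-trichotomy}, bound the transition-regime clock difference by the mean value theorem and the saturated one by the explicit defect $\ee^{-\lambda_n/2}/\delta_n$, and close with the Lipschitz phase bound using $W_0\in L^\infty$ and $\abs{W_0}^2\in L^2$. The one step you leave implicit, that $\lambda_n-\Lambda=o(\delta_n)$ forces $\lambda_n\to\Lambda$ so the transition formula applies, is immediate since $\delta_n\to0$.
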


\begin{proof}
	Condition \eqref{eq:regime-two-stronger-clock-condition} implies
	$\lambda_n\to\Lambda$. Since
	\[
	\frac{\dd}{\dd\lambda}\bigl[2(1-\ee^{-\lambda/2})\bigr]
	=\ee^{-\lambda/2},
	\]
	the mean-value theorem gives
	\[
	\frac{2(1-\ee^{-\lambda_n/2})-2(1-\ee^{-\Lambda/2})}{\delta_n}
	\longrightarrow0.
	\]
	Thus \eqref{eq:exact-clock-regime-two-amplitude} give \eqref{eq:simplified-clock-regime-two}.
	
	Condition \eqref{eq:regime-three-stronger-clock-condition} implies $\lambda_n\to\infty$. Since
	\begin{equation}\label{eq:saturated-clock-defect-section-six}
		P_{\delta_n}(t_n)-\frac2{\delta_n}
		=-\frac2{\delta_n}\ee^{-\lambda_n/2},
	\end{equation}
	\eqref{eq:exact-clock-regime-three-amplitude} gives \eqref{eq:simplified-clock-regime-three}.
\end{proof}

\begin{remark}
	\label{rem:finite-transition-clock-correction}
	Condition \eqref{eq:regime-two-stronger-clock-condition} is generically necessary for replacing the leading exact clock by $2(1-\ee^{-\Lambda/2})/\delta_n$ with absolute $o(1)$ phase error when $W_0$ is nontrivial. Indeed, if
	\[
	\frac{\lambda_n-\Lambda}{\delta_n}\longrightarrow\varsigma\in\R,
	\]
	then
	\begin{equation}\label{eq:finite-transition-clock-correction}
		\frac{(1-\ee^{-\lambda_n/2})-(1-\ee^{-\Lambda/2})}{\delta_n}
		\abs{W_0}^2
		\longrightarrow\frac{\varsigma}{2}\ee^{-\Lambda/2}\abs{W_0}^2
		\quad\text{in }L^2.
	\end{equation}
	Thus the finite correction
	$\frac{\varsigma}{2}\ee^{-\Lambda/2}\abs{W_0}^2$ remains.
\end{remark}

\begin{remark}
	\label{rem:finite-saturation-defect}
	Condition \eqref{eq:regime-three-stronger-clock-condition} is equivalent to
	\begin{equation}\label{eq:saturation-condition-equivalences}
		\frac{t_n^{-\delta_n/2}}{\delta_n}\longrightarrow0
		\qquad\text{and to}\qquad
		\lambda_n-2\log(1/\delta_n)\longrightarrow+\infty.
	\end{equation}
	It is necessary for the scalar clock itself to differ from $2/\delta_n$ by $o(1)$, and generically necessary for the corresponding phase replacement when $W_0$ is nontrivial. If instead
	\[
	\frac{\ee^{-\lambda_n/2}}{\delta_n}\longrightarrow\tau\in(0,\infty),
	\]
	then \eqref{eq:saturated-clock-defect-section-six} leaves the finite correction
	\begin{equation}\label{eq:finite-saturation-clock-correction}
		-\tau\abs{W_0}^2.
	\end{equation}
\end{remark}

For fixed $\delta>0$, the limiting nonlinear phase is $\kappa 2^{-\delta/2}\abs{W_\delta}^{2+\delta}/\delta$. The expansion \eqref{eq:coefficient-first-order-main} gives
\[
	\frac{\kappa}{\delta}2^{-\delta/2}\abs{W_\delta}^{2+\delta}
	=\kappa\Biggl[\frac{\abs{W_0}^2}{\delta}
	+2\operatorname{Re}(\overline{Z_0}\dot Z_0)
	+\abs{Z_0}^2\left(\log\abs{Z_0}-\frac12\log2\right)
	\Biggr]+o_{L_v^2}(1)
\]
as $\delta\downarrow0$. This is a first-order expansion, not an exact identity for fixed $\delta$.

\section{Proofs of the main theorems}
\label{sec:proof-main-theorem}

The common inputs ar \Cref{prop:uniform-asymptotics,prop:first-order-profile-expansion}. They provide $W_0,Z_0\in H^{3/4}$, $\dot Z_0\in L^2$, and the explicit real-valued $L^2$ coefficient in \eqref{eq:coefficient-first-order-main}. We choose $\varepsilon_0$ to satisfy the smallness
conditions in both propositions.

\begin{proof}[Proof of \Cref{thm:main-time-scale-hierarchy}]
	Under \eqref{eq:main-hierarchy-assumptions}, \Cref{cor:cubic-hierarchy-asymptotics} gives \eqref{eq:main-cubic-hierarchy-clock} and the truncated-clock asymptotic in $L_v^2$. The unitary self-similar map \eqref{eq:self-similar-amplitude-definition} then gives \eqref{eq:main-cubic-hierarchy-physical}.
	
	Under \eqref{eq:main-cubic-scale},
	\[
	\log t_n
	\sim
	c\,\delta_n^{-m/(m+1)}.
	\]
	The calculation in \Cref{cor:cubic-hierarchy-asymptotics} shows that \eqref{eq:main-hierarchy-assumptions} holds at this scale and that
	\[
	\delta_n^m(\log t_n)^{m+1}
	\longrightarrow
	c^{m+1}.
	\]
	Multiplying \eqref{eq:cubic-scale-hierarchy-term} by $\abs{W_0}^2/2\in L^2$ gives \eqref{eq:main-cubic-scale-contribution}.
\end{proof}

\begin{proof}[Proof of \Cref{thm:exact_clock}]
	For the same solutions and profiles,
	\Cref{prop:exact-clock-reduction} gives
	\[
		\begin{aligned}
			a_{\delta_n}(t_n)
			=&\exp\left\{-\ii\kappa \frac12P_{\delta_n}(t_n)\abs{W_0}^2\right\}W_0\\
			&\times\exp\Biggl\{-\ii\kappa (1-\ee^{-\lambda_n/2})\Bigl[
			2\operatorname{Re}(\overline{Z_0}\dot Z_0)
			+\abs{Z_0}^2\left(\log\abs{Z_0}-\frac12\log2\right)
			\Bigr]\Biggr\}+o_{L_v^2}(1).
		\end{aligned}
	\]
	The unitary self-similar map \eqref{eq:self-similar-amplitude-definition} gives \eqref{eq:main-exact-clock-physical}.
	
	The three phase replacements follow from \Cref{thm:exact-clock-trichotomy}. Their $o_{L_v^2}(1)$ errors are preserved by the same unitary map, which gives the corresponding $L_x^2$ asymptotics.
\end{proof}

\section*{Acknowledgment}
We thank Chulkwang Kwak for asking a question that motivated this work.
Y. Cho was supported by the research funds of Jeonbuk National University in 2026 and by the National Research Foundation of Korea (NRF) grant funded by the Korea government (MSIT) (RS-2024-00333393). 
J. Lee was partially supported by the Global-Learning \& Academic research institution for Master's-PhD students, and Postdocs (G-LAMP) Program of the National Research Foundation of Korea (NRF) grant funded by the Ministry of Education (No. RS-2025-25442355), the NRF grant funded by the Korea government (MSIT) (No. RS-2026-25589357), and a grant from Kyung Hee University in 2026 (KHU-20262263).

\bibliographystyle{alpha}
\bibliography{reference}

\end{document}